\DeclareSymbolFont{bbold}{U}{bbold}{m}{n}
\DeclareSymbolFontAlphabet{\mathbbold}{bbold}
\def\qmod#1#2{{\hbox{}^{\displaystyle{#1}}}\!\big/\!\hbox{}_{
\displaystyle{#2}}}
\def\resto#1#2{{
#1\hskip 0.4ex\vline_{\hskip 0.2ex\raisebox{-0,2ex}
{{${\scriptstyle #2}$}}}}}
\def\restobig#1#2{{
#1\hskip 0.4ex\vline_{\hskip 0.2ex\raisebox{-0,2ex}
{{${ #2}$}}}}}
\def\A{{\mathbb A}}
\def\B{{\mathbb B}}
\def\C{{\mathbb C}}
\def\E{{\mathbb E}}
\def\G{{\mathbb G}}
\def\H{{\mathbb H}}
\def\L{{\mathbb L}}
\def\N{{\mathbb N}}
\def\P{{\mathbb P}}
\def\Q{{\mathbb Q}}
\def\R{{\mathbb R}}
\def\Z{{\mathbb Z}}
\def\textmap#1{\mathop{\vbox{\ialign{
                                  ##\crcr
      ${\scriptstyle\hfil\;\;#1\;\;\hfil}$\crcr
      \noalign{\kern 1pt\nointerlineskip}
      \rightarrowfill\crcr}}\;}}
\def\bigtextmap#1{\mathop{\vbox{\ialign{
                                  ##\crcr
      ${\hfil\;\;#1\;\;\hfil}$\crcr
      \noalign{\kern 1pt\nointerlineskip}
      \rightarrowfill\crcr}}\;}}
\newcommand{\cal}{\mathcal}
\def\textlmap#1{\mathop{\vbox{\ialign{
                                  ##\crcr
      ${\scriptstyle\hfil\;\;#1\;\;\hfil}$\crcr
      \noalign{\kern-1pt\nointerlineskip}
      \leftarrowfill\crcr}}\;}}
\def\ag{{\mathfrak a}}
\def\cg{{\mathfrak c}}
\def\g{{\mathfrak g}}
\def\kg{{\mathfrak k}}
\def\mg{{\mathfrak m}}
\def\ng{{\mathfrak n}}
\def\pg{{\mathfrak p}}
\def\qg{{\mathfrak q}}
\def\tg{{\mathfrak t}}
\def\ug{{\mathfrak u}}
\def\Ag{{\mathfrak A}}
\def\Pg{{\mathfrak P}}
\newtheorem{sz}{Satz}[section]
\newtheorem{thry}[sz]{Theorem}
\newtheorem{pr}[sz]{Proposition}
\newtheorem{re}[sz]{Remark}
\newtheorem{co}[sz]{Corollary}
\newtheorem{dt}[sz]{Definition}
\newtheorem{lm}[sz]{Lemma}
\def\tr{\mathrm {Tr}}
\def\End{\mathrm {End}}
\def\Aut{\mathrm {Aut}}
\def\U{\mathrm{U}}
\def\SU{\mathrm {SU}}
\def\PU{\mathrm {PU}}
\def\GL{\mathrm {GL}}
\def\u{\mathrm {u}}
\def\Pic{\mathrm {Pic}}
\def\NS{\mathrm{NS}}
\def\deg{\mathrm {deg}}
\def\Hom{\mathrm{Hom}}
\def\Tors{\mathrm{Tors}}
\def\Vol{\mathrm{Vol}}
\def\vol{\mathrm{vol}}
\def\id{ \mathrm{id}}
\def\rk{\mathrm {rk}}
\def\ad{\mathrm {ad}}
\def\U2{\mathrm{U(2)}}
\def\U{\mathrm{U}}
\def\niq{=\kern-.18cm /\kern.08cm}
\def\Ad{\mathrm {Ad}}
\def\ad{{\rm ad}}
\def\ch{\mathrm{ch}}
\def\td{\mathrm{td}}
\def\HE{\mathrm{HE}}
\def\st{\mathrm{st}}
\def\ASD{\mathrm{ASD}}
\def\SL{\mathrm{SL}}
\begin{document}

\title[Kähler classes on universal moduli spaces and volumina]{Kähler classes on universal moduli spaces and volumina of Quot spaces}
\author{Christian Okonek \and  Andrei Teleman}

\begin{abstract}
We study the natural Kähler metrics on moduli spaces of stable oriented pairs in a very general framework, and we prove a universal formula expressing the Kähler class of such a moduli space  in terms of  characteristic classes of the universal bundle. We use these results to compute explicitly the volumina of certain Quot spaces.
\end{abstract}

\maketitle

\tableofcontents 

\section{Introduction}\label{Intro}

Let $(X,g)$ be compact Hermitian manifold and $E$ a ${\cal C}^\infty$ Hermitian  bundle on $X$. Recall that a Hermitian connection   $A$ on $E$ is called  Hermite-Einstein if its curvature $F_A$ is of type (1,1) and satisfies the Hermite-Einstein equation
\begin{equation}\label{HE}
i\Lambda_g F_A=c\id_E \ ,\
\end{equation}
for a constant $c\in\R$.
 
The classical version of the Kobayashi-Hitchin correspondence states that a holomorphic bundle ${\cal E}$ over a compact Gauduchon manifold $(X,g)$ is stable if and only if it admits a Hermitian metric $h$ such that the associated Chern connection $A_{{\cal E},h}$ is Hermite-Einstein and irreducible. This statement has first  been proved by Donaldson \cite{Do1} for projective-algebraic surfaces and for projective manifolds endowed with Hodge metrics \cite{Do2}, later  by Uhlenbeck and Yau for Kähler manifolds \cite{UY1}, \cite{UY2}, and finally by Li und Yau for arbitrary compact complex manifolds endowed with Gauduchon metrics (\cite{LY}, \cite{LT1}).

This statement, which gives a differential geometric characterization of stability, can be interpreted as an isomorphism between moduli spaces; more precisely, it yields a real analytic isomorphism
$${\cal M}^\HE(E)^*\textmap{KH} {\cal M}^\st(E)\ ,
$$ 
where   ${\cal M}^\HE(E)^*$ is the moduli space of irreducible  Hermite-Einstein connections on $E$, and ${\cal M}^\st(E)$ denotes the moduli space of stable holomorphic structures on $E$. This geometric interpretation of the original Kobayashi-Hitchin correspondence found important applications in differential topology: it allowed Donaldson to describe explicitly certain  instanton moduli spaces  on algebraic surfaces and to compute the first Donaldson invariants. A second fundamental application of the classical Kobayashi-Hitchin correspondence was Donaldson's non-vanishing theorem for his polynomial invariants on algebraic surfaces. This result was based on the fundamental fact that the Kähler class of the canonical metric of the moduli space can be identified with a tautological cohomology class \cite{DK}.  One of the motivations of this article is to show that this phenomenon holds in great generality.

Donaldson theory for $\PU(2)$-connections leads naturally to moduli spaces of projectively anti-seldual unitary connections  with fixed determinant  on Riemannian 4-manifolds (see \cite{Do3}).   More precisely, let $E$ be a  Hermitian vector bundle of rank 2 on  a Riemannian 4-manifold $(M,g)$,  $a\in{\cal A}(\det(E))$ a fixed Hermitian connection.     Denote by ${\cal A}_a^\ASD(E)$ the space of connections $A$ on $E$ inducing $a$ on $\det(E)$ which  satisfy the projective anti-selduality equation
\begin{equation}\label{HE0}
(F_A^0)^+=0\ .
\end{equation}
Here $F_A^0$ is the trace-free part of the curvature $F_A$, and $(\cdot)^+$ denotes the projection onto the self-dual summand $iA^2_+$. The moduli space which is relevant in Donaldson theory is the quotient
$${\cal M}^\ASD_a(E):=\qmod{{\cal A}_a^\ASD(E)}{\Gamma(M,\SU(E))}\ .
$$

On complex surfaces equation (\ref{HE0}) coincides with the projective Hermite-Einstein equation
$$i\Lambda_g F_A^0=0\ ,\ F_A^{0,2}=0\ ,
$$
which makes sense on arbitrary Hermitian complex manifolds and Hermitian bundles. In this case the fixed connection $a\in{\cal A}(\det(E))$ is chosen such that $F_a^{0,2}=0$, so that it defines a holomorphic structure ${\cal D}$ on the line bundle $\det(E)$. If the metric $g$ is Gauduchon one has again an isomorphism of moduli spaces
$${\cal M}^\HE_a(E)^*\textmap{KH_a} {\cal M}^\st_{\cal D}(E)\ ,
$$
where ${\cal M}^\HE_a(E)^*$ is the moduli space of irreducible projective  Hermite-Einstein connections  on $E$ inducing $a$ on $\det(E)$, modulo the real gauge group ${\cal G}:=\Gamma(X,\SU(E))$, and ${\cal M}^\st_{\cal D}(E)$ is the moduli space of stable holomorphic structures on $E$ inducing ${\cal D}$ on $\det(E)$, modulo the complex gauge group ${\cal G}^\C:=\Gamma(X,\SL(E))$.

The isomorphism $KH_a$ is an important refined version of the original Kobayashi-Hitchin correspondence $KH$. Indeed, on one hand one uses gauge groups associated with subgroups ($\SU(r)$, $\SL(r)$) of the structure groups, on the other hand one fixes some of the data ($a$, ${\cal D}$). The procedure of taking subgroups and fixing some of the data is called orientation \cite{LT2}.  

Another important generalization of the classical Kobayashi-Hitchin correspondence concerns moduli spaces of pairs consisting of (oriented) connections or holomorphic structures coupled with sections in associated bundles. For instance, in the theory of Hitchin pairs  \cite{Hi} one considers  a  differentiable Hermitian vector bundle $E$, a holomorphic structure  ${\cal D}$ on $\det(E)$, and a fixed  holomorphic bundle   ${\cal E}_0$  on $X$.  The complex geometric problem in this case is the classification of pairs $({\cal E},\varphi)$ consisting of a holomorphic structure ${\cal E}$ on $E$ inducing ${\cal D}$ on $\det(E)$, and a section $\varphi\in H^0({\cal E}nd({\cal E})\otimes {\cal E}_0)$, modulo the complex gauge group ${\cal G}^\C=\Gamma(X,\SL(E))$. In the classical case one takes ${\cal E}_0:=\Omega^1_X$  \cite{Hi}, \cite{Si}. 

Let $a$ be the Chern connection of the holomorphic structure ${\cal D}$ on the Hermitian line bundle $\det(E)$. The corresponding gauge theoretical problem concerns the moduli space of pairs $(A,\varphi)$ consisting of a Hermitian connection $A$ on $E$ inducing $a$ on $\det(E)$ and a section $\varphi\in A^0(\End(E)\otimes {\cal E}_0)$ such that the following Hermite-Einstein type equation is satisfied:

$$i\Lambda F_A^0+\frac{1}{2}[\Phi,\Phi^{*_h}]=0\ ,\ F_A^{0,2}=0$$
 Defining stability of Hitchin pairs in an appropriate way one obtains again a Kobayashi-Hitchin correspondence 
$${\cal M}^\HE_a(E,{\cal E}_0)\textmap{KH_{a,{\cal E}_0}} {\cal M}^\st_{\cal D}(E,{\cal E}_0) \ .
$$

Many other  important gauge theoretical problems fit into this concept, for instance:

\begin{enumerate}
 \item Moduli spaces of vortices \cite{Br}, \cite{GP}, \cite{HL}, \cite{Th}, 
\item Moduli spaces of oriented pairs \cite{OT1}, \cite{Te},
\item Moduli spaces of Witten triples \cite{W}, \cite{Bi}, \cite{Du}.

\end{enumerate}

All these Kobayashi-Hitchin type correspondences  can be interpreted, at least at a formal level, as infinite dimensional versions of the fundamental isomorphism between symplectic quotients (defined by a moment map) and GIT quotients (corresponding to a suitable stability condition) \cite{Ki}.

The idea of a universal Kobayashi-Hitchin correspondence, which specializes to all these isomorphisms of moduli spaces is therefore very natural and important.  General versions  have been obtained by Banfield \cite{Ban} for   connections in principal bundles over Kähler manifolds coupled with sections in associated vector bundles, and by Mundet i Riera \cite{Mu}  for connections in principal bundles over Kähler manifolds coupled with  sections in associated  Kählerian fibre bundles.   The universal Kobayashi-Hitchin correspondence of \cite{LT2} deals with (oriented) pairs   on Gauduchon manifolds, and identifies the complex geometric stability concept corresponding to the gauge theoretical equations. The final result is a universal isomorphism  $KH: {\cal M}^*\to {\cal M}^\st$  between a gauge theoretic moduli space ${\cal M}^*$ of irreducible (oriented) Hermite-Einstein pairs  and a complex geometric moduli space ${\cal M}^\st$ of stable (oriented) pairs.

 In this very general framework it is shown that the moduli space ${\cal M}^*$   comes with a natural metric, which is Hermitian with respect to the complex structure induced by the  Kobayashi-Hitchin isomorphism $KH$ and   is strongly  KT, i.e., its Kähler form $\Omega$ satisfies the equation $\partial\bar\partial \Omega=0$.
The idea of the proof  is to show that   $\Omega$ can be written as a sum  $\Omega=\Omega_1+\Omega_2$  of two terms, each term $\Omega_i$ being  obtained by fibre integration. More precisely
 $$\Omega_1=p_{{\cal M}^*}(\alpha_1\wedge p_X^*(\omega_g^{n-1}))\ ,\ \Omega_2=(n-1)!p_{{\cal M}^*}(\alpha_2\wedge p_X^*(\vol_g))\ ,$$
  where $\alpha_1$ and $\alpha_2$  are {\it closed} forms on ${\cal M}^*\times X$. Note that $\alpha_1$ is a characteristic form of the universal bundle $\P$ on ${\cal M}^*\times X$, hence the first term $\Omega_1$ is constructed in the same way as the de Rham representatives of the  Donaldson tautological classes \cite{DK}. 
 The first goal  of this article is to prove  that, under very general assumptions, the  closed form $\alpha_2$ is also a characteristic form  of $\P$ and to identify the corresponding  characteristic class explicitly. This is a highly non-trivial result whose proof is based on a push-forward formula in equivariant cohomology.   A similar, but less general result, has been obtained with different methods by Baptista \cite{Bap}.
 
 Therefore, assuming that $d(\omega_g^{n-1})=0$ (i.e.,   $g$  is semi-Kähler)  one obtains a canonical Kähler metric on ${\cal M}^*$ whose Kähler form is the  de Rham representative of  the sum of two Donaldson type classes. This result  can be regarded as a universal  generalization of formulae obtained by Manton–Nasir \cite{MN} and Perutz \cite{P} in the  case of vortices on Riemann surfaces.  
\vspace{4mm}

In order to formulate our main results, we recall briefly the formalism introduced in \cite{LT2}  in the special case of Kähler manifolds, which  suffices for our purposes.\\

Suppose that $(X,J,g)$ is an  $n$-dimensional  Kähler manifold and $\pi: P\to X$ is a   principal $  K$-bundle on $X$, with $ K$  a compact Lie group. We fix an epimorphism $r:  K\to  K_0$ of compact Lie groups, and denote by $N$  its kernel. Let $P_0:=  P\times_K   K_0$ be the associated $K_0$-bundle and fix a connection $A_0\in{\cal A}( P_0)$ of type $(1,1)$. 

Suppose we are given a left  $K$-action on a Kähler manifold $(F,J_F,g_F)$  by holomorphic isometries,  and  a moment map $\mu_F$ for this action on the corresponding symplectic manifold $(F,\omega_F)$.  We fix a $K$-invariant inner product $k$ on $\kg$ and we denote by $m_F:F\to  \kg$ the $ \kg$-valued map defined by $\mu_F=k(m_F,\cdot)$.  Let $p_\ng:\kg\to\ng$ be the $k$-orthogonal  projection onto the Lie algebra $\ng$ of $N$.  
The symmetric bilinear form  $h:\kg\times\kg\to\R$ defined by
$$h(x,y):=k(p_\ng (x),p_\ng(y))
$$
is $K$-invariant and non-degenerate on $\ng$.

Let   $E:=  P\times_{K} F$ be the associated bundle. The gauge group  
$${\cal G}:=\Gamma (X,  P\times_{K} N)$$
  acts  from the left on the space of $A_0$-oriented pairs ${\cal A}_{A_0}(  P)\times \Gamma(X,E)$. This space has a natural Kähler metric depending on the triple $(k,g,g_F)$ whose Kähler form is given by the formula (\cite{LT2} p. 66):
  $$\Omega((\alpha,\psi),(\beta,\chi):=\int_X h(\alpha\wedge\beta)\wedge\omega_g^{n-1}+(n-1)!\int_X \omega_F(\psi,\chi)\vol_g\ 
  $$

The fundamental object associated with these data is the moduli space  ${\cal M}^*$ of equivalence classes of irreducible oriented pairs $(A,\varphi)\in{\cal A}_{A_0}(P)\times \Gamma(X,E)$ satisfying the generalized  vortex  equations
$$\left\{
\begin{array}{ccc}
F_A^{02}&=&0\ \ \ \\\ 
\varphi  \hbox{ is }  A\hbox{-holomorphic }\\
p_{\ng}\left[  \Lambda_g  F_A+ m_F(\varphi)\right] &=&0\ .
\end{array}
\right. \eqno{(V)}
$$
In the third equation we used the same symbol   for the vector bundle epimorphism   $P\times_{ K}  \kg\to P\times_{  K}\ng$ induced by $p_\ng:\kg\to\ng$.

Denote by  $\omega_{\cal M^*}$ the Kähler form of this canonical Kähler metric on the moduli space  ${\cal M}^*$ of irreducible $A_0$-oriented pairs. Using a standard construction one defines a principal $K$-bundle $\P$ over ${\cal M}^*\times X$ and a tautological $K$-equivariant map $\Phi:\P\to F$, which can be regarded as a universal section in the associated $F$-bundle. Our main results are:

\newtheorem*{th-KformGen}{Theorem \ref{KformGen}} 
\begin{th-KformGen}
 The canonical Hermitian metric on ${\cal M}^*$  is Kähler and its Kähler form is
$$\omega_{\cal M^*}=p_{{\cal M}^*,*}\left[-\frac{1}{2}[ h(\Omega_\A\wedge\Omega_\A)]\wedge p_X^*(\omega_g^{n-1})+(n-1)!\eta\wedge p_X^*(\vol_g)\right]\ ,
$$
where $\Omega_{\A}$ denotes the curvature of the universal connection, and $\eta$ is the horizontal part of $ \Phi^*(\omega_F)-\langle \mu_F\circ\Phi,\Omega_\A\rangle$; $\eta$ is a closed 2-form. If $\mu_F$ can be written as $\mu_F=\mu_0+\tau$, where $\mu_0$ is an exact moment map and  $\tau\in   \kg^\vee$ is   $K$-invariant, then the de Rham cohomology class $[\omega_{\cal M^*}]_{\mathrm {DR}}$ is given by
$$[ \omega_{\cal M^*}]_{\mathrm {DR}}=p_{{\cal M}^*,*}\left[ -\frac{1}{2} [h(\Omega_\A\wedge \Omega_\A)] \cup  p_X^*[\omega_g]^{n-1}-(n-1)! [\tau(\Omega_\A)]\cup   p_X^*[\vol_g]\right] \ . 
$$
\end{th-KformGen}

 For the linear case we obtain a more precise result. If the chosen unitary representation
 $\rho: K\to \U(F)$ satisfies a technical condition (similar to a condition in \cite{Bap}), then one obtains an equality between forms (not just de Rham classes).  

\newtheorem*{th-Kform}{Theorem \ref{Kform}}
\begin{th-Kform}
Let  $F$ be a Hermitian vector space, and  $\rho: K\to \U(F)$ a unitary representation.  For a $ K$-invariant linear form $\tau\in  \kg^\vee$ put  $\mu_F:=\mu_0+\tau$, where $\mu_0$ is the standard moment map for the $  K$-action on $F$.

Suppose  there exists a $K$-invariant element $a_0\in \ng$ such that $\rho_*(a_0)=i \id_F$. Then the Kähler form $\omega_{\cal M^*}$ is given by

$$ \omega_{\cal M^*}=p_{{\cal M}^*,*}\left[ -\frac{1}{2}  h(\Omega_\A\wedge \Omega_\A)  \wedge  p_X^*(\omega_g)^{n-1}-(n-1)!  \tau(\Omega_\A) \wedge   p_X^*(\vol_g)\right] \ ,
$$ 
hence it  coincides with  fiber integration of a characteristic form  of the connection $\A$ on the universal bundle $\P$.  

\end{th-Kform}

The second goal of our article is to use these theoretical results to compute explicitly the volumina  of certain interesting moduli spaces with respect to the canonical metric induced by the Kobayashi-Hitchin correspondence.  


The computation of these volumina is not only an interesting mathematical problem, but is also important from a physical point of view. The first computation of such volumina is due to Manton and Nasir \cite{MN}. These authors  consider  the  vortex equation with fixed parameter $t=\frac{1}{2}$ on a   Riemann surface $(X,g)$ of genus $\g$. Assuming $\Vol_g(X)>4\pi d$ the symmetric power $X^{(d)}$ can be identified with the moduli spaces of vortices of degree $d$. Their main result    is   the computation of the volumina of   $X^{(d)}$ with respect to the induced metric (see \cite{MN}, (3.22)):
$$\Vol_{\omega}(X^{(d)})=\sum_{i=0}^{\min(d,g)} (4\pi)^i\left(\begin{matrix}\g\\ i\end{matrix}\right) \frac{1}{(d-i)!}(\Vol_g(X)-4\pi d)^{d-i}\ .
$$

A partial generalization of this result is due to Baptista, who computed the volumina of the moduli spaces of semi-local  Abelian vortices when the degree $d$ is larger than $2\g-2$ (see \cite{Bap} (52)).

Both formulae are (up to normalization factors) special cases of the formulae which we prove in this article (see section \ref{Applications}):\\

Let $X$ be a compact complex manifold of dimension $n$, ${\cal E}_0$ a locally free sheaf of rank $r_0$ on $X$,  and $E$ a differentiable vector bundle on $X$. We denote by ${\cal Q}uot^E_{{\cal E}_0}$ the Quot space of equivalence classes of quotients $q:{\cal E}_0\to {\cal Q}$ with locally free kernel of differentiable type $E$. When $E$ is of rank 1 and $m:=c_1(E)\in\NS(X)$ we put ${\cal Q}uot^m_{{\cal E}_0}={\cal Q}uot^E_{{\cal E}_0}$.

We fix a Hermitian metric $h_0$ on ${\cal E}_0$ and  denote by $A_0$ the associated Chern connection.
Let $g$  be a Kähler metric on $X$, $t\in\R$ a parameter and put $\tg:=\frac{(n-1)!\Vol_g(X)}{2\pi}t$. The moduli space ${\cal M}_t^*(E,A_0)$ of irreducible $t$-vortices of type $(E,A_0)$ can be identified via the Kobayashi-Hitchin correspondence with the moduli space ${\cal M}^\st_\tg(E,{\cal E}_0)$ of $\tg$-stable pairs of type $(E,{\cal E}_0)$, and for sufficiently large $\tg$ the latter can be identified with the Quot space ${\cal Q}uot^E_{{\cal E}_0}$ \cite{OT2}. In this way we obtain   natural metrics on  ${\cal Q}uot^E_{{\cal E}_0}$.

Note that our moduli spaces come with a natural symmetry, since any  morphism of complex Lie groupes $G\to\Aut({\cal E}_0)$ induces a natural holomorphic $G$-action on the moduli spaces  ${\cal M}^\st_\tg(E,{\cal E}_0)$ and ${\cal Q}uot^E_{{\cal E}_0}$.    Our  formalism yields  a natural $G$-equivariant lift 
$$[\omega_t]^G\in H^2_G({\cal M}^\st_\tg(E,{\cal E}_0),\R)$$
 of the Kähler class $[\omega_t]$ in terms of equivariant Chern classes of the universal bundle.%
\newtheorem*{equivariant-omega-intro}{Theorem \ref{Symm}}
\begin{equivariant-omega-intro}

Let $\mathscr{E}$ be the universal bundle on ${\cal M}^\st_{\tg} (E,{\cal E}_0)\times X$. For any morphism of Lie  groups $G\to \Aut({\cal E}_0)$, the class
$$ \left\{-4\pi^2\mathrm{ch}_2^G(\mathscr{E})  \cup  p_X^*[\omega_g^{n-1}]-2t\pi(n-1)!  c_1^G(\mathscr{E}) \cup   p_X^*[\vol_g]\right\}/[X] 
$$
is a lift of  the Kähler class $[\omega_{t}]$ to $H^2_G({\cal M}^\st_{\tg} (E,{\cal E}_0),\R)$.

\end{equivariant-omega-intro}

This general result is very useful  for explicit computations of  volumina  of moduli spaces using   localization methods.
\\

In the case when $E$ is a line bundle with $c_1(E)=m$ we have a natural  identification  ${\cal M}^\st_\tg(E,{\cal E}_0)= {\cal Q}uot^E_{{\cal E}_0}$ for  any $\tg> -\deg(E)$, and
\begin{equation}\label{intro-Perutz-new}\frac{1}{4\pi^2}[\omega_{{\cal M}^*_t}]=\theta+(\deg(E)+\tg)\gamma\ ,
\end{equation}
\def\st{\mathrm{st}}
where $\theta$ is the pull-pack of the theta class of $\Pic^{m}(X)$, and $\gamma=-c_1(\resto{\mathscr{E}}{{\cal Q}uot^E_{{\cal E}_0}\times\{x_0\}})$.

Formula (\ref{intro-Perutz-new}) specializes to a result obtained by Perutz for $r_0=1$ and $n=1$ (see \cite{P} Theorem 3 ).

 When $X$ is a Riemann surface and $r_0=1$, the Quot space ${\cal Q}uot^m_{{\cal E}_0}$ can be identified with the symmetric power $X^{(d)}$, $d:=\deg({\cal E}_0)-\deg(E)$. Using known identities for the intersection numbers $\langle\theta^i\gamma^j,[X^{(d)}]\rangle$ we obtain 
$$\Vol_{\omega_t}({\cal Q}uot^E_{{\cal E}_0})=(4\pi^2)^d\sum_{i=1}^{\min(d,\g)}\left(\begin{matrix} \g\\ i\end{matrix}\right)\frac{1}{(d-i)!}(\deg(E)+\tg)^{d-i}\ ,
$$
which (up to the normalization factor $\pi^2$) specializes to the Manton-Nasir formula.

Our second application concerns the volume of the Abelian Quot spaces ${\cal Q}uot^E_{{\cal E}_0}$ for arbitrary $r_0\geq 1$, $n\geq 1$. We will say that a pair $(m,{\cal E}_0)$ acyclic if $H^i({\cal L}^\vee\otimes{\cal E}_0)=0$ for all $i>0$ and all $[{\cal L}]\in\Pic^m(X)$. If this is the case then  ${\cal Q}uot^E_{{\cal E}_0}$ is a projective bundle over $\Pic^m(X)$, and we obtain an explicit formula for its volume:
\begin{equation}\label{VolAcyc} \Vol_{\omega_t}({\cal Q}uot^E_{{\cal E}_0})=\frac{(4\pi^2)^N}{N!} \bigg\{\bigg(\sum_{k=R-1}^N  \left(\begin{matrix}N\\ k\end{matrix}\right)  (\deg_g(E)+\tg)^{N-k}    \theta^k \bigg)\wedge \ \ \ \ \ \ \ \ \ \ \ \ \ \ \ \ \ \ \ \ \ \ \ $$
$$\ \ \ \ \ \ \ \ \ \ \ \ \ \ \ \ \ \ \ \ \ \ \ \ \ \ \ \ \ \ \wedge\exp\bigg({\sum_{i=1}^q \sum_{s=0}^{n-i} \frac{(-1)^s}{i s!} \kg_{m^s C_{n-i-s}}  }\bigg)\bigg\}(h_1,\dots,h_{2q}) \ .\end{equation}
In this formula $(h_1,\dots,h_{2q})$ is a basis of $H^1(X,\Z)$ compatible with its natural orientation,  $R:=\chi({\cal L}^\vee\otimes{\cal E}_0)$, and $N:=R+q(X)-1$ is the dimension of ${\cal Q}uot^E_{{\cal E}_0}$. The classes $C_i\in H^{2i}(X,\Q)$ are defined by the identity $\ch({\cal E}_0)\td(X)=\sum_i C_i$, and  for a class $\cg\in H^{2n-k}(X,\Q)$ we denote  by $\kappa_\cg\in\mathrm{Alt}^k(H^1(X,\Z),\Q)$  the form defined by
$$\kg_\cg(x_1,\dots,x_k):=\langle x_1\cup\dots \cup x_k\cup\cg, [X]\rangle\ .
$$

In the special case when $n=1$ and ${\cal E}_0={\cal O}_X^{\oplus r_0}$, formula (\ref{VolAcyc}) specializes to Baptista's result mentioned above.

The last section concerns the computation of the volumina of non-Abelian  Quot spaces ${\cal Q}uot^E_{{\cal E}_0}$ on Riemann surfaces in the case $r=r_0$. These moduli spaces are always smooth and projective, but their cohomology algebra is unknown. Since for ${\cal E}_0={\cal O}_X^{\oplus r_0}$ one obtains precisely Weil's moduli spaces of matrix divisors,  we call them  moduli spaces of {\it twisted matrix divisors}. For   fixed data $(E,t,g)$ the volume of ${\cal Q}uot^E_{{\cal E}_0}$ depends only on the topological type of ${\cal E}_0$, hence we can suppose that ${\cal E}_0=\bigoplus_{i=1}^{r_0} {\cal L}_i$ splits  as a direct sum of holomorphic line bundles  ${\cal L}_i$. In this case one obtains a natural action of $[\C^*]^{r_0}$ on ${\cal Q}uot^E_{{\cal E}_0}$ and the computation of the volume of this moduli space can be reduced  -- using localization techniques -- to an integration over the fixed point locus of this action. 
For our computations we will use the $\C^*$-action on ${\cal Q}uot^E_{{\cal E}_0}$ induced by the morphism $\C^*\to [\C^*]^{r_0}$ associated with a system $(w_1,\dots,w_{r})$ of pairwise distinct weights. With this choice we obtain
$$[{\cal Q}uot^E_{{\cal E}_0}]^{\C^*}=\coprod_{\underline{d}\in I(d)}\prod_{i=1}^{r}{\cal Q}uot^{l_i-d_i}_{{\cal L}_i}\ ,
$$
where $d:=\deg({\cal E}_0)-\deg({\cal E})$,   $l_i:=\deg({\cal L}_i)$ and
$$I(d):=\{\underline{d}=(d_1,\dots,d_r)\in\N^r|\ \sum_{i=1}^r d_i=d\}\ .
$$
  Using Theorem \ref{Symm} we obtain an explicit expression for the   the equivariant restrictions
$$\restobig{[\omega_t]^{\C^*}}{\prod_{i=1}^{r}{\cal Q}uot^{l_i-d_i}_{{\cal L}_i}}\ .
$$
In order to apply the localization formula we also need the equivariant Euler class of the normal bundles of the components $\prod_{i=1}^{r}{\cal Q}uot^{l_i-d_i}_{{\cal L}_i}$ of $[{\cal Q}uot^E_{{\cal E}_0}]^{\C^*}$ in ${\cal Q}uot^E_{{\cal E}_0}$. For this purpose   we follow closely similar computations of Marian and Oprea \cite{MO2}. 

We obtain a complicated   general formula for   $\Vol_{\omega_t}({\cal Q}uot^E_{{\cal E}_0})$, which is non-explicit because it implicitly involves the coefficients of the Taylor expansion of a transcendental function in $2r$ variables.  However, it  gives an algorithm which produces closed formulae for small $r$, $d$.   Deriving a general closed formula in terms of the topological data $(\g,d,l,r)$ and $\Vol_g(X)$ is an interesting  combinatorial problem.

Note that the computation of the volume of ${\cal Q}uot^E_{{\cal E}_0}$ gives a
  purely algebraic geometric result: The moduli spaces ${\cal Q}uot^E_{{\cal E}_0}$ of twisted matrix divisors come with natural Grothendieck embeddings
$$j_n:{\cal Q}uot^E_{{\cal E}_0}\hookrightarrow \P(V_n)
$$
in projective spaces (for $n\gg 0$). The Kähler class  $c_1(j_n^*({\cal O}_{V_n}(1)))$ can be identified with a multiple  of our Kähler class $[\omega_{t}]$ for a suitable choice of the parameter $t$, more precisely
$$c_1(j_n^*({\cal O}_{V_n}(1)))=\frac{1}{4\pi^2} \bigg[\omega_{\frac{2\pi(n-\g)}{\Vol_g(X)}} \bigg]\ .
$$

Therefore our computation of $\Vol_{\omega_t}({\cal Q}uot^E_{{\cal E}_0})$ yields in particular the degrees of the images of the Quot spaces ${\cal Q}uot^E_{{\cal E}_0}$ under the Grothendieck embeddings.

We illustrate all this with explicit formulae in the case $r=2$ and $d\in\{1,2\}$.

\section{General theory}

\subsection{Donaldson's quotient connection} \label{quotConn}

Suppose we have a $G$-equivariant principal $K$-bundle $\pi:P\to M$, where $G$ is a Lie group acting {\it freely} on $M$. By definition this means that $G$ acts on $P$ by $K$-bundle automorphisms covering the action on $M$.   Suppose that the quotient $\bar M:=M/G$ has a manifold structure such that the projection $q:M\to \bar M$ is a submersion. This implies that $q$ is a principal $G$-bundle over $\bar M$. 

In this situation  the quotient $\bar P:=P/G$ has a natural structure of a principal  $K$-bundle over $\bar M$. This bundle $\bar \pi:\bar P \to \bar M$ will be called the $G$-quotient of $\pi$.

Let $A$ be a $G$-invariant connection on $P$. The aim of this appendix is to construct a quotient connection $\bar A$ on $\bar P$ and to compute its curvature. The construction depends on the choice of an auxiliary     connection $\Gamma$   on the $G$-bundle $q:M\to \bar M$, i.e., a $G$-invariant  distribution on $M$ which is horizontal with respect to the projection $q:M\to \bar M$.\footnote{Note that $G$ acts on $M$ from the left  so, using the standard terminology of \cite{KN}, $\Gamma$ is a connection in the principal $G$-bundle obtained by endowing $M$ with the right action $ m\cdot g:= g^{-1} m$.}  

  The construction of this quotient connection follows the method explained in  \cite{DK} for linear connections, and later in the general framework of the {\it universal Kobayashi-Hitchin correspondence} in \cite{LT2}. We believe that  the general construction of the quotient connection and the computation of its curvature in the general framework of quotient of principal bundles is of independent interest, so we explain it briefly here. 

We denote by $\theta_A\in A^1(P,\kg)$ the connection form of $A$ and by  $\Gamma^A$ the horizontal distribution associated with $A$. The intersection $(\pi_*)^{-1}(\Gamma)\cap \Gamma^A$
defines  a  distribution on $P$ which is invariant with respect to both the $K$-action and the $G$-action, and is horizontal with respect to the projection $P\to \bar M$.  Let $\qg:P\to\bar P$ be the natural projection covering $q:M\to\bar M$.
\begin{dt} The quotient  connection $\bar A$ on the bundle $\bar\pi:\bar P \to 
\bar M$ is the connection associated with the distribution $\Gamma^{\bar A}:=\qg_*(\pi^{-1}_*(\Gamma)\cap \Gamma^A)$.
\end{dt}

The curvature of $\bar A$ can be computed as follows: Let $\bar \xi$, $\bar \eta$ be vector fields on $\bar M$ and let $\xi$, $\eta$ be their $\Gamma$-horizontal lifts on $M$. The Lie bracket $[\xi,\eta]$ can be decomposed as the sum $[\xi,\eta]^h+[\xi,\eta]^v$ of its $\Gamma$-horizontal and $\Gamma$-vertical components, and one has
$$[\xi,\eta]^v= \Omega_\Gamma(\xi,\eta)^\#_M\ .$$
 In this formula we use  the notation $a^\#_M$ for the   vector field on $M$ corresponding to an element $a\in\g$ via the given (left) action, whereas   $\Omega_\Gamma$ is the curvature of the connection defined by $\Gamma$ in the $G$-principal bundle $q:M\to \bar M$ constructed using the {\it right} action corresponding to the original left action on $M$. This explains why there is no $-$ sign in front of the right hand side.

Let $\tilde \xi$, $\tilde \eta$ be the $A$-horizontal lifts of $\xi$, $\eta$ to $P$. These vector fields are obviously sections of $\pi_*^{-1}(\Gamma)\cap \Gamma^A$. Let $m\in M$ and let $p\in P_m$ be a lift of $m$ in $P$. The $A$-horizontal and $A$-vertical  vertical components of the bracket $[\tilde \xi,\tilde \eta]$ in $p$ are:
\begin{equation}\label{dec}[\tilde \xi,\tilde \eta]^h_p= {[\xi,\eta]}^\sim_p=\{[\xi,\eta]^h_m\}^\sim_p+\{[\xi,\eta]^v_m\}^\sim_p=\{[\xi,\eta]^h_m\}^\sim_p+\{\Omega_\Gamma(\xi_m,\eta_m)^\#_{M,m}\}^\sim_p\ , $$
$$  
 [\tilde \xi,\tilde \eta]^v_p=-\Omega_A(\tilde \xi_p,\tilde \eta_p)^\#_{P,p}\ .
\end{equation}

On the other hand, since $\pi:P\to M$ is a morphism of left $G$-manifolds, it follows that for every $\alpha\in\g$ one has $\pi_*(\alpha^\#_{P,p})=\alpha^\#_{M,m} $, so that the $A$-horizontal component of $\alpha^\#_{P,p}$ is $\{\alpha^\#_{M,m}\}^\sim_p $. The $A$-vertical component of $\alpha^\#_{P,p}$ is $\{\theta_A(\alpha^\#_{P,p})\}^\#_{P,p}$  by  definition of the connection form $\theta_A$.   Therefore
$$\{\alpha^\#_{M,m}\}^\sim_p=\alpha^\#_{P,p}-\{\theta_A(\alpha^\#_{P,p})\}^\#_{P,p}
\ .
$$
Applying this formula to  $\alpha=\Omega_\Gamma(\xi_m,\eta_m)$ and using (\ref{dec}) we get
$$[\tilde \xi,\tilde \eta]_p=[\tilde \xi,\tilde \eta]^h_p+[\tilde \xi,\tilde \eta]^v_p $$
$$=\{[\xi,\eta]^h_m\}^\sim_p+\Omega_\Gamma(\xi_m,\eta_m)^\#_{P,p}-\{\theta_A(\Omega_\Gamma(\xi_m,\eta_m)^\#_{P,p})\}^\#_{P,p}-\Omega_A(\tilde \xi_p,\tilde \eta_p)^\#_{P,p}\ .
$$

Now we take the image of both sides of this equality  via the differential  of the projection $\qg:P\to \bar P$ at $p$. Put $\bar p:=\qg(p)$, and let $\tilde{\bar\xi}$, $\tilde{\bar\eta}$ be the $\bar A$-horizontal lifts of ${\bar\xi}$ and ${\bar\eta}$. One easily sees that the push-forwards $\qg_*(\tilde \xi)$ and $\qg_*(\tilde \eta)$ are defined and equal to $\tilde{\bar\xi}$ and  $\tilde{\bar\eta}$ respectively. From this one shows  that $\qg_*[\tilde \xi,\tilde \eta]=[\tilde{\bar\xi},\tilde{\bar\eta}]$. On the other hand   $\qg_*\left(\Omega_\Gamma(\xi_m,\eta_m)^\#_{P,p}\right)=0$, because the vector field $\Omega_\Gamma(\xi_m,\eta_m)^\#_P$ is tangent to the $G$-orbits. Finally   $\qg_*\left(\{[\xi,\eta]^h_m\}^\sim_p\right)$ is $\bar A$-horizontal, because $\{[\xi,\eta]^h_m\}^\sim_p$ belongs to $\pi^{-1}_*(\Gamma)\cap \Gamma^A$.  Therefore
$$[\tilde{\bar\xi},\tilde{\bar\eta}]_{\hat p}=\qg_*\left\{ \{[\xi,\eta]^h_m\}^\sim_p\right\}-\qg_*\left\{ \left\{ \theta_A(\Omega_\Gamma(\xi_m,\eta_m)^\#_{P,p}) +\Omega_A(\tilde \xi_p,\tilde \eta_p)\right\}^\#_{P,p}\right\}\ .
$$
Since $\qg$ is $K$-equivariant we have 
$$\qg_*\left\{ \left\{ \theta_A(\Omega_\Gamma(\xi_m,\eta_m)^\#_{P,p}) +\Omega_A(\tilde \xi_p,\tilde \eta_p)\right\}^\#_{P,p}\right\}= \left\{ \theta_A(\Omega_\Gamma(\xi_m,\eta_m)^\#_{P,p}) +\Omega_A(\tilde \xi_p,\tilde \eta_p)\right\}^\#_{\bar P,\bar p}\ .$$
But $\qg_*\left\{ \{[\xi,\eta]^h_m\}^\sim_p\right\}$ is $\bar A$-horizontal and  $\Omega_{\bar A}(\tilde{\bar \xi}_{\bar p},\tilde{\bar\eta}_{\bar p})^\#=-[\tilde{\bar\xi},\tilde{\bar\eta}]^v$. Hence we obtain
\begin{equation}\label{curvature}\left\{ \Omega_{\bar A}(\tilde{\bar\xi}_{{\bar p}},\tilde{\bar \eta}_{\bar p})\right\}^\#_{\bar P,\bar p}=\left\{\Omega_A(\tilde \xi_p,\tilde \eta_p)+\theta_A(\Omega_\Gamma(\xi_m,\eta_m)^\#_{P,p})\right\}^\#_{\bar P,\bar p}\ .
\end{equation}
This proves
\begin{pr}\label{CurvProp} Let $m\in M$, $p\in P_m$, $\bar m:=q(m)$, $\bar \xi$, $\bar \eta\in T_{\bar m}\bar M$, $\xi$, $\eta\in T_m M$ their $\Gamma$-horizontal lifts, $\tilde \xi$, $\tilde  \eta\in T_pP$ the  $A$-horizontal lifts of $\xi$, $\eta$, and $\tilde {\bar \xi}$, $\tilde{\bar \eta}\in T_{\bar p}\bar P$  the $\bar A$-horizontal lifts of $ \bar \xi$, $\bar  \eta$ at the point $\bar p:=\qg(p)$. The curvature of the quotient connection $\bar A$ is given by the formula
\begin{equation}\label{curv-formula} \Omega_{\bar A}(\tilde{\bar\xi},\tilde{\bar \eta} )=\Omega_A(\tilde \xi ,\tilde \eta )+\theta_A(\Omega_\Gamma(\xi,\eta)^\#_{P,p})\ .
\end{equation}
\end{pr}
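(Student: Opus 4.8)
The plan is to compute the curvature of $\bar A$ through the standard characterization of curvature via vertical brackets: for any principal connection $B$ with horizontal distribution $\Gamma^B$ and connection form $\theta_B$, and for horizontal vector fields $X,Y$, one has $\Omega_B(X,Y)^\#=-[X,Y]^v$, the superscript $v$ denoting the component vertical with respect to $\Gamma^B$. It therefore suffices to identify the $\bar A$-vertical part of the Lie bracket $[\tilde{\bar\xi},\tilde{\bar\eta}]$ of the $\bar A$-horizontal lifts on $\bar P$. Since $\Gamma^{\bar A}=\qg_*(\pi_*^{-1}(\Gamma)\cap\Gamma^A)$ by definition, the fields $\tilde{\bar\xi},\tilde{\bar\eta}$ are the $\qg$-pushforwards of the $A$-horizontal lifts $\tilde\xi,\tilde\eta$; as these are projectable (being genuine lifts of $\bar\xi,\bar\eta$), the brackets are $\qg$-related, so $\qg_*[\tilde\xi,\tilde\eta]=[\tilde{\bar\xi},\tilde{\bar\eta}]$, and the whole problem reduces to decomposing $[\tilde\xi,\tilde\eta]$ on $P$ and then pushing it down.

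First I would decompose the bracket on $P$. Since $\tilde\xi,\tilde\eta$ are $\pi$-related to $\xi,\eta$, we have $\pi_*[\tilde\xi,\tilde\eta]=[\xi,\eta]$, and splitting $[\xi,\eta]$ into its $\Gamma$-horizontal and $\Gamma$-vertical parts gives $[\xi,\eta]^v=\Omega_\Gamma(\xi,\eta)^\#_M$ --- the convention using the induced \emph{right} $G$-action being exactly what fixes the sign here. Lifting back to $P$, the $A$-horizontal component of $[\tilde\xi,\tilde\eta]$ is the $A$-horizontal lift $\{[\xi,\eta]\}^\sim$, while its $A$-vertical component is $-\Omega_A(\tilde\xi,\tilde\eta)^\#_P$. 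The essential input is the identity expressing the $A$-horizontal lift of a $G$-fundamental field: since $\pi$ is $G$-equivariant one has $\pi_*(\alpha^\#_P)=\alpha^\#_M$, so subtracting the $A$-vertical part yields $\{\alpha^\#_M\}^\sim=\alpha^\#_P-\{\theta_A(\alpha^\#_P)\}^\#_P$. Applying this to $\alpha=\Omega_\Gamma(\xi,\eta)$ rewrites $[\tilde\xi,\tilde\eta]$ as a sum of four explicit terms: the horizontal lift $\{[\xi,\eta]^h\}^\sim$, the $G$-fundamental field $\Omega_\Gamma(\xi,\eta)^\#_P$, and two $K$-fundamental fields with coefficients $\theta_A(\Omega_\Gamma(\xi,\eta)^\#_P)$ and $\Omega_A(\tilde\xi,\tilde\eta)$.

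I would then push this expression down by $\qg_*$ and read off the $\bar A$-vertical part. Three simplifications occur: $\qg_*$ annihilates the $G$-fundamental field $\Omega_\Gamma(\xi,\eta)^\#_P$, since it is tangent to the $G$-orbits collapsed by $\qg$; the term $\{[\xi,\eta]^h\}^\sim$ lies in $\pi_*^{-1}(\Gamma)\cap\Gamma^A$ and hence maps into $\Gamma^{\bar A}$, contributing only to the horizontal part; and the two $K$-fundamental fields push forward by $K$-equivariance of $\qg$ to $K$-fundamental fields on $\bar P$ with the same coefficients. Equating the $\bar A$-vertical part of $[\tilde{\bar\xi},\tilde{\bar\eta}]$ with $-\Omega_{\bar A}(\tilde{\bar\xi},\tilde{\bar\eta})^\#$ and cancelling the injective map $a\mapsto a^\#_{\bar P}$ (injective because the $K$-action on $\bar P$ is free) delivers $\Omega_{\bar A}(\tilde{\bar\xi},\tilde{\bar\eta})=\Omega_A(\tilde\xi,\tilde\eta)+\theta_A(\Omega_\Gamma(\xi,\eta)^\#_P)$.

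The main obstacle is bookkeeping rather than conceptual: one must track two distinct structure groups ($G$ acting on the base-level bundle, $K$ on the principal bundles) and correctly keep the $\Gamma$-horizontal/vertical splitting on $M$ separate from the $A$-horizontal/vertical splitting on $P$. The most delicate point is the sign in $[\xi,\eta]^v=\Omega_\Gamma(\xi,\eta)^\#_M$, which hinges on the left-versus-right action convention; a second point requiring care is the verification that the chosen horizontal lifts are genuinely projectable, so that the identity $\qg_*[\tilde\xi,\tilde\eta]=[\tilde{\bar\xi},\tilde{\bar\eta}]$ is legitimate.
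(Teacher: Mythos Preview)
Your proposal is correct and follows essentially the same route as the paper: decompose $[\tilde\xi,\tilde\eta]$ on $P$ using the identity $\{\alpha^\#_M\}^\sim=\alpha^\#_P-\{\theta_A(\alpha^\#_P)\}^\#_P$ applied to $\alpha=\Omega_\Gamma(\xi,\eta)$, then push forward by $\qg_*$ and read off the $\bar A$-vertical part. The only addition is your explicit remark that injectivity of $a\mapsto a^\#_{\bar P,\bar p}$ (from freeness of the $K$-action) is what allows one to cancel the $(\cdot)^\#$ in the final step, which the paper leaves implicit.
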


\subsection{A pushforward formula in equivariant   cohomology}\label{Cartan}

Let $K$ be a compact Lie group, and $F$ a differentiable manifold endowed with a left $K$-action. The Cartan algebra of the $K$-manifold $F$ (see \cite{GGK} p. 198) is  the differential graded  $\R$-algebra $A_K^*(F):=\big[\R[\kg]\otimes A^*(F)\big]^K$ of $K$-invariant $A^*(F)$-valued polynomials on the Lie algebra $\kg$;  the grading given by $\deg(f\otimes \eta):=2\deg(f)+\deg(\eta)$, and the differential is 
\begin{equation}\label{dK-left}d_K(\alpha)(a):=d(\alpha(a))+\iota_{a^\#}(\alpha(a))\  ,   
\end{equation}
for any $\alpha\in \R[\kg]\otimes A^*(F)$ regarded as a polynomial map $\kg\to A^*(F)$.  Here $a\in\kg$ and $a^\#$ stands for the vector field on $F$ associated with $a$. The cohomology $H_K^*(F)$ of this algebra is canonically isomorphic to the real equivariant cohomology $H^*_K(F,\R)$.  A  differentiable equivariant  map $F\to F'$ defines a morphism of differential graded  $\R$-algebras $\varphi^*_K:A^*_K(F')\to A^*_K(F)$ (given by pullback of forms), and the assignment $F\to A_K^*(F)$   defines a contravariant functor on the category of $K$-manifolds with values in the category of differential graded  $\R$-algebras.

In order to extend the functoriality of the assignment $F\to A_K^*(F)$  to right $K$-manifolds and  equivariant maps between a {\it right} and a left $K$-manifold,  one introduces  the Cartan algebra associated with a right $K$-manifold $P$ by endowing the  graded $\R$-algebra $A_K^*(P):=\big[\R[\kg]\otimes A^*(P)\big]^K$ with the differential 
\begin{equation}\label{dK-right}d_K(\alpha)(a):=d(\alpha(a))-\iota_{a^\#}(\alpha(a))\ .    
\end{equation}

With this definition, any differentiable equivariant map $\varphi:P\to F$ from a right $K$-manifold to a left $K$-manifold (i.e. any differentiable map $\varphi:P\to F$ satisfying the identity $\varphi(pk)=k^{-1}\varphi(p)$)  induces a morphism $\varphi^*_K:A^*_K(F)\to A^*_K(P)$ of {\it differential} graded  $\R$-algebras defined by pull-back of forms. In order to see this one uses the identity 
\begin{equation}\label{sign-change}
\varphi_{*}(a^\#_p)=-a^\#_{\varphi(p)}\ \forall p\in P\ \forall a\in\kg\  
\end{equation}
to show that for such a map $\varphi$ one has $\varphi^*\circ \iota_{a^\#}=-\iota_{a^\#}\circ \varphi^*$ as maps from $ A^*(F)$ to  $A^*(P)$.

 Let now $\pi:P\to B$ be a principal $K$-bundle on $B$.  We have the standard $\R$-algebra isomorphism
$$H_K(\pi):H^*(B,\R)\textmap{\simeq} H^*_K(P,\R)\  .$$
Using the de Rham   algebra of $B$ and the Cartan   algebra of the right $K$-manifold $P$,  the isomorphism $H_K(\pi)$ corresponds to the map $H_K(\pi):H^*_{\rm DR}(B)\to H^*_K(P)$ defined by
$$H_K(\pi)([\eta]_\mathrm{DR})=[\pi^*(\eta)]_\mathrm{C}\ .
$$
Here  $\pi^*(\eta)$ is regarded as a constant, $K$-invariant $A^*(P)$-valued polynomial map on $\kg$, and $[\cdot ]_\mathrm{C}$ denotes the Cartan cohomology class.

  We will show that, choosing a connection $A$ on the principal bundle $P$, one can give an explicit formula for the inverse map $H_K(\pi)^{-1}:  H^*_K(P)\to H^*_\mathrm{DR}(B)$.  

Fix a connection $A$ on $P$. We recall that a $k$-form $\beta$ on $P$ is called horizontal if $\iota_X\beta=0$  for any vertical tangent vector field $X$, and is called basic if is horizontal and $K$-invariant. The algebra $A^*(P)_{\rm ba}$ of basic forms on $P$ is $d$-invariant, and the differential graded  algebra $(A^*(P)_{\rm ba},d)$  can be identified with the differential graded  algebra $(A^*(B),d)$ in the obvious way.

 Denote by $(\cdot)^h$ the projection $T_P\to \Gamma_A$ of the tangent space of $P$ on the horizontal distribution $\Gamma_A\subset T_P$ defined by $A$. We will use the same symbol for the linear operator which maps a differential form on $P$  to the corresponding horizontal form, i.e., 
$$\beta^h(X_1,\dots, X_k):=\beta(X_1^h,\dots,X_k^h)\ .
$$

\begin{pr}\label{inverse} Let $A$ be a connection on $P$ and $\Omega_A\in A^2(P,\kg)$ its curvature form. For any $\alpha\in A^*_K(P)$ the form $\alpha(\Omega_A)^h\in A^*(P)$ is basic,  the map $\alpha\mapsto \alpha(\Omega_A)^h$ defines a morphism of differential graded  $\R$-algebras 
$$\pi_*^A:A^*_K(P)\to A^*(P)_{\rm ba}=A^*(B)\ ,$$
 and the induced map $H(\pi_*^A):H^*_K(P)\to  H^*_{\mathrm DR}(B)$ coincides with $H_K(\pi)^{-1}$. 
\end{pr}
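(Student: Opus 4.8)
The plan is to establish the three structural claims — that $\alpha\mapsto\alpha(\Omega_A)^h$ lands in basic forms, that it is multiplicative and degree-preserving, and that it commutes with the differentials — and then to identify the induced map on cohomology with $H_K(\pi)^{-1}$ by evaluating $\pi_*^A$ on pullbacks. I begin with well-definedness and multiplicativity, which are formal. For $\alpha\in A^*_K(P)$ the form $\alpha(\Omega_A)$ is obtained by substituting the $\kg$-valued $2$-form $\Omega_A$ into the polynomial part of $\alpha$ and wedging with the form part; since every factor $\Omega_A$ is horizontal, $\alpha(\Omega_A)$ is already horizontal, so $\alpha(\Omega_A)^h=\alpha(\Omega_A)$. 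Its $K$-invariance follows from the $K$-invariance of $\alpha$ together with the curvature equivariance $R_k^*\Omega_A=\Ad_{k^{-1}}\Omega_A$ (substitution commutes with $R_k^*$ because $\alpha$ has constant coefficients). As the horizontal distribution $\Gamma_A$ is $K$-invariant, $(\cdot)^h$ commutes with $R_k^*$; hence $\alpha(\Omega_A)^h$ is horizontal and $K$-invariant, i.e. basic. Multiplicativity is immediate, since substitution is an algebra map and $(\omega\wedge\tau)^h=\omega^h\wedge\tau^h$, and degrees match by the convention $\deg(f\otimes\eta)=2\deg f+\deg\eta$.

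The main obstacle is the chain-map property $d\circ\pi_*^A=\pi_*^A\circ d_K$, and the entire argument hinges on one structural identity. Fixing a basis $(e_a)$ of $\kg$ and writing $\Omega_A=\sum_a\Omega_A^a\,e_a$, set $\iota_{\Omega_A}\beta:=\sum_a\Omega_A^a\wedge\iota_{e_a^\#}\beta$. I claim that for every $K$-invariant $\beta\in A^*(P)$
$$ d(\beta^h)=(d\beta)^h-(\iota_{\Omega_A}\beta)^h\ .$$
To prove this I would use that $\beta^h$ is basic, so $d(\beta^h)$ is basic and hence equals its own horizontal projection, and that the horizontal projector can be written as $\prod_a\bigl(1-\theta_A^a\wedge\iota_{e_a^\#}\bigr)$. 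Expanding $d(\beta^h)$ and applying $(\cdot)^h$, every monomial still carrying a factor $\theta_A^a$ dies, because $(\theta_A^a)^h=0$; the only survivors are the zeroth-order term, contributing $(d\beta)^h$, and the first-order term $-\sum_a\theta_A^a\wedge\iota_{e_a^\#}\beta$, in which $d$ hits $\theta_A^a$ and the structure equation replaces $d\theta_A^a$ by its horizontal part $\Omega_A^a$ (the bracket part again carries a $\theta_A$ and is killed), yielding exactly $-(\iota_{\Omega_A}\beta)^h$. Higher-order terms retain a factor $\theta_A$ after differentiation and are annihilated.

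Granting this, I would apply the identity to the invariant form $\beta=\alpha(\Omega_A)$. Splitting $d(\alpha(\Omega_A))$ into the term $(d^{\mathrm f}\alpha)(\Omega_A)$ in which $d$ differentiates only the $A^*(P)$-factor, plus the term in which $d$ hits the substituted curvatures, the Bianchi identity $d\Omega_A=-[\theta_A\wedge\Omega_A]$ shows the latter carries a factor $\theta_A$, so $(d(\alpha(\Omega_A)))^h=[(d^{\mathrm f}\alpha)(\Omega_A)]^h$. On the other side, since each $\Omega_A$-factor is horizontal and so is killed by $\iota_{e_a^\#}$, one computes $\iota_{\Omega_A}(\alpha(\Omega_A))=(\iota_{a^\#}\alpha)(\Omega_A)$; here the sign $-\iota_{a^\#}$ in the Cartan differential (\ref{dK-right}) for the right $K$-manifold $P$ matches precisely the sign $-(\iota_{\Omega_A}\beta)$ in the structural identity. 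Combining gives $d(\alpha(\Omega_A)^h)=[(d^{\mathrm f}\alpha)(\Omega_A)]^h-[(\iota_{a^\#}\alpha)(\Omega_A)]^h=(d_K\alpha)(\Omega_A)^h$, as required.

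Finally, $\pi_*^A$ being a morphism of differential graded algebras makes $H(\pi_*^A)$ well-defined; to identify it with $H_K(\pi)^{-1}$ it suffices, $H_K(\pi)$ being an isomorphism, to verify $H(\pi_*^A)\circ H_K(\pi)=\id$ on $H^*_{\mathrm{DR}}(B)$. For $[\eta]_{\mathrm{DR}}$ one has $H_K(\pi)([\eta]_{\mathrm{DR}})=[\pi^*\eta]_{\mathrm C}$ with $\pi^*\eta$ a constant polynomial, whence $\pi_*^A(\pi^*\eta)=(\pi^*\eta)(\Omega_A)^h=\pi^*\eta$ — substitution does nothing to a constant and $\pi^*\eta$ is already horizontal. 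Under $A^*(P)_{\mathrm{ba}}=A^*(B)$ this is $\eta$, so the composite is the identity and $H(\pi_*^A)=H_K(\pi)^{-1}$.
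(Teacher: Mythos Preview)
Your argument is essentially correct and follows the same route as the paper's: both hinge on the identity $(d(\beta^h))^h=(d\beta)^h-(\iota_{\Omega_A}\beta)^h$, which the paper imports as Lemma~7.31 of \cite{BGV} rather than re-deriving; both then apply it to $\beta=\alpha(\Omega_A)$, use Bianchi and the horizontality of $\Omega_A$ to simplify, and obtain $d\circ\pi_*^A=\pi_*^A\circ d_K$. The identification $H(\pi_*^A)=H_K(\pi)^{-1}$ via $\pi_*^A(\pi^*\eta)=\pi^*\eta$ is the same.

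One correction, however: your claim that ``$\alpha(\Omega_A)$ is already horizontal, so $\alpha(\Omega_A)^h=\alpha(\Omega_A)$'' is false. An element $\alpha\in A^*_K(P)$ has the form $\sum f_i\otimes\beta_i$ with $\beta_i\in A^*(P)$ arbitrary forms on $P$, not horizontal ones; for instance if $\alpha=1\otimes\beta$ is a constant polynomial then $\alpha(\Omega_A)=\beta$, which has no reason to be horizontal. Only the substituted curvature factors are horizontal, not the form part. Fortunately you never actually use this claim: basicness of $\alpha(\Omega_A)^h$ follows directly from the $K$-invariance of $\alpha(\Omega_A)$ (which you argue correctly) together with the fact that $(\cdot)^h$ makes anything horizontal and commutes with $R_k^*$, and your later computations all keep the $(\cdot)^h$ in place. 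So the error is harmless, but the sentence should be deleted.
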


This statement is similar to Proposition 7.34 in \cite{BGV}\footnote{Note however that, with conventions introduced in section 7.1 of \cite{BGV}, there is a sign error in Proposition 7.34 and its proof. The proof uses the same formula (\ref{dK-right})   for  both left and right actions, and does not take into account the sign  change (\ref{sign-change}). }. For completeness we include a short proof based on the same method. 

\begin{proof} The fact that $\alpha(\Omega_A)^h$ is basic follows from the invariance properties of $\alpha$ and $\Omega_A$. The differential $d\big(\alpha(\Omega_A)^h\big)$ is automatically basic, so it suffices to compute it on horizontal vector fields, in other words is sufficient to compute the covariant derivative $D_A\big(\alpha(\Omega_A)\big)$, where $D_A:A^*(P)\to A^*(P)$ is defined by  $D_A(\sigma)=(d(\sigma^h))^h$. Fixing a basis $(a_i)_i$ in $\kg$, putting $\Omega_A=\Omega_{A}^i a_i$, and using Lemma 7.31 in \cite{BGV} we obtain  for a tensor product $  f\otimes\beta\in \R[\kg]\otimes A^*(P)$
$$D_A\big((f\otimes \beta)(\Omega_A))= D_A\big(  f (\Omega_A)\wedge\beta)=\left\{d(f(\Omega_A)\wedge\beta) -\Omega_A^i\ \iota_{a_i^\#}(f(\Omega_A)\wedge\beta)\right\}^h\ .
$$

Since $(d\Omega_A)^h=0$ by Bianchi's identity and $\iota_{a_i^\#}\Omega_A=0$ (because $\Omega_A$ is a horizontal form), we obtain

$$D_A\big((f\otimes \beta)(\Omega_A))= \left\{ (f(\Omega_A)\wedge d\beta)-\Omega_A^i\ (f(\Omega_A)\wedge\iota_{a_i^\#}\beta)\right\}^h$$
$$=\left\{ f(\Omega_A)\wedge (d\beta -  \Omega_A^i\wedge\iota_{a_i^\#}\beta)\right\}^h=\left\{d_K(f\otimes \beta)(\Omega_A)\right\}^h\ .
$$
Therefore, for every $\alpha  \in \R[\kg]\otimes A^*(P)$ ($K$-invariant or not) one has
$$D_A(\alpha(\Omega_A))=\{(d_K(\alpha))(\Omega_A)\}^h\ .$$
This implies 
$$(d\circ \pi^A_*)(\alpha)=(\pi^A_*\circ d_K)(\alpha) 
$$
for  every $\alpha\in \big[\R[\kg]\otimes A^*(P)\big]^K$.
The fact that the induced map $H(\pi_*^A):H^*_K(P)\to  H^*_{\mathrm{DR}}(B)$ coincides with $H_K(\pi)^{-1}$ is now obvious, since for any form $\eta\in A^*(B)$, the basic form $\pi^A_*(\pi^*(\eta))$ coincides with $\pi^*(\eta)$, so the corresponding form in $A^*(B)$ is $\eta$.
\end{proof}
\begin{re} The Chern-Weil  morphism $\R[\kg]^K\to Z^*_{\rm DR}(B)$ can be obtained as the composition $ \pi_*^A \circ j_K^P$, where $j_K^P:\R[\kg]^K\hookrightarrow A^*_K(P)$ is the obvious embedding. For an invariant polynomial $f\in \R[\kg]^K$, the cohomology class $[\pi_*^A (f)]_{\rm DR}$ is the characteristic class  $c_f$ associated with $f$. 
\end{re}

\subsection{Sections in Hamiltonian fibre bundles}

Let $\pi:P\to B$ be a principal $K$-bundle on $B$, $F$ a manifold endowed with left $K$-action, and let $E:=P\times_K F$ be the associated bundle with fiber $F$.  In this section we study the morphism $H_K^*(F)\to H^*_{\rm DR}(B)$ defined by a section $\varphi\in\Gamma(B,P\times_K F)$.

It is   well known that the data of a section $\varphi$ in $E$ is equivalent to the data of a $K$-equivariant map $P\to F$. We will denote the two objects by the same symbol.  Using the formalism and the results  of section \ref{Cartan}, we obtain a morphism $H_K(\varphi^*_K):H^*_K(F)\to H^*_K(P)$, hence a morphism
$$H_K(\pi)^{-1}\circ H_K(\varphi^*_K): H^*_K(F)\to H_{\rm DR}^*(B)\ .  
$$
This morphism coincides with $H(\pi_*^A)\circ H_K(\varphi^*_K)$,  and is given explicitly by the formula
$$[\alpha]_{\rm C}\mapsto \big[\{\varphi^*_K(\alpha)(\Omega_A)\}^h\big]   
$$
via the identification $A^*(B)=A^*(P)_{\rm ba}$. The   following result   was  stated in an equivalent way  in    \cite{LT2} section 6.2.2, where it was proved   without using equivariant cohomology  as a special case of the so-called the  ``generalized symplectic reduction".

\begin{pr} \label{omega-mu} Let $\omega$ be a $K$-invariant symplectic form on $F$, and $\mu:F\to \kg^\vee$ a moment map     for the $K$-action on the symplectic manifold $(F,\omega)$. Denote by the same symbol also the associated comoment map   $\kg\to A^0(F)$, which is an invariant $A^0(F)$-valued linear map on $\kg$. Then:
\begin{enumerate}
\item $d_K(\omega-\mu)=0$, hence $\omega-\mu$ defines a cohomology class $[\omega-\mu]_{\rm C}\in H^2_K(F)$.
\item The basic 2-form
$$
\left\{\varphi^*(\omega)-\langle \mu\circ\varphi,\Omega_A\rangle\right\}^h 
$$
is closed and represents  the de Rham class 
$$H_K(\pi)^{-1}\circ H_K(\varphi^*_K)([\omega-\mu]_{\rm C})\in H^2_{\rm DR}(B)\ .$$
 \end{enumerate}
\end{pr}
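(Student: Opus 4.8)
The plan is to reduce Proposition \ref{omega-mu} to the already-established Proposition \ref{inverse} by verifying the two asserted properties at the level of the Cartan model, and then transporting them through the functorial machinery. The key observation is that both claims are really statements about the equivariant form $\omega-\mu \in A^2_K(F)$ on the target manifold $F$, which then get pulled back via $\varphi^*_K$ and pushed forward via $\pi^A_*$. So I would first work entirely upstairs on $F$, establish that $\omega-\mu$ is $d_K$-closed, and only afterward invoke the functoriality and the explicit inverse formula.

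For part (1), I would unwind the definition of the Cartan differential (\ref{dK-left}) applied to the element $\omega-\mu \in \big[\R[\kg]\otimes A^*(F)\big]^K$, regarded as a polynomial map $a \mapsto \omega - \mu(a)$ on $\kg$ (here $\omega$ is constant and $\mu(a)$ is the component of the comoment map). Evaluating, I get $d_K(\omega-\mu)(a) = d(\omega - \mu(a)) + \iota_{a^\#}(\omega - \mu(a))$. Since $\omega$ is symplectic we have $d\omega = 0$; since $\mu(a)$ is a function, $\iota_{a^\#}(\mu(a)) = 0$; and the remaining terms are $-d(\mu(a)) + \iota_{a^\#}\omega$. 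The defining property of the moment map is precisely $d(\mu(a)) = \iota_{a^\#}\omega$ (with the sign convention implicit in the comoment normalization used in the paper), so these cancel and $d_K(\omega-\mu) = 0$. This yields a well-defined class $[\omega-\mu]_\mathrm{C} \in H^2_K(F)$, as claimed. The only delicate point here is matching the sign/normalization convention for $\mu$ against equation (\ref{dK-left}); I would state it explicitly so the cancellation is transparent.

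For part (2), I would combine the discussion preceding the proposition with Proposition \ref{inverse}. The composite $H_K(\pi)^{-1}\circ H_K(\varphi^*_K)$ equals $H(\pi^A_*)\circ H_K(\varphi^*_K)$, and by the explicit formula displayed just before the statement, a representative of the image of $[\omega-\mu]_\mathrm{C}$ is the basic form $\{\varphi^*_K(\omega-\mu)(\Omega_A)\}^h$. It remains to identify $\varphi^*_K(\omega-\mu)(\Omega_A)$ with the stated form $\varphi^*(\omega) - \langle \mu\circ\varphi, \Omega_A\rangle$. The constant piece $\omega$ pulls back to $\varphi^*(\omega)$; the piece $\mu$, being linear in $a\in\kg$, evaluates on the $\kg$-valued curvature $\Omega_A$ to give exactly the contraction $\langle \mu\circ\varphi, \Omega_A\rangle$ after pullback along $\varphi$ (the equivariant pullback $\varphi^*_K$ substitutes $\varphi^*$ on the form factor while acting as identity on the polynomial factor). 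Closedness of the resulting basic form is then automatic: by Proposition \ref{inverse} the morphism $\pi^A_*$ is a chain map, so it sends the $d_K$-closed element $\varphi^*_K(\omega-\mu)$ to a $d$-closed basic form, and it represents the asserted de Rham class by construction.

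**The main obstacle** I anticipate is bookkeeping of signs and conventions rather than any conceptual difficulty: the paper carefully distinguishes the left-action differential (\ref{dK-left}) from the right-action differential (\ref{dK-right}), and flags the sign change (\ref{sign-change}) under equivariant pullback, even footnoting a sign error in \cite{BGV}. The form $\omega-\mu$ lives on the left $K$-manifold $F$, while $\Omega_A$ and the push-forward live on the right $K$-manifold $P$, so I must ensure that the verification of $d_K(\omega-\mu)=0$ uses (\ref{dK-left}), while the pullback $\varphi^*_K$ correctly incorporates (\ref{sign-change}), and that the two sign conventions are mutually consistent so the final contraction $\langle\mu\circ\varphi,\Omega_A\rangle$ comes out with the displayed sign. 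Once these conventions are pinned down, everything else is a direct consequence of Propositions \ref{inverse} and the moment-map identity, and no genuinely new computation is needed.
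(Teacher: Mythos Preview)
Your proposal is correct and follows essentially the same route as the paper: the paper's proof of part (1) is the one-line computation $d_K(\omega-\mu)(a)=-d\mu^a+\iota_{a^{\#}}(\omega)=0$, and for part (2) it simply says ``follows from Proposition \ref{inverse}'', which is exactly the unpacking you describe via $H(\pi^A_*)\circ H_K(\varphi^*_K)$ and the explicit identification of $\{\varphi^*_K(\omega-\mu)(\Omega_A)\}^h$. Your additional remarks on sign conventions are prudent but not a departure from the paper's method.
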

\begin{proof} One has $d_K(\omega-\mu)(a)=d((\omega-\mu)(a))+\iota_{a^{\#}}((\omega-\mu)(a))=-d\mu^a+\iota_{a^{\#}}(\omega)=0$. The second statement follows from Proposition \ref{inverse}.
\end{proof}

Suppose now that 
\begin{equation}\label{exact-omega}\omega=d\theta\ ,
\end{equation}
where $\theta$ is a $K$-invariant 1-form on $F$ \footnote{This happens for instance when $(F,\omega)$ is a Hermitian  vector space endowed with the canonical Kählerian  form, or when $F$ is the cotangent bundle of a $K$-manifold endowed with the canonical symplectic structure.}.
In this case one has a natural moment map $\mu_\theta$ for the $K$-action on  $(F,\omega)$ defined by
\begin{equation}\label{exact-mu}\langle \mu_\theta ,a \rangle:=-\iota_{a^\#}\theta\ \ \forall a\in\kg\ .
\end{equation}
Such a moment map is called exact (see \cite{GGK} Example 2.8 p. 18). 
Note that $\theta$ defines an element of $A^1_K(F)$ and  formulae (\ref{exact-omega}), (\ref{exact-mu}) are equivalent to
\begin{equation}\label{exact-omega-mu}
d_K(\theta)=\omega-\mu_\theta\ .
\end{equation}
Therefore the cohomology class $[\omega-\mu_\theta]_{\rm C}\in H^*_K(F)$  hence also its image in $H^2_{\rm DR}(B)$  vanishes. More precisely we have

\begin{co} \label{exact} With the notations and assumptions of Proposition \ref{omega-mu}, suppose that $\omega=d\theta$, for a $K$-invariant 1-form on $F$, and let $\mu_\theta$ be the corresponding  exact moment map. Then $\varphi^*(\theta)^h$ is a basic 1-form on $P$ and   
$$d(\varphi^*(\theta)^h)=\left\{\varphi^*(\omega)-\langle \mu_\theta\circ\varphi,\Omega_A\rangle\right\}^h\ .
$$
\end{co}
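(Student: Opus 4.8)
The plan is to deduce the corollary formally from the functoriality established in section \ref{Cartan} together with the Cartan-level identity (\ref{exact-omega-mu}), rather than recomputing with vector fields. The key observation is that the basic form $\varphi^*(\theta)^h$ is precisely the image of $\theta$ under the composition $\pi_*^A\circ\varphi^*_K$. Indeed, regarding the $K$-invariant $1$-form $\theta$ as the constant (degree zero in the polynomial variable) element $1\otimes\theta\in A^1_K(F)$, one has $\varphi^*_K(\theta)=1\otimes\varphi^*(\theta)\in A^1_K(P)$; and since $\pi_*^A(1\otimes\beta)=\beta(\Omega_A)^h=\beta^h$ for a constant form $\beta$, we obtain $\pi_*^A(\varphi^*_K(\theta))=\varphi^*(\theta)^h$. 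In particular $\varphi^*(\theta)^h$ lies in the image of $\pi_*^A$, which by Proposition \ref{inverse} consists of basic forms; this settles the first assertion.

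For the formula I would exploit that both maps in the composition are morphisms of \emph{differential} graded algebras. By Proposition \ref{inverse} the pushforward $\pi_*^A$ commutes with the differentials, i.e.\ $d\circ\pi_*^A=\pi_*^A\circ d_K$, while by the discussion in section \ref{Cartan} the pullback $\varphi^*_K$ commutes with $d_K$ — this is exactly where the sign-change identity (\ref{sign-change}) is used, to guarantee that $\varphi^*_K$ is a chain map from the right-action Cartan algebra of $P$ to the left-action Cartan algebra of $F$. Combining these,
$$d\big(\varphi^*(\theta)^h\big)=d\big(\pi_*^A(\varphi^*_K(\theta))\big)=\pi_*^A\big(\varphi^*_K(d_K\theta)\big),$$
and then I would invoke (\ref{exact-omega-mu}), which states $d_K\theta=\omega-\mu_\theta$ in $A^2_K(F)$, to replace $d_K\theta$ by $\omega-\mu_\theta$.

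Finally I would unwind $\pi_*^A(\varphi^*_K(\omega-\mu_\theta))$ to recover the right-hand side. The $2$-form $\omega$ contributes only its constant part, giving $\varphi^*(\omega)^h$, whereas the comoment map $\mu_\theta$ is $\kg$-linear, so $\varphi^*_K(\mu_\theta)$ evaluated at the curvature produces $\langle\mu_\theta\circ\varphi,\Omega_A\rangle$; applying $(\cdot)^h$ then yields exactly $\{\varphi^*(\omega)-\langle\mu_\theta\circ\varphi,\Omega_A\rangle\}^h$, the closed basic $2$-form of Proposition \ref{omega-mu}. The only point demanding care is the bookkeeping of polynomial degree under $\varphi^*_K$ and under evaluation at $\Omega_A$, distinguishing the constant contribution of $\omega$ from the $\kg$-linear contribution of $\mu_\theta$. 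I do not expect a genuine obstacle here: the substantive work — in particular the sign subtlety making $\varphi^*_K$ a chain map — has already been carried out in section \ref{Cartan}, so the corollary is a purely formal consequence of the two chain-map properties and the identity (\ref{exact-omega-mu}).
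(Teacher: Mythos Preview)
Your proposal is correct and follows essentially the same approach as the paper: apply $\varphi^*_K$ to the Cartan identity $d_K\theta=\omega-\mu_\theta$, then apply $\pi_*^A$ and use that both maps are morphisms of differential graded algebras (Proposition~\ref{inverse} for $\pi_*^A$, section~\ref{Cartan} for $\varphi^*_K$). The paper's proof is more terse but the logic is identical.
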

\begin{proof}   Using (\ref{exact-omega-mu}) we obtain
$$d_K(\varphi^*\theta)=\varphi^*\omega-\mu_\theta\circ\varphi\ \hbox{ in }A^*_K(P)\ ,
$$
hence $\pi^*_A(d_K(\varphi^*\theta))=\pi^*_A(\varphi^*\omega-\mu_\theta\circ\varphi)$. The right hand term coincides with $ \left\{\varphi^*(\omega)-\langle \mu_\theta\circ\varphi,\Omega_A\rangle\right\}^h$, and the left hand term coincides with $d(\varphi^*(\theta)^h)$ by   Proposition \ref{inverse}.
\end{proof}

Suppose  now that:
\begin{itemize}
\item $B$ and $F$ are complex manifolds and $A$ is a (1,1)-connection on $P$, i.e.,  the curvature $F_A\in A^2(B,\ad(P))$ has type (1,1). This condition is equivalent to the integrability of  the almost complex structure $J_A$ on the complexification $Q:=P\times_K K^\C$   (see \cite{LT2} Appendix 7.1). 
\item The $K$-action on $F$ is induced by a holomorphic action of the complex reductive group $K^\C$. 
\end{itemize}
If these conditions are satisfied, we define:
\begin{dt} A section $\varphi:P\to F$ is called $A$-holomorphic if one of the following equivalent conditions is satisfied:
\begin{itemize}
\item $(d\varphi)^h:\Gamma_A\to T_F$ commutes with the almost complex structure induced  via $\pi_*:\Gamma_A\to T_B$ on the horizontal distribution $\Gamma_A$ of $A$.
\item  the  $K^\C$-equivariant map $Q\to F$  induced by $\varphi$ is $J_A$-holomorphic.
\end{itemize}
\end{dt}

Suppose now that   $\omega=\frac{1}{4}dd^c\nu$, where $\nu$ is a $K$-invariant real function on $F$, and the operator  $d^c$ is defined as $d^c:=i(\bar\partial-\partial)=J^{-1} dJ$.  For a  tangent vector $X$ and a smooth function $\sigma$ one has $d^c(\sigma)(X)= -d\sigma(JX)$.

In this case we obtain an obvious integral of $\omega$ -- namely $\theta_\nu:=\frac{1}{4}d^c\nu$ -- hence also an associated exact moment map $\mu_{\theta_\nu}$ which we   simply denote  by  $\mu_\nu$. 

For an equivariant map $\varphi:P\to F$ we denote by $\nu(\varphi)\in A^0(B)$ the real function  defined by  $\nu(\varphi)\circ\pi=\nu\circ\varphi$. If  now $\varphi:P\to F$   is   $A$-holomorphic, we obtain
\begin{equation}\label{dc-commute}
\{\varphi^*(d^c\nu)\}^h=\pi^*(d^c(\nu(\varphi)))
\end{equation}
  by evaluating both sides on a horizontal tangent vector.
 
\begin{co}\label{ddc} With the notations and under the assumptions above suppose that $\omega=\frac{1}{4}dd^c\nu$, $A$ is a (1,1)-connection on $P$, and $\varphi$ is $A$-holomorphic.  Then
\begin{equation}\label{ddc-formula}
\frac{1}{4} dd^c(\nu(\varphi))=\left\{\varphi^*(\omega)-\langle \mu_\nu\circ\varphi,\Omega_A\rangle\right\}^h\ ,
\end{equation}
where the basic form on the right has been regarded as a 2-form on $B$.
\end{co}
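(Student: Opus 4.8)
The plan is to combine the exact moment map identity of Corollary~\ref{exact} with the holomorphic commutation relation (\ref{dc-commute}); since both ingredients are already available, the argument is essentially a short synthesis rather than a new computation.

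First I would apply Corollary~\ref{exact} to the $K$-invariant $1$-form $\theta_\nu:=\frac14 d^c\nu$. Because $\omega=\frac14 dd^c\nu=d\theta_\nu$, the associated exact moment map is by definition $\mu_\nu=\mu_{\theta_\nu}$, so the corollary tells us that $\varphi^*(\theta_\nu)^h$ is a basic $1$-form on $P$ satisfying
$$d\big(\varphi^*(\theta_\nu)^h\big)=\left\{\varphi^*(\omega)-\langle \mu_\nu\circ\varphi,\Omega_A\rangle\right\}^h\ .$$
This already identifies the right-hand side of (\ref{ddc-formula}) with $d\big(\varphi^*(\theta_\nu)^h\big)$, so all that remains is to evaluate the $1$-form $\varphi^*(\theta_\nu)^h$ itself.

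Here the hypotheses that $A$ is a $(1,1)$-connection and that $\varphi$ is $A$-holomorphic enter, and they enter \emph{only} through (\ref{dc-commute}). Since $\varphi^*(\theta_\nu)^h=\frac14\{\varphi^*(d^c\nu)\}^h$, formula (\ref{dc-commute}) gives
$$\varphi^*(\theta_\nu)^h=\tfrac14\,\pi^*\big(d^c(\nu(\varphi))\big)\ ,$$
so that under the identification $A^*(P)_{\rm ba}=A^*(B)$ the basic form $\varphi^*(\theta_\nu)^h$ corresponds to $\frac14 d^c(\nu(\varphi))\in A^1(B)$. Since $d$ commutes with $\pi^*$ and with this identification, taking exterior derivatives yields $d\big(\varphi^*(\theta_\nu)^h\big)=\frac14 dd^c(\nu(\varphi))$ as $2$-forms on $B$; combined with the display of the previous paragraph this is exactly (\ref{ddc-formula}).

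Because (\ref{dc-commute}) is already established, no genuine obstacle remains, and the only thing to watch is the sign-and-factor bookkeeping coming from the conventions $d^c=J^{-1}dJ$ and $\theta_\nu=\frac14 d^c\nu$. The conceptual heart of the statement — that, for an $A$-holomorphic section over a $(1,1)$-connection, horizontal projection intertwines $d^c$ on $F$ with $d^c$ on $B$ — is precisely what (\ref{dc-commute}) encodes, and it rests on the complex-linearity of the horizontal differential $(d\varphi)^h$. Were that relation not already proved, verifying it would be the crux; as it stands, the corollary is a formal consequence of Corollary~\ref{exact} and (\ref{dc-commute}).
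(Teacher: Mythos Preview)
Your proof is correct and follows exactly the route the paper intends: the corollary is stated immediately after (\ref{dc-commute}) precisely because it is the combination of Corollary~\ref{exact} (applied to $\theta_\nu=\frac14 d^c\nu$) with the holomorphic commutation identity (\ref{dc-commute}). There is nothing to add.
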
 

{\ }\\
{\bf Example:} Let $(F,h_F)$ be a Hermitian vector space, and $\nu_F:F\to\R$ the   $\U(F)$-invariant real function defined by $\nu_F(v):=h_F(v,v)$.  The Kähler form $\omega_F$ associated with  $h_F$ can be written as
$$\omega_F=\frac{i}{2}\partial\bar\partial \nu_F=\frac{1}{4} dd^c\nu_F\ .
$$ 
Put $\theta_F:=\frac{1}{4} d^c\nu_F$. Note that $\theta_F$ is $\U(F)$-invariant and 

\begin{equation}\label{theta-omega}
\omega_F=d\theta_F \ .
\end{equation}

Let now $\rho:K\to \U(F)$ be a unitary representation, and let $\langle\cdot ,\cdot \rangle_\kg$ be an $\ad$-invariant inner product on the Lie algebra $\kg$ of $K$. {\it The   standard moment map} $\mu_0:F\to\kg^\vee$ for the $K$-action on $F$ defined by $\rho$ is  the exact moment map associated with the $K$-invariant form $\theta_F$, i.e., it is given by
$$\langle \mu_0, a\rangle=-\iota_{a^\#}\theta_F\ \forall a\in\kg\ .
$$
A simple computation shows that
$$\mu_0(v)(a)=\left\langle\rho^*\left(-\frac{i}{2} v\otimes v^*\right),a \right\rangle_\kg=\left\langle -\frac{i}{2} v\otimes v^*,\rho_*(a)\right\rangle_{\u(F)}\ ,
$$
where the adjoint is computed with respect to $\langle\cdot ,\cdot \rangle_\kg$ and the standard inner product $\langle\cdot ,\cdot \rangle_{\u(F)}$ on the Lie algebra $\u(F)$ of skew-Hermitian endomorphisms of $F$.  
\begin{dt} \label{homotheties} A unitary representation $\rho:K\to U(F)$ is said to contain  the  homotheties of $F$  if there exists a $K$-invariant element $a_0\in \kg$ such that $\rho_*(a_0)=i\id_F$.
\end{dt}

For such an element $a_0$ one has
\begin{equation}\label{a0}\mu_0(v)(-2a_0)=\langle v\otimes v^*,\id_F\rangle_{i\ug(F)}=h_F(v,v)=\nu_F(v)\ .
\end{equation}

\subsection{Families of connections and sections}\label{families}

Let ${\cal C}$ be a manifold and ${\cal G}$ a Lie group with a free action on ${\cal C}$ such that the quotient manifold ${\cal B}:={\cal C}/{\cal G}$ exists and the projection ${\cal C}\to {\cal B}$ becomes a principal ${\cal G}$-bundle.  We allow both ${\cal C}$ and ${\cal G}$ to be infinite dimensional. Fix a connection $\Gamma_0$ in this principal bundle.
 
 Let $X$ be a closed $d$-dimensional manifold,  $P$ a principal $K$-bundle on $X$,  and 
$$u:{\cal G}\to\Aut(P)=\Gamma(X,P\times_\Ad K)$$
   a morphism of Lie groups. It is important  that $u$ is  not supposed to be surjective; in many interesting examples  ${\cal G}$ is the subgroup of $\Aut(P)$  associated with a normal subgroup of $K$.

\begin{dt} A smooth ${\cal G}$-equivariant  family of connections on $P$ is a smooth map   $\ag:{\cal C}\to {\cal A}(P)$  given by a family $(A_c)_{c\in{\cal C}}$ such that 
\begin{equation}\label{cond} A_{gc}=u(g)_*(A_c)\ \forall g\in {\cal G}\ \forall c\in{\cal C}\ .
\end{equation}
\end{dt}
{\ }\\
{\bf Examples:} 1. Let $E$, $E_0$ be Hermitian  vector bundles or rank $r$, $r_0$ on $X$ and $A_0$ a fixed connection on $A_0$. A pair of type $(E,E_0)$ is a pair $(A,\varphi)$ consisting of a Hermitian connection on $E$ and a section $\varphi\in A^0(X,E^\vee\otimes E_0)$.
In \cite{OT2} we gave a gauge theoretical construction of certain Quot spaces.  In this construction we used as configuration space  the space of irreducible pairs of type $(E,E_0)$, i.e., the subspace of pairs $(A,\varphi)\in {\cal A}(E)\times  A^0(X,E^\vee\otimes E_0)$ with trivial stabilizer with respect to the gauge group ${\cal G}:=\Gamma(X,\U(E))$.

In this case we have $K=\U(r)\times\U(r_0)$, $P:=P_E\times_X P_{E_0}$ is the product of the frame bundles associated with $E$ and $E_0$, and $u$ is  the obvious embedding of ${\cal G}$ in $\Aut(P)$. The map $\ag$ sends a pair $(A,\varphi)$ to the product connection $A\times A_0$. The configuration space and gauge group of the classical vortex equation is obtained as special case of this construction taking $(E_0,A_0)$ to be the trivial Hermitian line bundle endowed with the trivial connection.  \\

2. With the same notations as above let now $a_0$ be a fixed connection on the determinant $\U(1)$-bundle $\det(E)$. We denote by ${\cal A}_{a_0}(E)$ the space of $a_0$-oriented connections on $E$, i.e., the space of Hermitian connections on $E$ inducing $a_0$ on $\det(E)$. In the theory of master spaces \cite{OT1} one considers  as configuration space  the space of {\it irreducible $a_0$-oriented pairs} of type $(E,E_0)$, i.e., the subspace of pairs $(A,\varphi)\in{\cal A}_{a_0}(E)\times \Gamma(X,E^\vee\otimes E_0))$ with trivial stabilizer with respect to the natural action of the gauge group ${\cal G}:={\Gamma(X,\SU(E))}$. \\ \\

Put   $\Pg:={\cal C}\times P$ and note that  the  projection $\pi:\Pg\to {\cal C}\times X$ is a  principal $K$-bundle. We define  left ${\cal G}$-actions     on ${\cal C}\times X$ and $\Pg$ by 
$$ g (c,x):=(g c,x)\ ,\  g(c,p):=( gc,u(g) (p) )\ .$$

  The bundle $\Pg$ comes with a tautological connection $\Ag$  whose horizontal space at a point $(c,p)\in\Pg$ is $T_c {\cal C}\times \Gamma^{A_c}_p$. In other words $\Ag$  agrees with the trivial (product) connection in the ${\cal C}$ direction (in which $\Pg$ is trivial), and for every $c\in {\cal C}$  its restriction to $\{c\}\times P$ coincides with $A_c$. Consider points $c\in{\cal C}$, $x\in X$, $p\in P_x$ and tangent vectors
$\alpha_1$, $\alpha_2\in T_c{\cal C}$, $v_1$, $v_2\in T_x X$. Denote by $\tilde v_1$, $\tilde v_2\in T_{p}P$ the $A_c$-horizontal lifts of $v_1$, $v_2$. Then one has
\begin{equation}\label{CurvAg}\Omega_{\Ag}((\alpha_1,\tilde v_1),(\alpha_2,\tilde v_2))= \Omega_{A_c}(\tilde v_1,\tilde v_2)+\iota_{v_2}\ag_*(\alpha_1)-\iota_{v_1}\ag_*(\alpha_2)\  .\end{equation}

The equivariance property  (\ref{cond}) implies that $\Ag$ is ${\cal G}$-invariant. On the other hand, the fixed connection $\Gamma_0$  on the principal ${\cal G}$-bundle ${\cal C}\to{\cal B}$ defines a connection  $ \Gamma$ on the principal ${\cal G}$-bundle ${\cal C}\times X\to{\cal B}\times X$, whose horizontal distribution is  given by $  \Gamma_{c,x}:=\Gamma_{0,c}\oplus T_xX$.  The curvature of $ \Gamma$ is  
\begin{equation}\label{CurvTildeGamma}\Omega_{ \Gamma}((\alpha_1,v_1),(\alpha_2,v_2))=\Omega_{\Gamma_0}(\alpha_1,\alpha_2)\ \  \forall \alpha_1,  \ \alpha_2\in \Gamma_{0,c},\  \forall v_1, \ v_2\in T_xX\ .
\end{equation}

Denote by $\P$ the quotient bundle $\P:=\bar \Pg$ on the quotient manifold ${\cal B}\times X$. 
As explained in section \ref{quotConn},   the connection $\Gamma$ can be used to define the quotient connection $\A=\bar \Ag$ on the quotient $K$-bundle $\bar \pi:\P\to {\cal B}\times X$, whose curvature is given by Proposition \ref{CurvProp}. In order to write down explicitly this curvature, we need an explicit formula for the second term of the right hand side in (\ref{curv-formula}).
\begin{re}\label{Rem} Let $\nu\in \mathrm{Lie}({\cal G})$, $u_*(\nu)\in A^0(P\times_\ad\kg)$ the corresponding infinitesimal automorphism of $P$, $\nu^\#_\Pg$ the induced vector field on $\Pg={\cal C}\times P$, and let $\pg=(c,p)\in\Pg$. Then
$$\theta_{\Ag}(\nu^\#_{\Pg,\pg})=u_*(\nu)_p\ ,
$$
 where on the right $u_*(\nu)$ has been regarded as a $K$-equivariant map $P\to \kg$ and $u_*(\nu)_p\in\kg$ is the value of this map at $p$.
\end{re}
\begin{proof}
Indeed, $\nu^\#_{\Pg,\pg}=(\nu^\#_{{\cal C},c},u_*(\nu)^\#_p)=(\nu^\#_{{\cal C},c},(u_*(\nu)_p)^\#_p)$. Since $(\nu^\#_{{\cal C},c},0)$ is $\Ag$-horizontal  and $\theta_{\Ag}$ agrees with the connection form $\theta_{A_c}$ on $\{c\}\times P$, the claim follows from the properties of a connection form.
\end{proof}

 Applying Proposition \ref{CurvProp} to our situation we get  
\begin{pr} Let $c\in{\cal C}$, $[c]$ its image in ${\cal B}$. Let $a_1$, $a_2\in T_{[c]}{\cal B}$, and denote by $\alpha_i$  the corresponding $\Gamma_0$-horizontal lifts. Consider points $x\in X$, $p\in P_x$, tangent vectors $v_1$, $v_2\in T_xX$, and let $\tilde v_i \in \Gamma^{A_c}_p\in T_pP$ be their $A_c$-horizontal lifts. Put $\xi_i:=(a_i,v_i)\in T_{([c],x)}({\cal B}\times X)$ and denote by $\tilde \xi_i\in T_{[c,p]}\P$ their $\A$-horizontal lifts. Then
\begin{equation}\label{OmegamathbbA}\Omega_{\A}(\tilde\xi_1,\tilde\xi_2)=\Omega_{A_c}(\tilde v_1,\tilde v_2)+\iota_{v_2}\ag_*(\alpha_1)-\iota_{v_1}\ag_*(\alpha_2)+\left\{u_*\Omega_{\Gamma_0}(\alpha_1,\alpha_2)\right\}_p\ .
\end{equation}
\end{pr}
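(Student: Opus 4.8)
The plan is to specialize Proposition \ref{CurvProp} to the situation of this section. I take $M:={\cal C}\times X$ with the free ${\cal G}$-action $g(c,x)=(gc,x)$, total space $P:=\Pg$, quotient $\bar M:={\cal B}\times X$ with quotient bundle $\bar P:=\P$, invariant connection $A:=\Ag$, auxiliary connection on the principal ${\cal G}$-bundle ${\cal C}\times X\to{\cal B}\times X$ equal to $\Gamma$, and quotient connection $\bar A:=\A=\bar\Ag$. The equivariance condition (\ref{cond}) ensures that $\Ag$ is ${\cal G}$-invariant, so Proposition \ref{CurvProp} applies at $\pg=(c,p)$ lying over $([c],x)$ and gives
\begin{equation*}
\Omega_{\A}(\tilde\xi_1,\tilde\xi_2)=\Omega_{\Ag}(\tilde\xi_1,\tilde\xi_2)+\theta_{\Ag}\bigl(\Omega_\Gamma(\eta_1,\eta_2)^\#_{\Pg,\pg}\bigr),
\end{equation*}
where, in the notation of Proposition \ref{CurvProp}, the vectors $\xi_i=(a_i,v_i)$ play the role of $\bar\xi,\bar\eta$, their $\Gamma$-horizontal lifts $\eta_i$ play the role of $\xi,\eta$, and the $\tilde\xi_i$ are the $\Ag$-horizontal lifts of the $\eta_i$.

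First I would make the two successive horizontal lifts explicit. Because the horizontal distribution of $\Gamma$ at $(c,x)$ is $\Gamma_{0,c}\oplus T_xX$, the $\Gamma$-horizontal lift of $(a_i,v_i)$ is $\eta_i=(\alpha_i,v_i)$, with $\alpha_i$ the $\Gamma_0$-horizontal lift of $a_i$. Because the horizontal space of $\Ag$ at $(c,p)$ is $T_c{\cal C}\times\Gamma^{A_c}_p$, the $\Ag$-horizontal lift of $(\alpha_i,v_i)$ is $\tilde\xi_i=(\alpha_i,\tilde v_i)$, matching the lifts appearing in the statement.

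Then I would evaluate the two summands. For the first, formula (\ref{CurvAg}) gives directly
\begin{equation*}
\Omega_{\Ag}\bigl((\alpha_1,\tilde v_1),(\alpha_2,\tilde v_2)\bigr)=\Omega_{A_c}(\tilde v_1,\tilde v_2)+\iota_{v_2}\ag_*(\alpha_1)-\iota_{v_1}\ag_*(\alpha_2),
\end{equation*}
which accounts for the first three terms. For the second, formula (\ref{CurvTildeGamma}) collapses the curvature of $\Gamma$ on the horizontal lifts to the element $\Omega_\Gamma(\eta_1,\eta_2)=\Omega_{\Gamma_0}(\alpha_1,\alpha_2)$ of $\mathrm{Lie}({\cal G})$; applying Remark \ref{Rem} with $\nu:=\Omega_{\Gamma_0}(\alpha_1,\alpha_2)$ then evaluates $\theta_{\Ag}(\nu^\#_{\Pg,\pg})=\{u_*\Omega_{\Gamma_0}(\alpha_1,\alpha_2)\}_p$, the fourth term. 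Summing the two contributions yields (\ref{OmegamathbbA}). Since every ingredient is already established, the argument amounts to careful bookkeeping; the only delicate point is the two-stage lift — one must check that lifting first through $\Gamma$ and then through $\Ag$ reproduces exactly the pairs $(\alpha_i,\tilde v_i)$, and that the curvature value $\Omega_{\Gamma_0}(\alpha_1,\alpha_2)$ is fed into Remark \ref{Rem} at the correct base point $p\in P_x$.
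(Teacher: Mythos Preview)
Your proof is correct and follows exactly the paper's approach: apply Proposition \ref{CurvProp} to the data $(M,P,A,\Gamma)=({\cal C}\times X,\Pg,\Ag,\Gamma)$, then identify the two terms using (\ref{CurvAg}), (\ref{CurvTildeGamma}) and Remark \ref{Rem}. The only quibble is a mild notational overload---you use $\tilde\xi_i$ both for the $\A$-horizontal lifts in $T_{[c,p]}\P$ (as in the statement) and for the $\Ag$-horizontal lifts $(\alpha_i,\tilde v_i)\in T_{(c,p)}\Pg$---but since $\qg_*$ carries the latter to the former and both curvature forms take values in $\kg$, the identification is harmless.
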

 \begin{proof} Indeed,  Proposition \ref{CurvProp} together with   Remark \ref{Rem} gives 
$$
\Omega_{\A}(\tilde\xi_1,\tilde\xi_2)=\Omega_{\Ag}((\alpha_1,\tilde v_1),(\alpha_2,\tilde v_2))+\theta_{\Ag} (\Omega_{\Gamma}((\alpha_1,v_1),(\alpha_2,v_2))^\#_{\Pg,(c,p)})$$
$$= \Omega_{\Ag}((\alpha_1,\tilde v_1),(\alpha_2,\tilde v_2))+\left\{u_*\Omega_{ \Gamma}((\alpha_1,v_1),(\alpha_2,v_2))\right\}_p \ .
$$
Now use (\ref{CurvAg}) and (\ref{CurvTildeGamma}).
\end{proof}

As a corollary we obtain   the following formula which computes  characteristic forms of degree 2 of the connection $\A$.
\begin{co} Let $h:\kg\times\kg\to\R$ be an $\ad$-invariant symmetric bilinear form. Let $c\in {\cal C}$, $x\in X$, $p\in P_x$, $a_1$, $a_2\in T_{[c]}{\cal B}$,  $v_1$, $v_2\in T_xX$, and let $\alpha_i \in \Gamma_{0,c}$, $\tilde v_i\in \Gamma_p^{A_c}$ be horizontal lifts of $a_i$, $v_i$ with respect to the connections $\Gamma_0$, $A_c$ respectively. Denote by $h(\Omega_\A\wedge \Omega_\A)\in A^4({\cal B}\times X)$ the characteristic form obtained from $\Omega_{\A}$ by coupling the wedge square  with $h$. Then:
$$-\frac{1}{2}h(\Omega_\A\wedge \Omega_\A)(a_1,a_2,  v_1,  v_2)=h(\ag_*(\alpha_1)\wedge\ag_*(\alpha_2))(v_1,v_2)-\ \ \ \ \ \ \ \ \ \ \ \ \ \ \ \ \ \ \ \ \ \ \ \ \ \ \ \ \ \  $$
$$\ \ \ \ \ \ \ \ \ \ \ \ \ \ \ \ \ \ \ \ \ \ \ \ \ \ \ \ \ \ \ \ \ \ \ \ \ \ \ \ \ \ \ \ \ \ \ \ \ \ \ \ \ \ -h(u_*\Omega_{\Gamma_0}(\alpha_1,\alpha_2)_p,\Omega_{A_c})(\tilde v_1,\tilde v_2)\ .
$$
\end{co}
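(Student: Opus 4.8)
The plan is to directly substitute the curvature formula (\ref{OmegamathbbA}) into the expression $-\frac{1}{2}h(\Omega_\A\wedge\Omega_\A)$ and expand, keeping careful track of which arguments are horizontal lifts of $X$-directions and which come from $\cal B$-directions. Recall that for a 2-form $\Omega$ on a product, the characteristic form $-\frac{1}{2}h(\Omega\wedge\Omega)$ evaluated on the four vectors $(a_1,a_2,v_1,v_2)$ is an alternating sum of products $h(\Omega(\cdot,\cdot),\Omega(\cdot,\cdot))$ over the three ways of pairing the four arguments. The key structural observation is that $\Omega_\A$, written as in (\ref{OmegamathbbA}), naturally decomposes according to the bidegree with respect to the splitting $T({\cal B}\times X)=T{\cal B}\oplus TX$: the term $\Omega_{A_c}(\tilde v_1,\tilde v_2)$ is of type $(0,2)$ (pure $X$-direction), the mixed terms $\iota_{v_2}\ag_*(\alpha_1)-\iota_{v_1}\ag_*(\alpha_2)$ are of type $(1,1)$, and the term $\{u_*\Omega_{\Gamma_0}(\alpha_1,\alpha_2)\}_p$ is of type $(2,0)$ (pure $\cal B$-direction).

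First I would write $\Omega_\A=\Omega^{02}+\Omega^{11}+\Omega^{20}$ according to this bidegree decomposition. The quantity $-\frac{1}{2}h(\Omega_\A\wedge\Omega_\A)$ is a $4$-form, and I want to evaluate it on $(a_1,a_2,v_1,v_2)$, which has bidegree $(2,2)$. By bidegree counting, only certain products of the pieces can contribute to the $(2,2)$-component: the cross terms between $\Omega^{20}$ and $\Omega^{02}$, and the self-term $\Omega^{11}\wedge\Omega^{11}$. Explicitly, the $(2,2)$-part of $h(\Omega_\A\wedge\Omega_\A)$ is $2h(\Omega^{20}\wedge\Omega^{02})+h(\Omega^{11}\wedge\Omega^{11})$, where the factor $2$ accounts for the two orderings. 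Here the terms involving one factor of $\Omega^{20}$ paired with one factor of $\Omega^{11}$, or $\Omega^{02}$ with $\Omega^{11}$, have the wrong bidegree and drop out, while $\Omega^{20}\wedge\Omega^{20}$ and $\Omega^{02}\wedge\Omega^{02}$ cannot contribute to a form evaluated on two $\cal B$-vectors and two $X$-vectors.

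Next I would evaluate the two surviving contributions on the specific tuple. The term $h(\Omega^{11}\wedge\Omega^{11})$ evaluated on $(a_1,a_2,v_1,v_2)$ produces, after the standard expansion and using that $\Omega^{11}=\iota_v\ag_*(\alpha)$ coupled across the two slots, precisely $h(\ag_*(\alpha_1)\wedge\ag_*(\alpha_2))(v_1,v_2)$ together with the correct combinatorial sign; this is exactly the first term on the right-hand side of the claimed identity. The cross term $h(\Omega^{20}\wedge\Omega^{02})$ evaluated on the same tuple contracts $\Omega^{20}$ on the pair $(a_1,a_2)$, giving $u_*\Omega_{\Gamma_0}(\alpha_1,\alpha_2)_p$, and contracts $\Omega^{02}$ on $(v_1,v_2)$, giving $\Omega_{A_c}(\tilde v_1,\tilde v_2)$; coupling via $h$ and tracking the overall sign from the $-\frac{1}{2}$ prefactor and the antisymmetrization yields the second term $-h(u_*\Omega_{\Gamma_0}(\alpha_1,\alpha_2)_p,\Omega_{A_c})(\tilde v_1,\tilde v_2)$.

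The main obstacle I expect is bookkeeping of signs and combinatorial factors: one must be scrupulous about the conventions relating $h(\Omega\wedge\Omega)$ as a scalar-valued $4$-form to the alternating-sum-over-shuffles definition, about the factor of $2$ from the two orderings of the mixed $(2,0)$-$(0,2)$ product, and about the sign absorbed into the $-\frac{1}{2}$ normalization so that the first term comes out with a $+$ and the second with a $-$. Everything else is a mechanical bidegree-driven expansion that cancels the unwanted terms automatically, so the content of the proof is really the bidegree decomposition plus a correct sign audit.
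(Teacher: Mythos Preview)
Your proposal is correct and follows exactly the approach one finds in the references the paper cites (\cite{DK}, \cite{LT2}): decompose $\Omega_\A$ into its $(2,0)$, $(1,1)$, and $(0,2)$ parts with respect to the splitting $T({\cal B}\times X)=T{\cal B}\oplus TX$, observe that only the $(2,0)\!\cdot\!(0,2)$ cross term and the $(1,1)\!\cdot\!(1,1)$ self-term contribute when evaluated on $(a_1,a_2,v_1,v_2)$, and then read off the two terms of the stated identity. The paper itself gives no further detail beyond the citation, so your write-up is in fact more explicit than the paper's own proof.
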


\begin{proof} See \cite{DK}, \cite{LT2}.
\end{proof}

Note that the term $h(u_*\Omega_{\Gamma_0}(\alpha_1,\alpha_2)_p,\Omega_{A_c})(\tilde v_1,\tilde v_2)$ is independent of the choice of $p$ in the fibre $P_x$, so we can omit the symbol $p$ in this expression  and write $h(u_*(\Omega_\Gamma(\alpha_1,\alpha_2)),\Omega_{A_c})(  v_1, v_2)$. With this convention  the formula above yields the following identity in $A^2(X)$:
\begin{equation}\label{first}
-\resto{\frac{1}{2}h(\Omega_\A\wedge \Omega_\A)(a_1,a_2)}{\{[c]\}\times X}=h(\ag_*(\alpha_1)\wedge\ag_*(\alpha_2))-h(u_*\Omega_{\Gamma_0}(\alpha_1,\alpha_2),\Omega_{A_c})\ .
\end{equation} 
\vspace{4mm}

Let now be $F$ be a differentiable manifold endowed with a left $K$-action and let $E:=P\times_K F$ be the associated bundle with fibre $F$. Let $(\varphi_c)_{c\in {\cal C}}$ be a {\it  ${\cal G}$-equivariant family of sections in $E$ parameterized by our configuration space ${\cal C}$}, i.e., a smooth map $\phi: {\cal C}\to \Gamma(X,E)$ such that 
$$\varphi_{gc}=u(g)_*(\varphi)\  \ \forall g\in{\cal G},\ \forall c\in{\cal C}\ .
$$

Here we denote  by $u(g)_*:\Gamma(X,E)\to \Gamma(X,E)$ the push-forward map associated with $g$. Identifying  sections in $E$ with the corresponding $K$-equivariant maps $P\to F$, we can write
$$u(g)_*(\varphi)=\varphi\circ g^{-1}\ .
$$ 
Note that such a family $\phi$ defines a ${\cal G}$-invariant, $K$-equivariant map ${\cal C}\times P\to F$ given by $(c,p)\mapsto \varphi_c(p)$. This map descends to a $K$-equivariant map $\P={\cal C}\times P/{\cal G}\to F$, i.e.,  to a section $\Phi$ in the universal  $F$-bundle $\E:=\P\times_K F$ over ${\cal B}\times X$, which will be  called {\it the universal section of the family}.

Suppose now that $\omega_F$ is a symplectic form on $F$, the $K$-action on $F$ is symplectic  and admits a   moment map $\mu_F:F\to\kg^\vee$.

By  Proposition \ref{omega-mu} applied to the universal section $\Phi$ and the quotient connection $\A$ on the principal $K$-bundle $\bar\pi: \P\to {\cal B}\times X$ it follows that the basic form
$$
\left\{\Phi^*(\omega_F)-\langle \mu_F\circ \Phi,\Omega_\A\rangle\right\}^h\  $$
 is closed and represents the cohomology class
$$H_K(\bar \pi)^{-1}\circ H_K(\Phi)([\omega_F-\mu_F]_{\rm C})\in H^2_{\rm DR}({\cal B}\times X)\ .
$$
Let $\eta=\eta(\phi,\omega_F,\mu_F)\in Z^2_{\mathrm DR}({\cal B}\times X)$ be the {\it closed} form on ${\cal B}\times X$  which corresponds to this basic form. For  tangent vectors $a_i  \in T_{[c]}{\cal B}$,  horizontal lifts $\alpha_i\in\Gamma_{0,c}$  of $a_i$, and $p\in P$ we have
\begin{equation}\label{eta} \eta_{([c],x)} (a_1,a_2)=\omega_F(\Phi_*(\alpha_1), \Phi_*(\alpha_2))-\langle \mu_F(\varphi_c(p)),u_*\Omega_{\Gamma_0}(\alpha_1,\alpha_2)_p\rangle\ .
\end{equation}

For a fixed point $x\in X$ the term  $\langle\mu(\varphi_c(p)),u_*\Omega_{\Gamma_0}(\alpha_1,\alpha_2)_p\rangle$ is  independent of the choice of $p\in P_x$, so we can omit the symbol $p$ and  interpret the expression $\langle\mu(\varphi_c),u_*\Omega_{\Gamma_0}(\alpha_1,\alpha_2)\rangle$ as a smooth function on $X$. With this convention,   formula (\ref{eta}) above becomes
\begin{equation}\label{second}
\resto{\eta(a_1,a_2)}{\{[c]\}\times X}=\omega_F(\Phi_*(\alpha_1), \Phi_*(\alpha_2))-\langle \mu_F(\varphi_c),u_*\Omega_{\Gamma_0}(\alpha_1,\alpha_2)\rangle \ .
\end{equation}
\\ \\

Now we fix forms $w\in A^{d-2}(X)$, $v\in A^d(X)$,  and a $K$-invariant symmetric bilinear form $h$ on $\kg$. Define   ${\cal G}$-invariant  closed 2-forms  $\omega_v$, $\omega_w\in A^2({\cal C})$  by
$$\omega_w(\alpha_1,\alpha_2):=\int_X h(\ag_*(\alpha_1),\ag_*(\alpha_2))\wedge  w\ ,\ \omega_v(\alpha_1,\alpha_2):=\int_X \omega_F(\Phi_*(\alpha_1), \Phi_*(\alpha_2)) v\ .
$$
The  horizontal projections $\omega_w^h$, $\omega_v^h$ with respect to the connection $\Gamma$ are basic forms on the ${\cal G}$-bundle ${\cal C}\to{\cal B}$, so they can be interpreted as 2-forms on the base  ${\cal B}$ of this bundle. Note that general these forms are not closed.\\

We define  $\mu_{v,w}:{\cal C}\to \mathrm{Lie}({\cal G})^\vee$      by
$$\langle \mu_{v,w}(c),s\rangle:=\int_X h(u_*(s),\Omega_{A_c}\wedge  w)  +\langle \mu_F(\varphi_c), u_*(s) \rangle  v\ .
$$

When $w$ is closed, this map satisfies the axioms of a moment map for the ${\cal G}$-action on ${\cal C}$ with respect to the closed form  $\omega_{v,w}:=\omega_w+\omega_v$ which of course can be  degenerate in general.

Denote by $p_{{\cal B}*}: A^*({\cal B}\times X)\to  A^*({\cal B})$   fibre integration   associated with the projection $p_{\cal B}:{\cal B}\times X\to {\cal B}$.  Using formulae (\ref{first}), (\ref{second}) and the definition of $\mu_{v,w}$ we obtain  the following important identity:
\begin{equation}\label{id}
 \omega_{v,w}^h=p_{{\cal B}*}\left[-\frac{1}{2}[ h(\Omega_\A\wedge\Omega_\A)]\wedge p_X^*(w)+\eta\wedge p_X^*(v)\right]  +p_{{\cal B}^*}\langle \mu_{v,w},\Omega_{\Gamma_0}\rangle \ .\end{equation}

This proves:

\begin{thry} \label{omegahvw} The restriction of  the form  $\omega_{v,w}^h$ to the quotient ${\cal N}:=Z(\mu_{v,w})/{\cal G}$  is closed and is given by 
$$\resto{\omega_{v,w}^h}{{\cal N}}=p_{{\cal N}*}\left[-\frac{1}{2}[h( \Omega_\A\wedge\Omega_\A)]\wedge p_X^*(w)+\eta\wedge p_X^*(v)\right]\ .
$$
\end{thry}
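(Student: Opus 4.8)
The plan is to read the statement directly off the fundamental identity (\ref{id}), which already expresses $\omega_{v,w}^h$ as a fibre-integral term plus a moment-map correction. First I would recall
$$\omega_{v,w}^h=p_{{\cal B}*}\left[-\tfrac{1}{2}[ h(\Omega_\A\wedge\Omega_\A)]\wedge p_X^*(w)+\eta\wedge p_X^*(v)\right]+p_{{\cal B}^*}\langle \mu_{v,w},\Omega_{\Gamma_0}\rangle,$$
and observe that the entire content of the theorem is (i) the vanishing of the correction term on the zero locus and (ii) the closedness of the surviving fibre-integral term; everything else is a matter of descending the resulting forms to the quotient.

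For (i): the pairing $\langle \mu_{v,w},\Omega_{\Gamma_0}\rangle$ is pointwise linear in the value of $\mu_{v,w}$, so along $Z(\mu_{v,w})$ -- where $\mu_{v,w}$ vanishes identically -- the $2$-form $p_{{\cal B}^*}\langle \mu_{v,w},\Omega_{\Gamma_0}\rangle$ restricts to zero. Hence, restricting (\ref{id}) to $Z(\mu_{v,w})$ and then descending to ${\cal N}=Z(\mu_{v,w})/{\cal G}$ (recall $\omega_{v,w}^h$ is $\Gamma$-basic, hence descends to ${\cal B}$), one is left with exactly the fibre-integral term. By naturality of integration along the fibre for the product fibration ${\cal N}\times X\hookrightarrow {\cal B}\times X$ over the closed manifold $X$, restricting $p_{{\cal B}*}[\cdots]$ to ${\cal N}$ coincides with applying $p_{{\cal N}*}$ to the restriction of the integrand, which yields precisely the asserted formula.

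For (ii): I would verify that the integrand $-\tfrac{1}{2}[h(\Omega_\A\wedge\Omega_\A)]\wedge p_X^*(w)+\eta\wedge p_X^*(v)$ is a closed form on ${\cal B}\times X$. The characteristic form $h(\Omega_\A\wedge\Omega_\A)$ is $d$-closed by the Chern-Weil argument (the Bianchi identity together with the $\ad$-invariance of $h$), as recorded in the Remark following Proposition \ref{inverse}; the form $\eta$ is closed by Proposition \ref{omega-mu}(2); $p_X^*(w)$ is closed because we work under the hypothesis that $w$ is closed (the assumption under which $\mu_{v,w}$ is a genuine moment map); and $p_X^*(v)$ is closed because $v$ is a top-degree form on the $d$-manifold $X$. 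Since $X$ is closed, fibre integration commutes with $d$ with no boundary contribution, so $d\,p_{{\cal B}*}[\cdots]=p_{{\cal B}*}(d[\cdots])=0$; the fibre-integral term is therefore closed on ${\cal B}$, and a fortiori its restriction to ${\cal N}$ is closed. This establishes both assertions.

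The genuinely substantive work has already been carried out in deriving (\ref{id}) from Proposition \ref{CurvProp}, Proposition \ref{omega-mu} and formulae (\ref{first})--(\ref{second}); relative to that, what remains is the standard symplectic-reduction observation that passing to the zero level of the moment map annihilates the non-horizontal correction term. The one point that demands care -- and which I would flag as the main obstacle -- is the infinite-dimensionality of ${\cal C}$ and ${\cal G}$: one must guarantee that $Z(\mu_{v,w})$ is a bona fide (Banach/Fréchet) submanifold cut out smoothly by $\mu_{v,w}$, that ${\cal N}$ inherits a manifold structure making ${\cal C}\to{\cal B}$ and its restriction principal ${\cal G}$-bundles, and that fibre integration over the finite-dimensional $X$ combined with horizontal projection behaves functorially in this setting. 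Granting the analytic framework already in force in \cite{LT2}, the formal computation above completes the proof.
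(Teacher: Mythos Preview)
Your proof is correct and follows exactly the paper's approach: the paper presents Theorem \ref{omegahvw} as an immediate consequence of identity (\ref{id}), and you have simply spelled out the two implicit steps (vanishing of the moment-map correction on $Z(\mu_{v,w})$, and closedness of the fibre-integral term). Your observation that the closedness argument uses $dw=0$ is well taken---the theorem statement does not make this hypothesis explicit, but it is needed for the integrand to be closed, and it holds in all the applications (where $w=\omega_g^{n-1}$ on a K\"ahler manifold).
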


By Corollary \ref{exact} we know that the form $\eta$ is exact when the moment map  $\mu_F$ is exact. Hence:
\begin{co}  \label{omegahvwexact} Suppose  $w$ is closed,  $\mu_F=\mu_0 +  \tau$, where $\mu_0$ is exact and  $\tau\in \kg^\vee$ is $K$-invariant. The one   has 
$$[\resto{\omega_{v,w}^h}{{\cal N}}]=p_{{\cal N}*}\left[ -\frac{1}{2} [h(\Omega_\A\wedge \Omega_\A] \cup  p_X^*[w]- [\tau(\Omega_\A)]\cup   p_X^*[v]\right] \  
$$
in $H^2_{\rm DR}({\cal N})$.
\end{co}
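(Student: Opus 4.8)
The plan is to pass to de Rham cohomology in Theorem~\ref{omegahvw} and to identify the class $[\eta]$ using the hypothesis $\mu_F=\mu_0+\tau$. First I would note that fibre integration $p_{{\cal N}*}$ commutes with the exterior differential (up to sign), so it descends to cohomology; hence, provided each factor of the integrand is closed, the wedge products pass to cup products and each form may be replaced by its class. The characteristic form $h(\Omega_\A\wedge\Omega_\A)$ is closed by Chern--Weil (Bianchi's identity, as in the proof of Proposition~\ref{inverse}); $p_X^*(w)$ is closed because $w$ is assumed closed; $\eta$ is closed by Theorem~\ref{omegahvw}; and $p_X^*(v)$ is automatically closed, since $v\in A^d(X)$ is of top degree on the $d$-dimensional manifold $X$. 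Thus taking classes in the formula of Theorem~\ref{omegahvw} is legitimate, and it only remains to show that $[\eta]=-[\tau(\Omega_\A)]$ in $H^2_{\rm DR}({\cal N}\times X)$.

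For this I would use that $\eta$ is, by definition (Proposition~\ref{omega-mu} applied to the universal section $\Phi$ and the quotient connection $\A$), the form corresponding to the basic $2$-form $\{\Phi^*(\omega_F)-\langle\mu_F\circ\Phi,\Omega_\A\rangle\}^h$ on $\P$. This expression is $\R$-linear in $\mu_F$, and since the curvature $\Omega_\A$ is already horizontal, substituting $\mu_F=\mu_0+\tau$ splits it as $\eta=\eta_0-\tau(\Omega_\A)$, where $\eta_0=\{\Phi^*(\omega_F)-\langle\mu_0\circ\Phi,\Omega_\A\rangle\}^h$ corresponds to the \emph{exact} moment map $\mu_0$, and $\tau(\Omega_\A)$ is the closed Chern--Weil form attached to the $K$-invariant linear form $\tau\in\kg^\vee$ (cf. the Remark following Proposition~\ref{inverse}). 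Writing $\mu_0=\mu_\theta$ for a $K$-invariant primitive $\theta$ of $\omega_F$, Corollary~\ref{exact} gives $\eta_0=d\big(\Phi^*(\theta)^h\big)$, so $[\eta_0]=0$ and therefore $[\eta]=-[\tau(\Omega_\A)]$.

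Substituting this into the cohomological form of Theorem~\ref{omegahvw} yields exactly the asserted identity. Equivalently, and perhaps more conceptually, one may run the whole argument in the Cartan model: in $H^2_K(F)$ one has $[\omega_F-\mu_F]_{\rm C}=[\omega_F-\mu_0]_{\rm C}-[\tau]_{\rm C}=-[\tau]_{\rm C}$, the first class vanishing by~(\ref{exact-omega-mu}) because $\mu_0$ is exact; applying $H_K(\bar\pi)^{-1}\circ H_K(\Phi)$ and using Proposition~\ref{inverse} to compute the pushforward of the constant invariant polynomial $\tau$ as $\tau(\Omega_\A)^h$ reproduces $[\eta]=-[\tau(\Omega_\A)]$.

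The routine parts---Chern--Weil closedness and the compatibility of fibre integration with $d$---are standard; the only point requiring care is the bookkeeping that isolates the exact part $\eta_0$ from the $\tau$-part and confirms that the latter is genuinely the closed characteristic form $\tau(\Omega_\A)$ rather than merely a horizontal representative. This is precisely where the $K$-invariance of $\tau$ enters: it guarantees both that $\tau(\Omega_\A)$ descends to a well-defined closed $2$-form on the base and that it is $[\omega_F-\mu_0]_{\rm C}$ (not $[\omega_F-\mu_F]_{\rm C}$) that is killed by exactness. I would therefore single this step out as the main, if modest, obstacle.
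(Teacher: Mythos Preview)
Your proposal is correct and follows precisely the approach the paper has in mind: the paper itself does not write out a proof, merely noting before the statement that by Corollary~\ref{exact} the form $\eta$ is exact when $\mu_F$ is exact, and leaving the reader to split $\eta=\eta_0-\tau(\Omega_\A)$ and pass to cohomology in Theorem~\ref{omegahvw}. Your write-up is a faithful, more explicit expansion of that one-line hint, including the routine verifications (closedness of the factors, compatibility of fibre integration with $d$) that the paper suppresses.
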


Note that each term of the right hand side of this formula can be written as the slant product of a characteristic class of the bundle $\P$ with a homology class of   $X$.

\section{Applications}\label{Applications}

\subsection{Kähler forms of   canonical Hermitian metrics  and tautological classes}
\label{KF}

\subsubsection{The general setting}

We now apply the results obtained in section \ref{families} to the  families of connections and sections associated with the universal gauge theoretical problem introduced in section \ref{Intro}. We will obtain the main theorems Theorem \ref{KformGen}  and Theorem \ref{Kform} stated in the introduction as corollaries of Theorem \ref{omegahvw} above.\\

Let $r:  K\to  K_0$ be an epimorphism of compact Lie groups with kernel   $N$. Fix a $K$-invariant inner product $k$ on $\kg$ and denote by $p_\ng:\kg\to\ng$ the $k$-orthogonal  projection onto the Lie algebra $\ng$ of $N$. The symmetric bilinear form  $h:\kg\times\kg\to\R$ defined by
$$h(x,y):=k(p_\ng (x),p_\ng(y))
$$
is $K$-invariant and non-degenerate on $\ng$. Suppose we are given a left  $K$-action on a Kähler manifold $(F,J_F,g_F)$  by holomorphic isometries,  and  a moment map $\mu_F$ for this action on the corresponding symplectic manifold $(F,\omega_F)$. We denote by $m_F:F\to  \kg$ the $ \kg$-valued map defined by $\mu_F=k(m_F,\cdot)$. 

Let now $(X,J,g)$ be a  $n$-dimensional  Kähler manifold,  $\pi: P\to X$ a   principal $  K$-bundle on $X$ and $P_0:=  P\times_K   K_0$ the associated $K_0$-bundle.  We fix a connection $A_0\in{\cal A}( P_0)$ of type $(1,1)$ (our ``orientation data").

Let   $E:=  P\times_{K} F$ be the associated bundle with fiber $F$. The gauge group of our moduli problem is 
$${\cal G}:=\Gamma (X,  P\times_{K} N)$$
 and acts  from the left on the space of $A_0$-oriented pairs ${\cal A}_{A_0}(  P)\times \Gamma(X,E)$. This space has a natural Kähler metric depending on the triple $(k,g,g_F)$ whose Kähler form is given by the formula (\cite{LT2} p. 66):
  $$\Omega((\alpha,\psi),(\beta,\chi):=\int_X h(\alpha\wedge\beta)\wedge\omega_g^{n-1}+(n-1)!\int_X \omega_F(\psi,\chi)\vol_g\  .
  $$

In order to avoid technical problems we will concentrate  on the configuration space   of irreducible oriented pairs. Let ${\cal C}^*:=[{\cal A}_{A_0}(P)\times \Gamma(X,E)]^*$ be  the  open subspace of  irreducible oriented pairs , i.e., of pairs 
$(A,\varphi)\in {\cal A}_{A_0}(P)\times \Gamma(X,E)$ with trivial stabilizer with respect to the ${\cal G}$-action.    After suitable Sobolev completions the projection ${\cal C}^*\to {\cal B}^*:={\cal C}^*/{\cal G}$ becomes a principal ${\cal G}$-bundle. We endow this bundle with the connection $\Gamma_0$ defined by $L^2$-orthogonality  to the ${\cal G}$-orbits\footnote{The construction can be extended to the non-Kählerian framework and yields a strongly KT Hermitian metric on the moduli space, but one has to use a different connection $\Gamma_0$ \cite{LT1}, \cite{LT2}.  } \cite{LT2}.

Note that we have a  tautological family $(A_c)_{c\in {\cal C}^*}$ of connections, and  a  tautological  family   $(\varphi_c)_{c\in {\cal C}^*}$ of sections parameterized by ${\cal C}^*$.  The corresponding maps $\ag$, $\phi$ are given by the projections onto the two factors.\\

The gauge theoretical   moduli space  ${\cal M}^*$ corresponding to the data above is the space of  equivalence classes of pairs $(A,\varphi)\in{\cal C}^*$ satisfying the generalized  vortex  equations
$$\left\{
\begin{array}{ccc}
F_A^{02}&=&0\ \ \ \\\ 
\varphi  \hbox{ is }  A\hbox{-holomorphic }\\
p_{\ng}\left[  \Lambda_g  F_A+ m_F(\varphi)\right] &=&0\ .
\end{array}
\right. \eqno{(V)}
$$
In the third equation  we used the same symbol  for the bundle epimorphism   $P\times_{ K}  \kg\to P\times_{  K}\ng$ induced by $p_\ng:\kg\to\ng$. \\

 Taking $w=\omega_g^{n-1}$ and $v=(n-1)! \vol_g=\frac{1}{n} \omega_g^n$, we  obtain for $s\in\mathrm{Lie}({\cal G})=A^0(P\times_K\ng)$
$$\langle \mu_{v,w}(A,\varphi)),s\rangle=\int_X h(s,F_A\wedge\omega_g^{n-1})+ (n-1)! \langle \mu_F(\varphi),  s\rangle \vol_g$$
$$\ \ \ \ \ \ \ \ \ \ \ \ \ \ \ \ \ \ \ \ \ \ \ \ \ \ \ =\int_X h(s,F_A\wedge\omega_g^{n-1})+ (n-1)!  k(s,m_F(\varphi))\vol_g
$$
$$\ \ \ \ \ \ \ \ \ \ \ \  \  \  \ \ \ \ \ \ \ \ \ \ \ \ \ \ \ \ \ =(n-1)! \int_X  \left\{h(s,p_\ng \Lambda_g F_A)+ k(s, p_\ng(m_F(\varphi))\right\}\vol_g $$
$$\ \ \ \ \ \ \ \ \ \ \ \ \  \ \ \   \ \ \ \ \ \ \ \ \ \ \ \ \ \ \  =(n-1)! \int_X  \left\{h(s, p_\ng \Lambda_g F_A)+ h(s,p_\ng(m_F(\varphi))\right\}\vol_g $$
$$\ \ \ \ \ \ \ \ \ \ \ \ \ \ \ \ \ \ \ =(n-1)! \int_X  h(s, p_\ng (\Lambda_g F_A+m_F(\varphi)) ) \vol_g .
$$
This shows that the third equation in $(V)$ is equivalent to the vanishing of the moment map $\mu_{v,w}$ introduced above. On the other hand, using the fact that  the restriction of $h$ to $\ng\times\ng$ is an inner product one can prove easily that the  restriction of the form $\omega^h_{v,w}$ on ${\cal M}^*$ is non-degenerate. This form coincides with  the Kähler form of the standard Hermitian metric $g_{{\cal M}^*}$ on ${\cal M}^*$ associated with the pair $(\resto{h}{\ng\times\ng},g_F)$ (see \cite{LT2}).
Therefore, using Theorem \ref{omegahvw} and Corollary \ref{omegahvwexact}   we obtain
 
\begin{thry} \label{KformGen} The canonical Hermitian metric on ${\cal M}^*$ is Kähler and its Kähler form is
$$\omega_{\cal M^*}=p_{{\cal M}^*,*}\left[-\frac{1}{2}[ h(\Omega_\A\wedge\Omega_\A)]\wedge p_X^*(\omega_g^{n-1})+(n-1)!\eta\wedge p_X^*(\vol_g)\right]\ ,
$$
with $\eta:=\left\{\Phi^*(\omega_F)-\langle \mu_F\circ\Phi,\Omega_\A\rangle\right\}^h$, which is a closed 2-form. If $\mu_F$ can be written as $\mu_F=\mu_0+\tau$, where $\mu_0$ is an exact moment map and  $\tau\in   \kg^\vee$ is   $K$-invariant, then the de Rham cohomology class $[\omega_{\cal M^*}]_{\mathrm {DR}}$ is given by
$$[ \omega_{\cal M^*}]_{\mathrm {DR}}=p_{{\cal M}^*,*}\left[ -\frac{1}{2} [h(\Omega_\A\wedge \Omega_\A)] \cup  p_X^*[\omega_g]^{n-1}-(n-1)! [\tau(\Omega_\A)]\cup   p_X^*[\vol_g]\right] \ . 
$$
\end{thry}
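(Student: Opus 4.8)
The plan is to read off both assertions as specializations of Theorem~\ref{omegahvw} and Corollary~\ref{omegahvwexact}, applied to the tautological families $\ag$, $\phi$ on the configuration space ${\cal C}^*$ of irreducible oriented pairs. Concretely, I would fix the two forms on $X$ that match the ambient Kähler metric $g$ (recall $\dim_\R X=2n$), namely $w:=\omega_g^{n-1}\in A^{2n-2}(X)$ and $v:=(n-1)!\,\vol_g=\frac{1}{n}\omega_g^n\in A^{2n}(X)$, and take $h(x,y)=k(p_\ng(x),p_\ng(y))$ as in the statement. With these choices, the chain of equalities displayed immediately before the theorem evaluates the moment map $\mu_{v,w}$ of section~\ref{families} and shows that its zero set consists exactly of the pairs satisfying the third equation $p_\ng[\Lambda_g F_A+m_F(\varphi)]=0$ of the vortex system $(V)$.

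The key structural point I would use is that ${\cal M}^*$ sits inside the reduced space ${\cal N}=Z(\mu_{v,w})/{\cal G}$ of Theorem~\ref{omegahvw} as the sublocus cut out by the two remaining equations $F_A^{02}=0$ and $A$-holomorphicity of $\varphi$. Since these are closed conditions, the restriction of the closed form $\resto{\omega_{v,w}^h}{{\cal N}}$ to ${\cal M}^*$ is again closed; and because the universal bundle $\P$, its quotient connection $\A$ with curvature $\Omega_\A$, and the universal section $\Phi$ are all defined over ${\cal B}^*\times X$ (hence restrict to ${\cal M}^*\times X$), the fibre-integration identity of Theorem~\ref{omegahvw} transports verbatim with $p_{{\cal N}*}$ replaced by $p_{{\cal M}^*,*}$ — the compatibility being simply that integration over the $X$-factor commutes with restriction of the base along ${\cal M}^*\hookrightarrow{\cal N}$. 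Substituting $w=\omega_g^{n-1}$, $v=(n-1)!\,\vol_g$ and pulling the constant $(n-1)!$ out of the second term produces precisely the stated form-level formula for $\omega_{\cal M^*}$, with $\eta=\{\Phi^*(\omega_F)-\langle\mu_F\circ\Phi,\Omega_\A\rangle\}^h$ closed by Proposition~\ref{omega-mu}. Invoking the identification, recalled from \cite{LT2} just before the statement, of $\resto{\omega_{v,w}^h}{{\cal M}^*}$ with the fundamental $2$-form of the canonical Hermitian metric $g_{{\cal M}^*}$ attached to $(\resto{h}{\ng\times\ng},g_F)$, the closedness just obtained shows that this metric is Kähler.

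For the cohomological refinement I would verify the two hypotheses of Corollary~\ref{omegahvwexact}: first, $w=\omega_g^{n-1}$ is closed because $g$ is Kähler, so $d\omega_g=0$; second, the assumption $\mu_F=\mu_0+\tau$ with $\mu_0$ exact and $\tau\in\kg^\vee$ a $K$-invariant linear form is exactly the hypothesis imposed in the theorem. Feeding the same $v,w$ into the displayed formula of that corollary, and using $p_X^*[\omega_g^{n-1}]=p_X^*[\omega_g]^{n-1}$, yields the asserted expression for $[\omega_{\cal M^*}]_{\mathrm{DR}}$. The mechanism behind the disappearance of $\mu_0$ is Corollary~\ref{exact}: the exact part contributes an exact $2$-form to $\eta$, so that in $H^2_{\mathrm{DR}}({\cal M}^*)$ only the invariant linear part $\tau$ survives, producing the characteristic class $[\tau(\Omega_\A)]$.

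I expect the only genuinely delicate ingredients to be imported rather than proved in this step: the identification of $\resto{\omega_{v,w}^h}{{\cal M}^*}$ with the Hermitian fundamental form of $g_{{\cal M}^*}$, together with its non-degeneracy, which rest on the analytic framework of \cite{LT2} and on the fact that $\resto{h}{\ng\times\ng}$ is a genuine inner product. Once these are granted, the entire argument reduces to the formal substitution $(w,v)=(\omega_g^{n-1},(n-1)!\,\vol_g)$ into the two previously established results, Theorem~\ref{omegahvw} and Corollary~\ref{omegahvwexact}.
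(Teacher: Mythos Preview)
Your proposal is correct and follows essentially the same approach as the paper: the authors derive Theorem~\ref{KformGen} by specializing Theorem~\ref{omegahvw} and Corollary~\ref{omegahvwexact} with $w=\omega_g^{n-1}$, $v=(n-1)!\,\vol_g$, after verifying (in the displayed computation preceding the theorem) that $\mu_{v,w}=0$ is equivalent to the third vortex equation, and importing from \cite{LT2} the identification of $\resto{\omega_{v,w}^h}{{\cal M}^*}$ with the fundamental form of $g_{{\cal M}^*}$. Your remark that ${\cal M}^*\subset{\cal N}$ via the first two equations of $(V)$ and that fibre integration over $X$ commutes with this restriction is exactly the mechanism the paper uses implicitly.
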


Note that, by formula (\ref{second})  the term  $\tau(\Omega_\A)$ coincides with $\tau(\Omega_{\Gamma_0})$, hence this term depends only on the restriction $\resto{\tau}{\ng}$.

\begin{thry}  \label{Kform}
Let  $F$ be a Hermitian vector space, and  $\rho: K\to \U(F)$ a unitary representation.  For a $ K$-invariant linear form $\tau\in  \kg^\vee$ put  $\mu_F:=\mu_0+\tau$, where $\mu_0$ is the standard moment map for the $  K$-action on $F$.

Suppose  there exists a $K$-invariant element $a_0\in \ng$ such that $\rho_*(a_0)=i \id_F$. Then the Kähler form $\omega_{\cal M^*}$ is given by

$$ \omega_{\cal M^*}=p_{{\cal M}^*,*}\left[ -\frac{1}{2}  h(\Omega_\A\wedge \Omega_\A)  \wedge  p_X^*(\omega_g)^{n-1}-(n-1)!  \tau(\Omega_\A) \wedge   p_X^*(\vol_g)\right] \ ,
$$ 
hence it  coincides with fiber integration of a characteristic form  of the connection $\A$ on the universal bundle $\P$. \end{thry}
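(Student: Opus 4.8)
The plan is to deduce Theorem \ref{Kform} from Theorem \ref{KformGen} by upgrading the agreement of the two candidate expressions for $\omega_{\cal M^*}$ from cohomology to forms. Comparing the two formulae, the $h(\Omega_\A\wedge\Omega_\A)$-terms are the same characteristic form, so everything reduces to the identity
$$p_{{\cal M}^*,*}\left[(\eta+\tau(\Omega_\A))\wedge p_X^*(\vol_g)\right]=0$$
of $2$-forms on ${\cal M}^*$. Since $\mu_F=\mu_0+\tau$ with $\tau$ constant and $K$-invariant and $\Omega_\A$ horizontal, one has $\{\langle\tau,\Omega_\A\rangle\}^h=\tau(\Omega_\A)$, so $\eta+\tau(\Omega_\A)=\{\Phi^*(\omega_F)-\langle\mu_0\circ\Phi,\Omega_\A\rangle\}^h$. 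Over ${\cal M}^*$ the third vortex equation together with the Kobayashi–Hitchin complex structure guarantee that the universal connection $\A$ is of type $(1,1)$ and that the universal section $\Phi$ is $\A$-holomorphic; hence I may apply Corollary \ref{ddc} with $\nu=\nu_F$ and $\mu_\nu=\mu_0$ to obtain
$$\eta+\tau(\Omega_\A)=\tfrac14\,dd^c\big(\nu_F(\Phi)\big),$$
an exact form built from the universal Kähler potential $\nu_F(\Phi)\in A^0({\cal M}^*\times X)$.

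Next I would exploit the product structure of ${\cal M}^*\times X$. Splitting $d=d_{\cal M}+d_X$ and $d^c=d^c_{\cal M}+d^c_X$, only the component $d_{\cal M}d^c_{\cal M}(\nu_F(\Phi))$ has vanishing $X$-degree, so it is the only term surviving the wedge with the top-degree form $p_X^*(\vol_g)$ and the subsequent fibre integration. Since $d_{\cal M}$ and $d^c_{\cal M}$ commute with integration over the factor $X$, this yields
$$p_{{\cal M}^*,*}\left[(\eta+\tau(\Omega_\A))\wedge p_X^*(\vol_g)\right]=\tfrac14\,d_{\cal M}d^c_{\cal M}\Big[\int_X\nu_F(\Phi)\,\vol_g\Big].$$
Thus it suffices to prove that the function $[c]\mapsto\int_X\nu_F(\varphi_c)\,\vol_g$ is \emph{constant} on ${\cal M}^*$.

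This constancy is the heart of the argument and the step where the strengthened hypothesis $a_0\in\ng$ (rather than merely $a_0\in\kg$, as in Definition \ref{homotheties}) is essential. Using identity (\ref{a0}) I rewrite $\nu_F(\varphi_c)=-2\,k(m_0(\varphi_c),a_0)$; because $a_0\in\ng$ I may insert the orthogonal projection $p_\ng$ and substitute the third vortex equation $p_\ng[\Lambda_g F_{A_c}+m_F(\varphi_c)]=0$. Writing $m_F=m_0+m_\tau$ with $m_\tau$ the constant dual of $\tau$, this gives $\nu_F(\varphi_c)=2\,k(\Lambda_g F_{A_c},a_0)+2\,\tau(a_0)$. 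Integrating and using $\Lambda_g[k(a_0,F_{A_c})]\,\vol_g=\tfrac{1}{(n-1)!}k(a_0,F_{A_c})\wedge\omega_g^{n-1}$ reduces the integral to $\tfrac{2}{(n-1)!}\int_X k(a_0,F_{A_c})\wedge\omega_g^{n-1}+2\tau(a_0)\Vol_g(X)$. The form $k(a_0,F_{A_c})$ is closed with cohomology class independent of $c$ by Chern–Weil theory (as $a_0$ is $\ad$-invariant), and $\omega_g^{n-1}$ is closed, so the integral is a topological constant. Hence $\int_X\nu_F(\Phi)\,\vol_g$ is constant on ${\cal M}^*$, its $d_{\cal M}d^c_{\cal M}$ vanishes, and the desired equality of forms follows. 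The main obstacle I anticipate is not this computation but the justification that the universal connection is genuinely of type $(1,1)$ and that $\Phi$ is genuinely $\A$-holomorphic over the \emph{whole} moduli space — in the ${\cal M}^*$-directions, not merely fibrewise over $X$ — which is where one must invoke the complex-analytic description of ${\cal M}^*$ furnished by the Kobayashi–Hitchin correspondence of \cite{LT2}.
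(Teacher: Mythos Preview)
Your proof is correct and follows essentially the same route as the paper: reduce to showing $p_{{\cal M}^*,*}(\eta_0\wedge p_X^*(\vol_g))=0$ with $\eta_0=\{\Phi^*(\omega_F)-\langle\mu_0\circ\Phi,\Omega_\A\rangle\}^h$, apply Corollary~\ref{ddc} to write $\eta_0=\tfrac14 dd^c|\Phi|^2$, commute $dd^c$ past the fibre integration, and prove constancy of $\int_X|\varphi|^2\vol_g$ by pairing the third vortex equation with the invariant section $\tilde a_0$ and invoking Chern--Weil. Your presentation of the commutation step via the bidegree splitting $d=d_{\cal M}+d_X$ is equivalent to the paper's one-line appeal to commutation of $d,d^c$ with fibre integration along a holomorphic map, and your flagged concern about the $(1,1)$-type of $\A$ and the $\A$-holomorphicity of $\Phi$ in the ${\cal M}^*$-direction is indeed the one nontrivial background input, which the paper absorbs into the framework of \cite{LT2}.
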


\begin{proof}

Since  $\mu_F=\mu_0+\tau$ and the form $\eta$ depends linearly on $\mu_F$ one has
$$ \omega_{\cal M^*}-p_{{\cal M}^*,*}\left[ -\frac{1}{2}  h(\Omega_\A\wedge \Omega_\A)  \wedge  p_X^*(\omega_g)^{n-1}-(n-1)!  \tau(\Omega_\A) \wedge   p_X^*(\vol_g)\right]  $$
$$=(n-1)! p_{{\cal M}^*,*}\left(\eta_0\wedge p_X^*(\vol_g) \right)\ ,
$$
where
$$\eta_0=\left\{\Phi^*(\omega_F)-\langle \mu_0\circ\Phi,\Omega_\A\rangle\right\}^h \ .
$$

By Corollary \ref{ddc} we obtain on ${\cal B}^*\times X$ the identity
$$\eta_0=\frac{1}{4}dd^c | \Phi|^2\ ,
$$
where the universal section $\Phi$ has been regarded as  a section in the Hermitian vector bundle $\E:=\P\times_{K} F$  on ${\cal B}^*\times X$. Therefore it suffices to prove that 
$$p_{{\cal M}^*,*} (dd^c | \Phi|^2\wedge p_X^*(\vol_g))=0\ .$$
Since the operators  $d$, $d^c$ commute with the fibre integration associated with a {\it holomorphic} map we have
$$p_{{\cal M}^*,*} (dd^c | \Phi|^2\wedge p_X^*(\vol_g))=dd^c\left\{ p_{{\cal M}^*,*}   (| \Phi|^2\wedge p_X^*(\vol_g))\right\} \ .
$$
 Since   $a_0\in\ng$ is $K$-invariant by assumption, it defines a section $\tilde a_0\in A^0(P\times_K\ng)$. Taking the $L^2$ inner product of   the third equation in $(V)$ with $\tilde a_0$ we compute
$$0=\langle\Lambda F_A,\tilde a_0\rangle_{L^2}+ \langle m_0(\varphi),\tilde a_0\rangle_{L^2}+ \tau(a_0)\Vol_g\ .
$$
The first term is a characteristic number of $P$, so is a constant.  On the other hand by  formula  (\ref{a0}) we obtain:
$$\langle m_0(\varphi),\tilde a_0\rangle_{L^2}=-\frac{1}{2}\int_X |\varphi|^2 \vol_g\ .
$$
   This implies that the map 
$$[A,\varphi]\mapsto \int_X | \varphi|^2
$$
is constant on ${\cal M}$. But this map coincides with $p_{{\cal M}^*,*}  ( |\Phi|^2\wedge p_X^*(\vol_g))$.
 \end{proof}
\subsubsection{Vortex moduli spaces} Let $E$,  $E_0$ be Hermitian vector bundles on $X$ of ranks $r$,   $r_0$, $P_E$, $P_{E_0}$ the associated frame bundles, and $t\in\R$. Fix a Hermitian connection $A_0$ of type $(1,1)$ on $E_0$, and let  ${\cal M}_t={\cal M}_t(E, E_0,A_0)$ be  the  moduli space   of pairs $(B,s)\in {\cal A}(E)\times A^0(\Hom(E, E_0))$ solving the $t$-vortex equation:
$$\left\{
\begin{array}{ccc}
F_B^{02}&=&0\\
\bar\partial_{B,A_0} s&=&0\\
i\Lambda F_{A}-\frac{1}{2} s^*\circ s + t\id_E&=&0
\end{array}\right. \eqno{(V_t(E,E_0,A_0))}
$$
 modulo the gauge group ${\cal G}:=\Gamma(X,\U(E))$.  
This moduli problem can be obtained as a special case of  the universal moduli problem $(V)$ considered in Theorem \ref{Kform} by taking $K=\U(r)\times \U(r_0)$, $N=\U(r)$, $F:= \Hom(\C^r,\C^{r_0})$ with the obvious unitary representation $\rho:K\to \U(F)$, with bundle  $P:=P_E\times_X P_{E_0}$, and  moment map
$$m_F(f)=(\frac{i}{2} f^*\circ f -it\id_{\C^r},-\frac{i}{2} f\circ f^* )\ .$$  

Note that the space ${\cal A}_{A_0}(P)$ can be identified with the space ${\cal A}(E)$ of Hermitian connections on $E$. We put ${\cal C}^*:=[{\cal A}(E)\times A^0(\Hom(E, E_0))]^*$ and use the quotient construction to define a universal connection $\B$ on the universal vector bundle 
$$\E:=\qmod{{\cal C}^*\times E}{{\cal G}}$$
 over ${\cal B}^*\times X$.

We use the standard inner product   $k((a,a_0),(b,b_0))=-\tr(ab)-\tr(a_0b_0)$ on   $\u(r)\oplus\u(r_0)$. With this choice we get $h((a,a_0),(b,b_0))=-\tr(ab)$ and $\mu_F=\mu_0+\tau$, where $\tau(a,a_0)=it \tr(a)$. This yields
$$  \tau(\Omega_\A)=2\pi tc_1(\B)\ ,\ h(\Omega_\A\wedge \Omega_\A)=-\tr(\Omega_\B\wedge\Omega_\B)=4\pi^2(c_1(\B)^2-2c_2(\B))=8\pi^2 \mathrm{ch}_2 \ ,
$$
where $\mathrm{ch}_2:=\frac{1}{2} c_1^2-c_2$ stands for the second component of the Chern character. Theorem \ref{Kform} gives

\begin{pr} \label{KFV} The  natural Kähler form on the moduli space 
$${\cal M}^*_t={\cal M}^*_t(E, E_0,A_0)$$
 of irreducible solutions of $(V_t)$ is given by
\begin{equation}\label{Kformvortex}  \omega_{\cal M^*_t}=p_{{\cal M}^*_t,*}\left[-4\pi^2\mathrm{ch}_2(\B)  \wedge  p_X^*(\omega_g^{n-1})-2t\pi(n-1)!  c_1(\B) \wedge   p_X^*(\vol_g)\right]   , 
\end{equation}
where $\B$  denotes the universal connection on the universal bundle $\E:= {{\cal C}^*\times E}/{{\cal G}}$ on ${\cal M}^*_t\times X$.

\end{pr}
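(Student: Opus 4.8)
The plan is to realize the $t$-vortex problem $(V_t(E,E_0,A_0))$ as an instance of the universal vortex problem $(V)$ and then apply Theorem \ref{Kform} verbatim. First I would record the dictionary already fixed above: $K=\U(r)\times\U(r_0)$, $N=\U(r)$, $F=\Hom(\C^r,\C^{r_0})$ with the representation $\rho(g,h)f=hfg^{-1}$, the bundle $P=P_E\times_X P_{E_0}$, and the moment map $m_F$. One then checks that the third equation of $(V_t)$ is exactly the $p_\ng$-component of $\Lambda_g F_A+m_F(\varphi)=0$, so that the two solution sets coincide; this is the specialization to the present $(k,N,\rho)$ of the moment-map computation carried out for the general case at the start of section \ref{KF}.

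The hypothesis that actually has to be verified is that $\rho$ contains the homotheties of $F$ in the sense of Definition \ref{homotheties}, with the homothety element lying in $\ng$ rather than merely in $\kg$. The infinitesimal action of $(a,b)\in\u(r)\oplus\u(r_0)$ on $f\in F$ is $f\mapsto bf-fa$, so $a_0:=(-i\id_{\C^r},0)$ satisfies $\rho_*(a_0)f=if$, i.e. $\rho_*(a_0)=i\id_F$. Since $-i\id_{\C^r}$ is central in $\u(r)$ it is $K$-invariant, and it sits in $\ng=\u(r)$, exactly as required. This is the step that licenses the use of Theorem \ref{Kform}, an identity of \emph{forms}, rather than only its de Rham-class weakening Theorem \ref{KformGen}.

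It then remains to translate the two characteristic forms in Theorem \ref{Kform} into Chern-Weil representatives of $c_1(\B)$ and $\ch_2(\B)$. From $k((a,a_0),(b,b_0))=-\tr(ab)-\tr(a_0b_0)$ and $p_\ng(a,a_0)=(a,0)$ one reads off $h((a,a_0),(b,b_0))=-\tr(ab)$ and $\tau(a,a_0)=it\,\tr(a)$, whence $h(\Omega_\A\wedge\Omega_\A)=-\tr(\Omega_\B\wedge\Omega_\B)$ and $\tau(\Omega_\A)=it\,\tr(\Omega_\B)$, where $\Omega_\B$ is the $\u(r)$-component of $\Omega_\A$, i.e. the curvature of the universal connection on $\E$. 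The only point demanding care is the normalization bookkeeping: using $c_1(\B)=[\tfrac{i}{2\pi}\tr\Omega_\B]$ and $\ch_2(\B)=[-\tfrac{1}{8\pi^2}\tr(\Omega_\B\wedge\Omega_\B)]$ gives $\tau(\Omega_\A)=2\pi t\,c_1(\B)$ and $h(\Omega_\A\wedge\Omega_\A)=8\pi^2\,\ch_2(\B)$, so that the coefficients in Theorem \ref{Kform} collapse to $-\tfrac12 h(\Omega_\A\wedge\Omega_\A)=-4\pi^2\ch_2(\B)$ and $-(n-1)!\,\tau(\Omega_\A)=-2t\pi(n-1)!\,c_1(\B)$, which is precisely (\ref{Kformvortex}). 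I expect no genuine obstacle: all the difficulty sits upstream in Theorem \ref{Kform}, and the one thing that can go wrong here is a stray sign or factor of $2\pi$ in the Chern-Weil conversions, so I would fix conventions once and carry them through both forms in parallel.
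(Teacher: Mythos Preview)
Your proposal is correct and follows precisely the paper's approach: the paragraph preceding Proposition \ref{KFV} in the paper sets up the same dictionary $(K,N,F,\rho,P,m_F,k)$, computes $h$ and $\tau$, converts $h(\Omega_\A\wedge\Omega_\A)$ and $\tau(\Omega_\A)$ into the Chern--Weil forms $8\pi^2\ch_2(\B)$ and $2\pi t\,c_1(\B)$, and then invokes Theorem \ref{Kform}. Your explicit verification that $a_0=(-i\id_{\C^r},0)\in\ng$ is $K$-invariant with $\rho_*(a_0)=i\id_F$ is a detail the paper leaves implicit, but it is exactly the check needed to justify applying Theorem \ref{Kform} rather than the weaker Theorem \ref{KformGen}.
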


\begin{re} Let ${\cal L}_0$ be a holomorphic line bundle on $X$ of Chern class $l_0\in\NS(X)$, $\chi_0$  a Hermite-Einstein metric on ${\cal L}_0$, $a_0$ the corresponding Hermite-Einstein connection on the Hermitian line bundle $(L_0,\chi_0)$, and $\gamma_0$ the first Chern form of $a_0$. Then the moduli spaces
${\cal M}_t^*(E,E_0,A_0)$ and ${\cal M}^*_{t-2\pi\Lambda \gamma_0}(E\otimes L_0,E_0\otimes L_0,A_0\otimes a_0)$ can be identified via the map $[B,s]\mapsto [B\otimes a_0,s\otimes\id_{L_0}]$ and the  Kähler forms given by formula (\ref{Kformvortex}) correspond via this identification. 
\end{re}

\begin{proof} Let ${\cal C}^*$, $\tilde {\cal C}^*$ be the configuration spaces of irreducible pairs associated with the two data systems $(E,E_0)$, $(E\otimes L_0,E_0\otimes L_0)$ respectively, ${\cal B}^*$, $\tilde {\cal B}^*$ the corresponding  quotients,  $\E\to {\cal B}^*\times X$,  $\tilde\E\to\tilde{\cal B}^*\times X$  the corresponding universal bundles, and $\B$, $\tilde\B$ the corresponding universal connections.  Using the formula
$$i\Lambda F_{B\otimes a_0}=i\Lambda F_{B}+ 2\pi\Lambda\gamma_0
$$
we see that the map $j:{\cal C}^*\to\tilde{\cal C}^*$ given by $j(B,s)=(B\otimes a_0,s\otimes\id_{L_0})$ maps  the space of solutions of the equation $(V_t(E,E_0,A_0))$ bijectively  onto the space of solutions of  the equation $(V_{t-2\pi\Lambda \gamma_0}(E\otimes L_0,E_0\otimes L_0,A_0\otimes a_0))$. We have
$$([j]\times\id_X)^*(\tilde \E)=\E\otimes p_X^*(L_0)\ , \ ([j]\times\id_X)^*(\tilde \B)=\B\otimes p_X^*(a_0)\ ,
$$
$$([j]\times\id_X)^*(c_1(\tilde \B))=c_1( \B)+rp_X^*(\gamma_0)\ ,
$$
$$([j]\times\id_X)^*(2c_2(\tilde\B)-c_1^2(\tilde\B))=(2c_2(\B)-c_1^2(\B))- 2c_1(\B)\wedge p_X^*(\gamma_0)-r p_X^*(\gamma_0^2)\ .
$$
The statement follows now by direct computation.
\end{proof}

Let $\bar{\cal A}(E)$ be the space of semi-connections on $E$,  $\bar{\cal C}^s:=[\bar{\cal A}(E)\times A^0(\Hom(E,E_0))]^s$  the space of {\it simple pairs of type} $(E,{\cal E}_0)$ endowed with the natural action of the complex gauge group ${\cal G}^\C:=\Gamma(X,\GL(E))$, and $\bar{\cal B}^s$ the quotient 
$$\bar{\cal B}^s:=\qmod{\bar{\cal C}^s}{{\cal G}^\C}
$$
 (see \cite{OT2} p. 559 for details). After suitable Sobolev completions (which will again be omitted) this quotient becomes an infinite dimensional complex space. As in the real gauge theoretical framework we obtain a universal bundle 
 $$\mathscr{E}:=\qmod{\bar{\cal C}^s\times E}{{\cal G}^\C}
 $$
of rank $r$ on the product $\bar{\cal B}^s\times X$.   Fixing a holomorphic structure ${\cal E}_0$ on $E_0$ we obtain a finite dimensional complex subspace 
$${\cal M}^s={\cal M}^s(E,{\cal E}_0)\subset \bar{\cal B}^s\ ,$$
called the moduli space of simple holomorphic pairs of type $(E,{\cal E}_0)$ and the restriction of $\mathscr{E}$ to ${\cal M}^s\times X$ is holomorphic.

Choosing ${\cal E}_0$ to be the  holomorphic structure on $E_0$ defined by the semi-connection $\bar\partial_{A_0}$, we obtain  a map $KH:{\cal B}^*\to \bar{\cal B}^s$
induced by $(B,s)\mapsto ({\cal E}_{\bar\partial_B},s:{\cal E}_{\bar\partial_B}\to{\cal E}_0)$.

Note that one has an obvious isomorphism 
\begin{equation}\label{UnivBdls}
(KH\times\id_X)^*(\mathscr{E})\simeq\E 
\end{equation}
and an obvious inclusion $KH({\cal M}^*_t)\subset  {\cal M}^s$. The image of this inclusion is given by the Kobayashi-Hitchin correspondence for twisted holomorphic pairs (see Theorem 2.7 \cite{OT2}):
\begin{thry} \label{Mst}  The map $(B,s)\mapsto ({\cal E}_{\bar\partial_B},s:{\cal E}_{\bar\partial_B}\to{\cal E}_0)$ induces a real analytic isomorphism
$$KH:{\cal M}^*_t(E,E_0,A_0)\textmap{\simeq} {\cal M}^\st_{\tg} (E,{\cal E}_0)
$$
onto the moduli space of $\tg$-stable pairs  of type $(E,{\cal E}_0)$ with $\tg:= \frac{(n-1)!\Vol_g(X)}{2\pi} t$.
 \end{thry}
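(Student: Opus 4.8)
The plan is to recognize Theorem \ref{Mst} as the specialization, to the linear moduli problem of twisted holomorphic pairs, of the general universal Kobayashi-Hitchin correspondence $KH:{\cal M}^*\to{\cal M}^\st$ recalled in the Introduction, and to reduce it to the detailed analysis of \cite{OT2}. First I would make the gauge-theoretic/complex-geometric dictionary precise. The first two equations of $(V_t(E,E_0,A_0))$ say exactly that $\bar\partial_B$ is an integrable semiconnection, hence defines a holomorphic structure ${\cal E}_{\bar\partial_B}$ on $E$, and that $s$ is holomorphic as a morphism ${\cal E}_{\bar\partial_B}\to{\cal E}_0$; thus $(B,s)\mapsto({\cal E}_{\bar\partial_B},s)$ does land in the space of holomorphic pairs, and it is equivariant with respect to the inclusion ${\cal G}=\Gamma(X,\U(E))\hookrightarrow{\cal G}^\C=\Gamma(X,\GL(E))$. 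The third equation is a moment-map condition, and it is this equation that has to be matched with $\tg$-stability.

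Next I would pin down the parameter. Tracing the third equation gives $i\Lambda\tr(F_B)-\frac12\tr(s^*s)+tr=0$ with $r=\rk E$; integrating against $\vol_g$ and using $i\tr(F_B)=2\pi c_1(E)$ together with $\Lambda\alpha\cdot\vol_g=\alpha\wedge\frac{\omega_g^{n-1}}{(n-1)!}$ expresses $t$ in terms of $\deg_g(E)$, $\int_X|s|^2\vol_g$ and $\Vol_g(X)$. Comparing this relation with the slope inequality defining $\tg$-stability fixes the normalization $\tg=\frac{(n-1)!\Vol_g(X)}{2\pi}t$; this is a bookkeeping computation, once the conventions for degree and for the stability parameter are fixed as in \cite{OT2}.

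The analytic core is the equivalence, inside a fixed complex gauge orbit, between solvability of the third equation and $\tg$-polystability of the pair. The easy direction, that a solution forces (semi)stability, follows by testing the equation against the inclusion of a destabilizing subsheaf and integrating, a Chern-Weil/Weitzenb\"ock argument that converts the moment-map equation into the required slope inequality. The hard direction, that every $\tg$-stable pair admits a solution in its orbit, is the genuine obstacle: it requires an Uhlenbeck-Yau/Donaldson type existence theorem (a continuity method, or the minimization of a Donaldson-type functional on ${\cal G}^\C/{\cal G}$, with the delicate $C^0$-estimate and the exclusion of bubbling), exactly as established for oriented/twisted pairs in \cite{OT2} and in the universal setting of \cite{LT2}. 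Uniqueness of the solution up to the real gauge group ${\cal G}$, which yields injectivity of $KH$, comes from the convexity of the functional along geodesics in ${\cal G}^\C/{\cal G}$.

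Finally, restricting to $t$-vortex solutions on one side and to $\tg$-stable pairs on the other, the above gives a bijection onto ${\cal M}^\st_{\tg}(E,{\cal E}_0)$. That it is a \emph{real analytic} isomorphism of moduli spaces, and not merely a set-theoretic bijection, is inherited from the general correspondence: the deformation complexes on the two sides are identified, the local Kuranishi models match, and the implicit-function-theorem construction of local charts is real analytic. I would therefore present the statement as a direct application of Theorem 2.7 of \cite{OT2}, supplying explicitly only the parameter identification of the second paragraph; the main difficulty is entirely concentrated in the existence half of the correspondence quoted there.
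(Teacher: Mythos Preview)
Your proposal is correct and matches the paper's approach exactly: the paper does not prove Theorem~\ref{Mst} at all but simply quotes it as Theorem~2.7 of \cite{OT2}, which is precisely what you propose in your final paragraph. Your additional outline of the parameter identification and of the stability/existence dichotomy is more than the paper itself supplies here, but it is accurate and consistent with the argument in \cite{OT2} and the universal framework of \cite{LT2}.
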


In order to save on notations we will denote by the same symbol $\omega_t$ the Kähler form $\omega_{{\cal M}^*_t}$  defined above and  also its image   under the isomorphism $KH$.  \\  

Let   $G\to \Aut({\cal E}_0)$ be a morphism of Lie groups. Then $G$ acts naturally on the  space $\bar{\cal A}(E)\times A^0(\Hom(E,E_0))$ of pairs of type $(E,{\cal E}_0)$ leaving invariant the holomorphy, the simplicity,  and the $\tg$-stability conditions. Since this action commutes with the action of the complex gauge group ${\cal G}^\C$, it induces a $G$-action on $\bar{\cal B}^s$ leaving invariant the finite dimensional subspaces $ {\cal M}^s$ and ${\cal M}^\st_{\tg} (E,{\cal E}_0)$.

Our Proposition  \ref{KFV} combined with the Kobayashi-Hitchin  correspondence given by Theorem \ref{Mst}  and the isomorphism (\ref{UnivBdls}) has an important consequence: the  Kähler class $\omega_{t}$    has a   natural  lift   in equivariant cohomology.
\begin{thry} \label{Symm}  
For any morphism of Lie  groups $G\to \Aut({\cal E}_0)$ the class
$$ \left\{-4\pi^2\mathrm{ch}_2^G(\mathscr{E})  \cup  p_X^*[\omega_g^{n-1}]-2t\pi(n-1)!  c_1^G(\mathscr{E}) \cup   p_X^*[\vol_g]\right\}/[X] 
$$
is a lift of  the Kähler class $[\omega_{t}]$ to $H^2_G({\cal M}^\st_{\tg} (E,{\cal E}_0),\R)$. 
\end{thry}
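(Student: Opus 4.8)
The plan is to realise the universal bundle $\mathscr{E}$ as a $G$-equivariant bundle on ${\cal M}^\st_\tg(E,{\cal E}_0)\times X$, so that the equivariant classes $c_1^G(\mathscr{E})$ and $\mathrm{ch}_2^G(\mathscr{E})$ are defined, and then to check that the forgetful homomorphism $\mathfrak{f}\colon H^*_G(\,\cdot\,,\R)\to H^*(\,\cdot\,,\R)$ (restriction to a fibre of the Borel fibration) carries the proposed class to the ordinary class of Proposition \ref{KFV}. Since a class in $H^2_G({\cal M}^\st_\tg(E,{\cal E}_0),\R)$ lifts $[\omega_t]$ exactly when $\mathfrak{f}$ sends it to $[\omega_t]$, once the equivariant formalism is in place the statement reduces to functoriality of characteristic classes together with the computation already performed in Proposition \ref{KFV}.

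First I would build the $G$-equivariant structure. The morphism $G\to\Aut({\cal E}_0)$ makes $G$ act on $\bar{\cal C}^s$ by post-composition, $\phi\cdot(\bar\partial,s)=(\bar\partial,\phi\circ s)$, leaving the semi-connection and the bundle $E$ untouched; as recalled in the text this action preserves holomorphy, simplicity and $\tg$-stability and commutes with ${\cal G}^\C=\Gamma(X,\GL(E))$. Because $G$ acts trivially on the factor $E$ in $\mathscr{E}=(\bar{\cal C}^s\times E)/{\cal G}^\C$ and commutes with ${\cal G}^\C$, it descends to bundle automorphisms of $\mathscr{E}$ covering the induced $G$-action on $\bar{\cal B}^s\times X$, an action which is trivial on the $X$-factor. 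Restricting to ${\cal M}^\st_\tg(E,{\cal E}_0)\times X$ endows $\mathscr{E}$ with the structure of a $G$-equivariant holomorphic bundle, whence $c_1^G(\mathscr{E})$ and $\mathrm{ch}_2^G(\mathscr{E})$ are defined. The classes $p_X^*[\omega_g^{n-1}]$ and $p_X^*[\vol_g]$ are pulled back from $X$, on which $G$ acts trivially, so they are canonical equivariant lifts of the corresponding de Rham classes; cupping with them and slanting with the $G$-fixed fundamental class $[X]$ (equivariant fibre integration along the trivially acted factor $X$) yields the asserted class in $H^2_G({\cal M}^\st_\tg(E,{\cal E}_0),\R)$.

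Next I would apply $\mathfrak{f}$ and transport the result through the Kobayashi--Hitchin isomorphism. The forgetful map is a ring homomorphism, natural under pullback, satisfying $\mathfrak{f}(c_i^G(\mathscr{E}))=c_i(\mathscr{E})$; moreover it commutes with slanting by $[X]$, since $G$ acts trivially on $X$ and hence the Borel construction of ${\cal M}^\st_\tg(E,{\cal E}_0)\times X$ is the product of that of ${\cal M}^\st_\tg(E,{\cal E}_0)$ with $X$, so integration over $X$ is levelwise. Thus $\mathfrak{f}$ sends the proposed class to $\{-4\pi^2\mathrm{ch}_2(\mathscr{E})\cup p_X^*[\omega_g^{n-1}]-2t\pi(n-1)!\,c_1(\mathscr{E})\cup p_X^*[\vol_g]\}/[X]$. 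By Theorem \ref{Mst} the map $KH\colon{\cal M}^*_t\to{\cal M}^\st_\tg(E,{\cal E}_0)$ is an isomorphism, by (\ref{UnivBdls}) one has $(KH\times\id_X)^*\mathscr{E}\simeq\E$, and the slant product $/[X]$ coincides with the fibre integration $p_{{\cal M}^*_t,*}$. Pulling back along $KH\times\id_X$ therefore turns this class into $p_{{\cal M}^*_t,*}[-4\pi^2\mathrm{ch}_2(\E)\wedge p_X^*(\omega_g^{n-1})-2t\pi(n-1)!\,c_1(\E)\wedge p_X^*(\vol_g)]=[\omega_{{\cal M}^*_t}]$ by Proposition \ref{KFV}. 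Since $[\omega_t]$ on ${\cal M}^\st_\tg(E,{\cal E}_0)$ is by definition the $KH$-image of $[\omega_{{\cal M}^*_t}]$ and $KH^*$ is injective, this shows that $\mathfrak{f}$ of the equivariant class equals $[\omega_t]$, as required.

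The heart of the argument is the bookkeeping just sketched, which is entirely formal once the equivariant framework is available; accordingly the main obstacle is the set-up rather than the computation. Concretely one must justify, over the infinite-dimensional configuration spaces, that the descended $G$-action genuinely equips the \emph{finite-dimensional} universal bundle $\mathscr{E}$ on ${\cal M}^\st_\tg(E,{\cal E}_0)\times X$ with an equivariant structure, and that equivariant fibre integration over the trivially acted factor $X$ is well defined and natural with respect to $\mathfrak{f}$. For the Borel construction these naturality properties are standard, so the remaining content is precisely the identification of $\mathfrak{f}$ of the equivariant class with the Kähler class, which is supplied by Proposition \ref{KFV} and the isomorphism (\ref{UnivBdls}).
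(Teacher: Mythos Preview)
Your proposal is correct and follows essentially the same approach as the paper's proof: both establish that $\mathscr{E}$ is $G$-equivariant, use the triviality of the $G$-action on $X$ to obtain a commutative diagram relating equivariant and ordinary slant products with $[X]$, and then invoke the formula (\ref{Kformvortex}) of Proposition \ref{KFV} together with the isomorphism (\ref{UnivBdls}). The paper's own proof is considerably terser but the key ingredients are identical.
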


\begin{proof} Put ${\cal M}^\st_{\tg}:={\cal M}^\st_{\tg} (E,{\cal E}_0)$. Using the trivial action on the second factor the product ${\cal M}^\st_{\tg}\times X$ becomes a $G$-space, and one has  natural isomorphisms 
$$H^*_G({\cal M}^\st_{\tg}\times X,\R)\simeq H^*(EG\times_G({\cal M}^\st_{\tg}\times X),\R)\simeq H^*((EG\times_G{\cal M}^\st_{\tg})\times X),\R)$$
$$\simeq H^*_G({\cal M}^\st_{\tg},\R)\otimes H^*(X,\R)\ .$$
Therefore the slant product with the   fundamental class $[X]$ is well defined on both cohomology algebras $H^*({\cal M}^\st_{\tg}\times X,\R)$, $H^*_G({\cal M}^\st_{\tg}\times X,\R)$, and defines a  commutative diagram \vspace{-3mm}
\begin{diagram}[s=8mm,w=8mm,midshaft]
  H^j_G({\cal M}^\st_{\tg}\times X,\R) & \rTo^{} & H^j({\cal M}^\st_{\tg}\times X,\R)\ \ \ \\
\dTo<{/[X]} & & \dTo<{/[X]} \\
  H^{j-2n}_G({\cal M}^\st_{\tg},\R) & \rTo^{ } & H^{j-2n}({\cal M}^\st_{\tg},\R)   
\end{diagram}
for any $j\in\N$. Since the equivariant Chern classes  $c_k^G(\mathscr{E})$ are   lifts of the classes $c_k(\mathscr{E})$ to equivariant cohomology, and fiber integration $p_{{\cal M}^*_t,*}$ induces the morphism $/[X]$ in de Rham cohomology, the result follows  from formula (\ref{Kformvortex}).

\end{proof}

In \cite{OT2} we have also shown that, for sufficiently large $\tg$, the moduli space ${\cal M}^\st_{\tg} (E,{\cal E}_0)$ can be identified with the Quot space ${\cal Q}uot^E_{{\cal E}_0}$ of quotients of ${\cal E}_0$ with locally free kernel of ${\cal C}^\infty$-type $E$. 
\begin{re}\label{new-Rem}
The natural  identification ${\cal M}^\st_{\tg} (E,{\cal E}_0)={\cal Q}uot^E_{{\cal E}_0}$ (for $\tg\gg 0$) is equivariant with respect to the $\Aut({\cal E}_0)$-actions on the two spaces.
Via this identification the restriction of the universal bundle $\mathscr{E}$ to ${\cal Q}uot^E_{{\cal E}_0}\times X$  coincides (as an $\Aut({\cal E}_0)$-bundle) with the universal kernel associated with this Quot space.
\end{re}

 In order to save on notations we  denote the  restrictions of the  universal bundles $\mathscr{E}$ and $\E$ to ${\cal M}^\st_{\tg} (E,{\cal E}_0)\times X$ and ${\cal M}^*_t(E,E_0,A_0)\times X$ respectively by the same symbols. Using Theorem \ref{Mst} and the isomorphism (\ref{UnivBdls}) we obtain 
\begin{re} \label{universalK} Suppose that $t$ is sufficiently large such that the isomorphism $KH$ defines a real analytic isomorphism
$$KH:{\cal M}^*_t(E,E_0,A_0)\to {\cal Q}uot^E_{{\cal E}_0}\ .
$$
Via this isomorphism the underlying ${\cal C}^\infty$-bundle of the pull-pack of the universal kernel $\mathscr{E}$ on   $ {\cal Q}uot^E_{{\cal E}_0}\times X$ can be identified with the universal bundle $\E$ on ${\cal M}^*_t(E,E_0,A_0)\times X$.
\end{re}
\vspace{3mm}

In the case $r=1$ the identification ${\cal M}^\st_\tg(E,{\cal E}_0)={\cal Q}uot^E_{{\cal E}_0}$ holds for $\tg> -\frac{\deg_g(E)}{\rk(E)}$ \cite{OT2}, and we get a more explicit formula for the Kähler class $[\omega_{{\cal M}^*}]$.  Denote by ${\cal B}(E)$ the space of gauge classes of Hermitian connections on the Hermitian line bundle $E$. Recall that ${\cal B}(E)$ can be identified with the total space of a vector bundle over the torus of Yang-Mills connections on $E$ (which can be identified via the classical Kobayashi-Hitchin correspondence with $\Pic^{c_1(E)}(X)$). 

Therefore   we have a well defined isomorphism $\delta:H_1(X,\Z)/\Tors\to H^1({\cal B}(E),\Z)$, which can be regarded as an element  of  $H^1({\cal B}(E),\Z)\otimes H^1(X,\Z)$. Denote by $\nu$ the natural map ${\cal B}^*\to{\cal B}(E)$. The Künneth decomposition of $c_1(\E)$ is  
 \begin{equation}\label{cAbelian}
 c_1(\E)=-\gamma\otimes 1+(\nu\times\id_X)^*\delta+1\otimes c_1(E)\ ,
 \end{equation}
where $\gamma=-c_1(\resto{\E}{{\cal B}^*\times\{x_0\}})$  for a fixed point $x_0\in X$. Let $(h_i)$ be a basis of the free $\Z$-module $H_1(X,\Z)/\rm Tors$,  $(h^i)$ the dual basis of $H^1(X,\Z)$ and 
$$h_{ij}:=\langle  h^i\cup h^j\cup [\omega_g^{n-1}],[X]\rangle\ .$$
  Then
\begin{equation}\label{c2Abelian}[p_{{\cal B}^*}]_* (c_1^2(\E)\cup p_X^*([\omega_g^{n-1}])=-2\nu^*(\theta) - 2\deg_g(E)\gamma\ ,
\end{equation}
where  
$$\theta:=\sum_{i<j} h_{ij}\   \delta(h_i)\cup \delta(h_j)\ .$$
 With this notations we get
\begin{equation}\label{Perutz}[\omega_{{\cal M}^*_t}]=\resto{\left\{4\pi^2\big(\nu^*(\theta)+\deg_g(E)\gamma\big)+ 2t\pi(n-1)! \mathrm{Vol}_g(X)\gamma\right\}}{{\cal M}^*_t} \ .
\end{equation}
\def\st{\mathrm{st}}
In order to save on notations, from now on we agree to denote by the same symbols the restrictions of the classes $\theta$, $\gamma$ to $\Pic^{c_1(E)}(X)$ and ${\cal M}^*_t$ respectively. We also agree to use the same symbol $\theta$ for the pull-back of this class via $\nu$. With this convention we obtain the simple formula 
\begin{equation}\label{Perutz-new}\frac{1}{4\pi^2}[\omega_{{\cal M}^*_t}]= \theta+(\deg_g(E)+\tg)\gamma \ ,
\end{equation}
which specializes to a formula obtained by Perutz (see Theorem 3 \cite{P}) for $r_0=1$ and $n=1$.  

\subsection{Volumina of Abelian Quot spaces}

Let $X$  be compact complex manifold, let ${\cal E}_0$ be a locally free sheaf of rank $r_0$ on $X$,  and  let $E$ be a differentiable vector bundle of rank $r$. As above we denote by ${\cal Q}uot^{E}_{{\cal E}_0}$ the Quot space of quotients  $q:{\cal E}_0\to Q$  of ${\cal E}_0$ with locally free kernel ${\cal C}^\infty$-isomorphic to $E$. Since locally-freeness is an open condition in flat families it follows that ${\cal Q}uot^{E}_{{\cal E}_0}$ is an open subspace of the Douady space ${\cal Q}uot_{{\cal E}_0}$. This Quot space is characterized by obvious universal properties.

For $r=1$ the bundle $E$ is a differentiable line bundle, hence its differentiable isomorphism type is determined by its Chern class. Therefore in this case is convenient to fix 
 $m\in\NS(X)$ and to denote by ${\cal Q}uot^{m}_{{\cal E}_0}$ the Quot space of   quotients  $q:{\cal E}_0\to Q$  of ${\cal E}_0$ with locally free kernel of rank 1 and Chern class $m$.
 We will give an explicit description  of certain Quot spaces of this type. We begin with a simple flatness criterion:

\begin{lm}\label{flat}
Let $\pi:{\cal X}\to B$ be a morphism of complex spaces, and let ${\cal F}\textmap{j} {\cal E}$ be  a morphism of coherent sheaves on ${\cal X}$ such that ${\cal E}$ is flat over $B$. Then the following two conditions  are equivalent:
\begin{enumerate}[(i)]
\item For every $b\in B$ the restriction $j_b:{\cal F}_{X_b}\to {\cal E}_{X_b}$
 of $j$ to the fibre $X_b$ of $\pi$ is a sheaf monomorphism.
 \item $j$ is a monomorphism and the quotient ${\cal Q}:={\cal E}/j({\cal F})$ is flat over $B$.
\end{enumerate}
\end{lm}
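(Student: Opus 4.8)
The plan is to reduce the assertion to a statement in local commutative algebra and settle it with the local criterion of flatness. Each of the three conditions in play---injectivity of the fibre restriction $j_b$, injectivity of $j$, and flatness of ${\cal Q}$ over $B$---is stalk-local, so I would fix a point $x\in{\cal X}$, put $b:=\pi(x)$, and pass to the Noetherian analytic local rings $A:={\cal O}_{B,b}$ and $R:={\cal O}_{{\cal X},x}$, which are linked by the local homomorphism induced by $\pi$. Writing $M:={\cal F}_x$, $N:={\cal E}_x$ (finitely generated $R$-modules, with $N$ flat over $A$ by hypothesis) and $\kappa:=A/\mathfrak{m}_b=\C$, the stalk of $j_b$ at $x$ is exactly $j\otimes_A\kappa\colon M\otimes_A\kappa\to N\otimes_A\kappa$, the stalk of $j$ is $j\colon M\to N$, and the stalk of ${\cal Q}$ is $N/\im(j)$. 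It therefore suffices to prove, for each such $x$, the equivalence of (i)$_x$: $j\otimes_A\kappa$ is injective, and (ii)$_x$: $j$ is injective and $N/\im(j)$ is $A$-flat.

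The implication (ii)$_x\Rightarrow$(i)$_x$ is immediate: tensoring the short exact sequence $0\to M\xrightarrow{j}N\to{\cal Q}_x\to 0$ with $\kappa$ over $A$ yields the exact piece $\mathrm{Tor}_1^A({\cal Q}_x,\kappa)\to M\otimes_A\kappa\to N\otimes_A\kappa$, and flatness of ${\cal Q}_x$ kills the leftmost term.

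For the converse I would factor $j$ as $M\twoheadrightarrow I\hookrightarrow N$ with $I:=\im(j)$. Applying $-\otimes_A\kappa$ to the surjection $M\twoheadrightarrow I$ gives a surjection $M\otimes_A\kappa\twoheadrightarrow I\otimes_A\kappa$; since by (i)$_x$ the composite with $I\otimes_A\kappa\to N\otimes_A\kappa$ is injective, this surjection is an isomorphism and $I\otimes_A\kappa\hookrightarrow N\otimes_A\kappa$. Feeding the latter injectivity into the $\mathrm{Tor}$ sequence of $0\to I\to N\to{\cal Q}_x\to 0$---in which $\mathrm{Tor}_1^A(N,\kappa)=0$ because $N$ is flat---forces $\mathrm{Tor}_1^A({\cal Q}_x,\kappa)=0$, whence the \emph{local criterion of flatness} gives that ${\cal Q}_x$ is $A$-flat. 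The same sequence then yields $\mathrm{Tor}_1^A(I,\kappa)=0$, since its neighbours $\mathrm{Tor}_2^A({\cal Q}_x,\kappa)$ and $\mathrm{Tor}_1^A(N,\kappa)$ now both vanish. Finally, tensoring $0\to K\to M\to I\to 0$ with $\kappa$, where $K:=\ker(j)$, and using $\mathrm{Tor}_1^A(I,\kappa)=0$ together with the isomorphism $M\otimes_A\kappa\xrightarrow{\sim}I\otimes_A\kappa$, gives $K\otimes_A\kappa=0$; as $K$ is finitely generated over $R$ and $\mathfrak{m}_bK\subseteq\mathfrak{m}_xK$, Nakayama's lemma forces $K=0$, i.e.\ $j$ is injective.

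The main obstacle is precisely the appeal to the local criterion of flatness, i.e.\ the passage from the single vanishing $\mathrm{Tor}_1^A({\cal Q}_x,\kappa)=0$ to $A$-flatness of ${\cal Q}_x$. This is legitimate here because ${\cal O}_{B,b}$ and ${\cal O}_{{\cal X},x}$ are Noetherian, $\pi$ induces a local homomorphism between them, and ${\cal Q}_x$ is finitely generated over ${\cal O}_{{\cal X},x}$; everything else reduces to formal chases in the $\mathrm{Tor}$ long exact sequences together with one application of Nakayama.
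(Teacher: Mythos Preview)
Your proof is correct and follows essentially the same strategy as the paper: reduce to stalks and apply local commutative algebra. The paper simply invokes a reference (Corollary 1.5, p.~165 of B\u{a}nic\u{a}--St\u{a}n\u{a}\c{s}il\u{a}) for the equivalence at each stalk, whereas you unpack that black box explicitly via the local criterion of flatness, Tor sequences, and Nakayama---so your argument is a spelled-out version of the cited result.
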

 
\begin{proof} For $b\in B$ and $x\in X_b$ the natural morphism
$$j_{x}^b :{\cal F}_x\otimes_{{\cal O}_b}\C\to {\cal E}_x\otimes_{{\cal O}_b}\C \  
$$
induced by $j$ is just the morphism induced by $j_b$ between these stalks. Therefore 
 (i) is equivalent to the injectivity of $j_{x}^b$ for every  $b\in B$, $x\in X_b$.
Fix $b\in B$, $x\in X_b$ and note that ${\cal E}_x$ is a flat ${\cal O}_b$-module by assumption. By Corollary 1.5 p. 165 \cite{BS} $j_{x}^b$ is a monomorphism if and only if $j_x:{\cal F}_x\to {\cal E}_x$ is a monomorphism and $\mathrm{coker}(j_x)$ is a flat ${\cal O}_b$-module.

\end{proof}

\begin{re}\label{smooth-fibres} The hypothesis of Lemma \ref{flat} and condition (i) of this lemma are satisfied when $\pi$ is a flat morphism,  ${\cal F}$, ${\cal E}$ are locally free, and for any $b\in B$ the following two conditions hold:
\begin{enumerate}[(i)]
\item the fiber  $X_b$ is smooth, 
\item the restriction $j_b:{\cal F}_{X_b}\to {\cal E}_{X_b}$ is a generically injective morphism of vector bundles over $X_b$.
\end{enumerate}
Therefore, by Lemma \ref{flat}, in this case ${\cal Q}:={\cal E}/j({\cal F})$ is flat over $B$.
\end{re}
Note that the statement holds even if the total space ${\cal X}$ is not reduced.\\

Now we come back to our situation: let $X$ be a compact complex manifold.
\begin{dt} A pair $(m,{\cal E}_0)$  consisting of a class  $m\in\NS(X)$ and   a holomorphic bundle ${\cal E}_0$ of rank $r_0$ on $X$ is called acyclic if
$$h^i({\cal L}^\vee\otimes{\cal E}_0)=0 \ \ \forall i>0\ ,\ \forall [{\cal L}]\in\Pic^m(X) \ .$$
\end{dt}
{\ }\\
{\bf Examples:} 1. Suppose that $X$ is a smooth projective manifold, $H$ an ample divisor on $X$ and   $(m,{\cal E}_0)$ is a pair consisting of a class $m\in\NS(X)$ and a holomorphic vector bundle ${\cal E}_0$.  Using Serre's vanishing theorem and  the compactness of $\Pic^m(X)$ one shows easily that  for all sufficiently large $k\in\N$ the pair $(m,{\cal E}_0(kH))$ is acyclic.
\\ \\
2. Suppose now that $X$ is a curve, $m\in\Z$  and ${\cal E}_0$ is a polystable bundle on $X$ with  $\deg({\cal E}_0)>  r_0 m +2 r_0 (g(X)-1)$. Then for every $[{\cal L}]\in\Pic^m(X)$ the bundle ${\cal K}\otimes {\cal E}_0^\vee\otimes {\cal L}$ admits a Hermite-Einstein metric with negative Einstein constant, so that $H^0({\cal K}\otimes {\cal E}_0^\vee\otimes {\cal L})=0$. Using Serre duality we see that the pair $(m,{\cal E}_0)$ is acyclic.
\\ \\
   
Let $\mathscr{L}_m$ be a Poincaré line bundle on $\Pic^m(X)\times X$ normalized with respect to $x_0\in X$. Denote by $\pi$, $p$ the projections 
$$\pi:\Pic^m(X)\times X\to \Pic^m(X)\ ,\ p:\Pic^m(X)\times X\to  X\ .$$
 When the pair $(m,{\cal E}_0)$ is acyclic ${\cal Q}uot^{m}_{{\cal E}_0}$ can be identified with a projective bundle over $\Pic^m(X)$. The proof is based on the universal property of a Quot space. 
\begin{pr} \label{Quot-acyclic} Suppose the pair $(m,{\cal E}_0)$ is acyclic.
\begin{enumerate}[(i)]
\item The sheaf $\mathscr{V}:=R^0\pi_*(\mathscr{L}_m^\vee \otimes p^*({\cal E}_0))$ on $\Pic^m(X)$ is locally free.
\item $\ch(\mathscr{V})=\pi_*\big(\ch(\mathscr{L}_m^\vee \otimes p^*({\cal E}_0))\cup p^*(\td(X))\big)$.
\item Let $\nu:\P(\mathscr{V})\to\Pic^m(X)$, $p_X:\P(\mathscr{V})\times X\to X$, $\rho:\P(\mathscr{V})\times X\to  \P(\mathscr{V})$ be the natural projections, and let 
$$\mathscr{E}:=(\nu\times\id)^*(\mathscr{L}_m)\otimes  \rho^*({\cal O}_{\mathscr{V}}(-1))\ .$$
There exists a tautological monomorphism $j:\mathscr{E}\to p_X^*({\cal E}_0)$ such that the corresponding quotient $\mathscr{Q}$ of $p_X^*({\cal E}_0)$ is flat over $\P(\mathscr{V})$.
\item The obtained epimorphism  $u:p_X^*({\cal E}_0)\to \mathscr{Q}$ defines an isomorphism 
$$\P(\mathscr{V})\textmap{\simeq}{\cal Q}uot^m_{{\cal E}_0}\ .$$

\end{enumerate}

\end{pr}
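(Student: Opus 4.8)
The plan is to dispose of (i)--(iii) by standard base-change and Riemann--Roch arguments and to concentrate the real work on the modular statement (iv).

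For (i) I would apply the theorem on cohomology and base change to the proper smooth projection $\pi$ and the locally free sheaf $\mathscr{F}:=\mathscr{L}_m^\vee\otimes p^*({\cal E}_0)$. Its restriction to the fibre over $[{\cal L}]\in\Pic^m(X)$ is ${\cal L}^\vee\otimes{\cal E}_0$, whose higher cohomology vanishes by acyclicity; hence $R^i\pi_*\mathscr{F}=0$ for $i>0$, and Grauert's criterion shows that $\mathscr{V}=R^0\pi_*\mathscr{F}$ is locally free of rank $R=\chi({\cal L}^\vee\otimes{\cal E}_0)=h^0({\cal L}^\vee\otimes{\cal E}_0)$, with base change $\mathscr{V}\otimes\C([{\cal L}])\cong H^0({\cal L}^\vee\otimes{\cal E}_0)$. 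Since all $R^i\pi_*\mathscr{F}$ with $i>0$ vanish, one has $\pi_!\mathscr{F}=\mathscr{V}$ in $K$-theory, and (ii) becomes exactly Grothendieck--Riemann--Roch for $\pi$, whose relative tangent bundle is $p^*(T_X)$, so that $\td(T_\pi)=p^*(\td(X))$.

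For (iii) I would build $j$ from the tautological inclusion ${\cal O}_{\mathscr{V}}(-1)\hookrightarrow\nu^*\mathscr{V}$ on the bundle of lines $\mathbb{P}(\mathscr{V})$. Flat base change along $\nu$ identifies $\nu^*\mathscr{V}$ with $\rho_*\big((\nu\times\id)^*\mathscr{L}_m^\vee\otimes p_X^*({\cal E}_0)\big)$; composing the pulled-back tautological inclusion with the adjunction (evaluation) counit $\rho^*\rho_*(-)\to(-)$ and twisting by $(\nu\times\id)^*\mathscr{L}_m$ produces precisely $j:\mathscr{E}\to p_X^*({\cal E}_0)$. To see that $j$ is a monomorphism with $\mathbb{P}(\mathscr{V})$-flat cokernel, I would restrict to the fibre over a point $\xi\in\mathbb{P}(\mathscr{V})$ lying over $[{\cal L}]$ and corresponding to a line $\ell\subset H^0({\cal L}^\vee\otimes{\cal E}_0)=\Hom({\cal L},{\cal E}_0)$: there $\mathscr{E}$ restricts to ${\cal L}$ and $j$ restricts to a nonzero morphism ${\cal L}\to{\cal E}_0$ spanning $\ell$, which is generically injective since ${\cal L}$ has rank one and ${\cal E}_0$ is torsion free. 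With ${\cal F}=\mathscr{E}$ and ${\cal E}=p_X^*({\cal E}_0)$, Remark \ref{smooth-fibres} and Lemma \ref{flat} then give at once that $j$ is injective and that $\mathscr{Q}=\mathrm{coker}(j)$ is flat over $\mathbb{P}(\mathscr{V})$, with fibrewise kernel a line bundle of Chern class $m$, i.e. of type $E$.

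Finally, (iv) is where the substance lies. By (iii) the pair $(\mathscr{Q},u)$ is a flat family over $\mathbb{P}(\mathscr{V})$ of quotients of ${\cal E}_0$ with kernels of type $E$, so the universal property of the Quot space furnishes a classifying morphism $\Psi:\mathbb{P}(\mathscr{V})\to{\cal Q}uot^m_{{\cal E}_0}$. To invert it I would start from the universal kernel $\mathscr{K}\hookrightarrow p_X^*({\cal E}_0)$ on ${\cal Q}uot^m_{{\cal E}_0}\times X$, a flat family of line bundles of class $m$; the modular property of $\Pic^m(X)$, with its Poincaré bundle $\mathscr{L}_m$ normalized at $x_0$, yields a map $c:{\cal Q}uot^m_{{\cal E}_0}\to\Pic^m(X)$ and a line bundle $\mathscr{N}$ with $\mathscr{K}\cong(c\times\id)^*\mathscr{L}_m\otimes p^*\mathscr{N}$. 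Twisting the inclusion $\mathscr{K}\hookrightarrow p_X^*({\cal E}_0)$ by $(c\times\id)^*\mathscr{L}_m^\vee$ and pushing forward to ${\cal Q}uot^m_{{\cal E}_0}$ — using the base-change isomorphism of (i) to identify the pushforward of $(c\times\id)^*\mathscr{L}_m^\vee\otimes p_X^*({\cal E}_0)$ with $c^*\mathscr{V}$ — produces a bundle map $\mathscr{N}\to c^*\mathscr{V}$ which is fibrewise the nonzero line $\ell$, hence a genuine sub-line-bundle. By the universal property of the bundle of lines this is a morphism $\Phi:{\cal Q}uot^m_{{\cal E}_0}\to\mathbb{P}(\mathscr{V})$ over $\Pic^m(X)$, and the uniqueness clauses in the two universal properties force $\Psi\circ\Phi=\id$ and $\Phi\circ\Psi=\id$. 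The main obstacle is precisely this last step: keeping the auxiliary twist $\mathscr{N}$ and the normalization of $\mathscr{L}_m$ straight, checking that the pushed-forward map is fibrewise nonvanishing (a subbundle, not merely a subsheaf), and deducing the two identities cleanly from uniqueness rather than pointwise; everything else is routine base-change and Riemann--Roch bookkeeping.
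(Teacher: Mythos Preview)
Your proposal is correct and follows essentially the same approach as the paper: Grauert's theorem and GRR for (i)--(ii), the evaluation/adjunction morphism together with Lemma \ref{flat} and Remark \ref{smooth-fibres} for (iii), and the universal properties of $\Pic^m(X)$ and $\mathbb{P}(\mathscr{V})$ for (iv). The only cosmetic difference is in (iv): the paper verifies directly that $\mathbb{P}(\mathscr{V})$ satisfies the universal property of ${\cal Q}uot^m_{{\cal E}_0}$ for an arbitrary test space $Y$, whereas you construct the inverse map $\Phi$ from the universal family on ${\cal Q}uot^m_{{\cal E}_0}$ --- which is precisely the paper's construction of $\lambda$ specialized to $Y={\cal Q}uot^m_{{\cal E}_0}$.
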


\begin{proof}
Since the pair $(m,{\cal E}_0)$ is acyclic we have $h^0({\cal L}^\vee \otimes {\cal E}_0)=\chi({\cal L} ^\vee\otimes {\cal E}_0)$, which is a topological invariant of the pair $(m,{\cal E}_0)$, hence it is constant with respect to $[{\cal L}]\in\Pic^m(X)$.  The first statement follows from Grauert's locally freeness theorem. The second statement is a consequence of the Grothendieck-Riemann-Roch theorem for families taking into account that $R^i\pi_*(\mathscr{L}_m^\vee \otimes p^*({\cal E}_0))=0$ for $i>0$. For the third statement  note that the evaluation morphism
$$\mathrm{ev}: \mathscr{L}_m  \otimes \pi^*(\mathscr{V})\to p^*({\cal E}_0)
$$
on $\Pic^m(X)\times X$ can be regarded as an element of 
$$H^0(\mathscr{L}_m^\vee\otimes \pi^*(\mathscr{V})^\vee\otimes p^*({\cal E}_0))=H^0\left( (\nu\times\id_X)_* \big((\nu\times\id_X)^* (\mathscr{L}_m^\vee) \otimes{\cal O}_{\mathscr{V}}(1)\otimes p_X^*({\cal E}_0)\big)\right)  $$
$$ =H^0\big({\cal H}om\left(\big(\nu\times\id_X)^*(\mathscr{L}_m)\otimes{\cal O}_{\mathscr{V}}(-1), p_X^*({\cal E}_0)\big)\right),\ $$
hence it defines a morphism $(\nu\times\id_X)^*(\mathscr{L}_m)\otimes{\cal O}_{\mathscr{V}}(-1)\to  p_X^*({\cal E}_0)$ of sheaves on  $\P(\mathscr{V})\times X$. Using Remark \ref{smooth-fibres}, Lemma \ref{flat} it follows that the corresponding morphism 
$$ j:(\nu\times\id_X)^*(\mathscr{L}_m)\otimes{\cal O}_{\mathscr{V}}(-1)\to p_X^*({\cal E}_0)$$
is a sheaf monomorphism and the corresponding quotient $\mathscr{Q}$ is flat over $\P(\mathscr{V})$.

The fourth statement is a consequence  of the third. It suffices to prove that the quotient epimorphism $p_X^*({\cal E}_0)\to \mathscr{Q}$   satisfies the universal property of the tautological quotient over ${\cal Q}uot^m_{{\cal E}_0}$.

Let $Y$ be an arbitrary complex space and $q_X:Y\times X\to X$, $q_Y:Y\times X\to Y$ the projections on the two factors. Let $v:q_X^*({\cal E}_0)\to {\cal Q}$ be an epimorphism of coherent sheaves on $Y\times X$, such that ${\cal Q}$ is flat over $Y$ and the kernel ${\cal F}:=\ker(v)$ is fiberwise locally free of rank 1 and with Chern class $m$. Since ${\cal Q}$ is flat over $Y$ it follows that ${\cal F}$ is flat over $Y$ as well. Therefore ${\cal F}$ is a line bundle over $Y\times X$ and, using the universal property of the Picard group, there exists a morphism  $\alpha:Y\to\Pic^m(X)$, a line bundle ${\cal M}$ on $Y$ and a line bundle isomorphism
$$\beta:{\cal F}\to q_Y^*({\cal M})\otimes(\alpha\times\id_X)^*(\mathscr{L}_m)
$$
over $Y\times X$.  Since the base change property holds for $R^0\pi_*(\mathscr{L}_m^\vee \otimes p^*({\cal E}_0))$, we obtain   isomorphisms
$$\alpha^*(\mathscr{V})\simeq(q_Y)_*((\alpha\times\id)^*(\mathscr{L}_m^\vee\otimes p^*({\cal E}_0))\simeq(q_Y)_*((\alpha\times\id)^*(\mathscr{L}_m)^\vee\otimes q^*_X({\cal E}_0))\ .
$$
On the other hand, the monomorphism $\iota:{\cal F}\hookrightarrow q^*_X({\cal E}_0)$ defines a section $\sigma$ in ${\cal F}^\vee\otimes q^*_X({\cal E}_0)$ which is fiberwise non-trivial  by Lemma \ref{flat}. Using the isomorphism $\beta$ we obtain a fiberwise non-trivial section 
$$s\in H^0\left(q_Y^*({\cal M})^\vee\otimes(\alpha\times\id)^*(\mathscr{L}_m)^\vee\otimes q^*_X({\cal E}_0)\right)\ ,$$
 hence a nowhere vanishing  section $(q_Y)_*(s)$ in
 $$  (q_Y)_*(q_Y^*({\cal M})^\vee\otimes(\alpha\times\id)^*(\mathscr{L}_m)^\vee\otimes q^*_X({\cal E}_0))={\cal M}^\vee \otimes (q_Y)_* \left((\alpha\times\id)^*(\mathscr{L}_m)^\vee\otimes q^*_X({\cal E}_0) \right)$$
 $$= {\cal M}^\vee\otimes\alpha^*(\mathscr{V})\ .$$
   Therefore  $(q_Y)_*(s)$ identifies ${\cal M}$ with a line subbundle of $\alpha^*(\mathscr{V})$. This  line subbundle induces a morphism $\lambda:Y\to\P(\mathscr{V})$ with $\nu\circ\lambda=\alpha$. It is now easy  to see that the epimorphism $v$ can be identified with the pull-back of $u$ via $\lambda$, and that $\lambda$ is the unique morphism $Y\to \P(\mathscr{V})$ which induces such an identification.

\end{proof}

Let again $(h_i)$ be a basis of $H_1(X,\Z)/\Tors$, and let $\delta:H_1(X,\Z)/\Tors\to H^1(\Pic^m(X),\Z)$ be the natural isomorphism.
Put $\lambda_i:=\delta(h_i)\in H^1(\Pic^m(X),\Z)$.
Then one has $\delta=\sum_i \lambda_i\otimes h^i $. Now put $h^I:=h^{i_1}\cup\dots\cup h^{i_k}$, $\lambda_I:=\lambda_{i_1}\cup\dots\cup \lambda_{i_k}\in H^k(\Pic^m(X),\Z)$. Then a simple computation shows that 
$$ \delta^k=(-1)^{\frac{k(k-1)}{2}} \sum_{\substack{I\subset\{1,\dots b\} \\  |I|=k}}\lambda_I \otimes h^I\ .
$$

Therefore, for any class $\cg\in  H^{2n-k}(X,\Z)$ one can write
$$\pi_*(\delta^{k}\cup p^*(\cg))=(-1)^{\frac{k(k-1)}{2}}\pi_*\left\{ \sum_{\substack{I\subset\{1,\dots b\} \\  |I|=k}}\lambda_I \otimes (h^I\cup\cg)\right\}$$
$$=(-1)^{\frac{k(k-1)}{2}}
\sum_{\substack{I\subset\{1,\dots b\} \\  |I|=k}} \langle h^I\cup\cg,[X]\rangle \lambda_I\ .
$$
\def\Alt{\mathrm{Alt}}
In particular
$$\pi_*(\delta^2\cup p^*[\omega_g^{n-1}])=-\sum_{i,j}\langle h^i\cup h^j\cup[\omega_g^{n-1}],[X]\rangle=-2\theta\ .
$$
Identifying $H^*(\Pic^m(X),\Z)=\Alt^*(H^1(X,\Z),\Z)$  we have
$$\lambda_I(h^{j_1},\dots, h^{j_k})=\epsilon^J_I\ ,
$$
$$\pi_*(\delta^{k}\cup p^*(\cg))(h^{j_1},\dots, h^{j_k})=(-1)^{\frac{k(k-1)}{2}}
\sum_{\substack{I\subset\{1,\dots b\} \\  |I|=k}} \langle h^I\cup\cg,[X]\rangle \epsilon^J_I\ .
$$
Note that  
$$\sum_{\substack{I\subset\{1,\dots b\} \\  |I|=k}} h^I \epsilon^J_I = k!\  h^J\ .
$$
This proves:
\begin{re}\label{RR}
 For a class $\cg\in H^{2n-k}(X,\Z)$ define $\kg_\cg\in \Alt^k(H^1(X,\Z),\Z)$ by
$$\kg_\cg(x^1,\dots x^k):=\langle x_1\cup\dots \cup x_k\cup\cg,[X]\rangle\ .
$$
Using the canonical identification $\Alt^k(H^1(X,\Z),\Z)=H^k(\Pic^m(X),\Z)$ one has
$$\pi_*(\delta^{k}\cup p^*(\cg))=(-1)^{\frac{k(k-1)}{2}} k! \ \kg_\cg\ .
$$
\end{re}

Let $\omega_t$ be the Kähler form induced on ${\cal Q}uot^{m}_{{\cal E}_0}$ via the Kobayashi-Hitchin correspondence $KH$. Using Remark \ref{universalK} and formula (\ref{Perutz}) we obtain a formula for the  cohomology class $[\omega_t]$ in terms of the classes $\gamma$ and $\theta$ defined in section \ref{KF}.   

In our case, by Proposition \ref{Quot-acyclic} (iii) we get $\gamma=c_1({\cal O}_{\mathscr{V}}(1))$, hence
$$
[\omega_{t}]=4\pi^2\big(\nu^*(\theta)+\mg\gamma\big)+ 2t\pi(n-1)! \mathrm{Vol}_g(X)\gamma$$
$$
=4\pi^2\nu^*(\theta)+\big(4\pi^2\mg+2t\pi(n-1)! \mathrm{Vol}_g(X) \big)\gamma\ ,
$$
with
$$  \mg:=\langle m\cup[\omega_g]^{n-1},[X]\rangle=\deg_g(E)\ ,\  \theta=\sum_{i<j} h_{ij} \delta(h_i)\cup  \delta(h_i)\in H^2(\Pic^m(X))\ , $$
$$h_{ij}:=\langle h^i\cup h^j\cup [\omega^{n-1}],[X]\rangle\     .
$$

 Put 
 $$A:=4\pi^2 ,\ B_t:=4\pi^2\mg+2t\pi(n-1)! \mathrm{Vol}_g(X) ,\ R:=\rk(\mathscr{V})  ,\ q:=\frac{b_1(X)}{2} ,\ N:=q+R-1\ .
 $$
The volume $V_t$ of the Kähler manifold $({\cal Q}uot^{m}_{{\cal E}_0},\omega_t)$ is
$$V_t=\frac{1}{N!}\int_{\P(\mathscr{V})}[A\nu^*(\theta) +B_t \gamma]^{N}\ .
$$
 Projecting onto $\Pic^m(X)$ and using  formula (4.3) of \cite{ACGH} we get
\begin{equation}\label{V} V_t=\frac{1}{N!}\sum_{ R-1\leq k\leq N}  \left(\begin{matrix}N\\ k\end{matrix}\right) A^k B_t^{N-k} \bigg\langle   \theta^k \cup \nu_*(\gamma^{N-k}),[\Pic^m(X)]\bigg\rangle
$$
$$=\frac{1}{N!}\sum_{ R-1\leq k\leq N}  \left(\begin{matrix}N\\ k\end{matrix}\right) A^k B^{N-k}_t \bigg\langle   \theta^k \cup s_{q-k}(\mathscr{V}),[\Pic^m(X)]\bigg\rangle\ ,
\end{equation}
where $s_j(\mathscr{V})$ denotes the $j$-th Segre class of $\mathscr{V}$. The classes $\ch_i(\mathscr{V})$ can be computed using Proposition \ref{Quot-acyclic} (ii) and the result is
$$\ch_i(\mathscr{V})=\pi_*\left ( \frac{1}  {(2i)!} \delta^{2i}\cup  p^*\left(e^{-m}\cup\ch({\cal E}_0)\td(X) \right)^{(n-i)}  \right )\ .
$$

Now decompose 
$$\ch({\cal E}_0)\td(X)=\sum C_i 
$$
with $ C_i\in H^{2i}(X,\Q)$.  Then, using Remark \ref{RR} we obtain
$$\ch_i(\mathscr{V})=\frac{1}  {(2i)!} \sum_{s=0}^{n-i} \frac{(-1)^s}{s!}  \pi_*\left (  \delta^{2i} \cup p^*\left( m^s C_{n-i-s} \right)\right)= \sum_{s=0}^{n-i} \frac{(-1)^{i+s}}{s!} \kg_{m^s C_{n-i-s}}\ .
$$
 
Using formula (1.2) p. 156 in \cite{ACGH} we see that the Chern polynomial of $\mathscr{V}$ is given by 
$$1+\sum c_i(\mathscr{V}) =\exp\left({\sum_{i=1}^\infty \frac{(-1)^{i+1}}{i} \ch_i(\mathscr{V})  }\right)\ ,
$$
hence the  Segre polynomial is  
$$1+\sum s_i(\mathscr{V}) =\exp\left({\sum_{i=1}^\infty \frac{(-1)^{i}}{i} \ch_i(\mathscr{V})  }\right)=\exp\left({\sum_{i=1}^q \sum_{s=0}^{n-i} \frac{(-1)^s}{i s!} \kg_{m^s C_{n-i-s}}  }\right)\ .
$$

Suppose now that the basis $(h_i)$ of $H_1(X,\Z)/\Tors$ was chosen such that the image in $H^1(X,{\cal O})$ of the dual basis $(h^i)$ is compatible with the complex orientation.   With this choice the linear form $\langle\cdot ,[\Pic^m(X)]\rangle$ on $H^{2q}(\Pic^m(X),\Z)$ corresponds to the linear form
$$\phi\to\phi (h^1,\dots,h^{2q}) 
$$ 
on $\Alt^{2q}(H^1(X,\Z),\Z)$.  Using (\ref{V}) and noting $A^k B^{N-k}_t=(4\pi^2)^N(\deg(E)+\tg)^{N-k}$ we obtain
$$V_t=\frac{(4\pi^2)^N}{N!}\bigg\{\bigg(\sum_{k=R-1}^N  \left(\begin{matrix}N\\ k\end{matrix}\right)  (\deg_g(E)+\tg)^{N-k}    \theta^k \bigg)\wedge \ \ \ \ \ \ \ \ \ \ \ \ \ \ \ \ \ \ \ \ \ \ \ \ \ \ \ \ \ \ \ \ \ \ \ \ \ \ \ $$
$$\ \ \ \ \ \ \ \ \ \ \ \ \ \ \ \ \ \ \ \ \ \ \ \ \ \ \ \ \ \ \ \ \ \ \ \ \ \ \ \ \ \  \wedge \exp\bigg({\sum_{i=1}^q \sum_{s=0}^{n-i} \frac{(-1)^s}{i s!} \kg_{m^s C_{n-i-s}}  }\bigg)\bigg\}(h_1,\dots,h_{2q})\ .
$$

This shows that $V_t$ is a polynomial function in $t$ whose coefficients are determined by the cohomology classes $[\omega_g]$, $m$, $\ch_i({\cal E}_0)$, and $\td_i(X)$.







\subsection{Volumina of moduli spaces of twisted matrix divisors}

In this section we will compute the volume of a Quot space ${\cal Q}uot^E_{{\cal E}_0}$   in the special case when $r=r_0$  and the base $X$ is a complex curve of genus $\g$.   Since $X$ is a curve, ${\cal C}^\infty$ bundles of rank $r_0$ on $X$ are classified up to isomorphism by their degree, hence  the Quot space ${\cal Q}uot^E_{{\cal E}_0}$ is the space of coherent subsheaves ${\cal E}\hookrightarrow {\cal E}_0$ with $\rk({\cal E})=r_0$ and $\deg({\cal E})=\deg(E)$.  An element $({\cal E}\hookrightarrow {\cal E}_0)\in {\cal Q}uot^E_{{\cal E}_0}$ defines a sheaf monomorphism $\wedge^{r}({\cal E})\hookrightarrow \wedge^r({\cal E}_0)$, hence ${\cal Q}uot^E_{{\cal E}_0}=\emptyset$ for $\deg({\cal E}_0)<\deg(E)$ and ${\cal Q}uot^E_{{\cal E}_0}$ reduces to the singleton $\{\id_{{\cal E}_0}\}$ when $\deg({\cal E}_0)=\deg(E)$. We put $d:=\deg({\cal E}_0)-\deg(E)$ and we suppose that $d\geq 0$.
\vspace{3mm}
Recall the following well known result:

\begin{pr} \label{tangent}  Let $X$ be a complex curve, ${\cal E}_0$ (respectively $E$) a holomorphic (respectively differentiable) vector bundle of rank $r_0$  on $X$. Denote by  $\mathscr{E}$ and $\mathscr{Q}$  the universal kernel and the universal quotient  on ${\cal Q}uot^E_{{\cal E}_0}\times X$. The space ${\cal Q}uot^E_{{\cal E}_0}$ is   smooth of dimension $ rd$, and there is a canonical identification
\begin{equation}\label{tangentIso}{\cal T}_{{\cal Q}uot^E_{{\cal E}_0}}\textmap{\simeq} \rho_*(\mathscr{E}^\vee\otimes \mathscr{Q})\ ,
\end{equation}
where $\rho:{\cal Q}uot^E_{{\cal E}_0}\times X\to {\cal Q}uot^E_{{\cal E}_0}$ denotes the projection onto the first factor.
 \end{pr}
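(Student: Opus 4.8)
The plan is to deduce the statement from the standard tangent--obstruction theory of the Quot functor, the only genuinely one-dimensional input being the torsion nature of the universal quotient. First I would fix a point $[q:{\cal E}_0\to {\cal Q}]\in {\cal Q}uot^E_{{\cal E}_0}$ with kernel ${\cal E}:=\ker(q)$, giving the short exact sequence
$$0\to {\cal E}\to {\cal E}_0\to {\cal Q}\to 0\ .$$
Because $\rk({\cal E})=\rk({\cal E}_0)=r_0$, the quotient ${\cal Q}$ is a torsion sheaf, and its length equals $\deg({\cal E}_0)-\deg({\cal E})=\deg({\cal E}_0)-\deg(E)=d$. Grothendieck's deformation theory identifies the Zariski tangent space of ${\cal Q}uot^E_{{\cal E}_0}$ at $[q]$ with $\Hom_{{\cal O}_X}({\cal E},{\cal Q})$, and places the obstructions to smoothness in $\mathrm{Ext}^1_{{\cal O}_X}({\cal E},{\cal Q})$.

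The decisive step is the vanishing of this obstruction group. Since ${\cal E}$ is locally free one has $\mathrm{Ext}^i_{{\cal O}_X}({\cal E},{\cal Q})\cong H^i(X,{\cal E}^\vee\otimes {\cal Q})$ for every $i$, and ${\cal E}^\vee\otimes {\cal Q}$ is again a torsion sheaf, supported on the finite set $\mathrm{Supp}({\cal Q})$. A sheaf supported in dimension zero has no higher cohomology on a curve, so $\mathrm{Ext}^1_{{\cal O}_X}({\cal E},{\cal Q})=H^1(X,{\cal E}^\vee\otimes {\cal Q})=0$; this already shows that ${\cal Q}uot^E_{{\cal E}_0}$ is smooth at $[q]$. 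For the dimension I would then invoke Riemann--Roch: the vanishing of $H^1$ gives $\dim T_{[q]}{\cal Q}uot^E_{{\cal E}_0}=h^0({\cal E}^\vee\otimes {\cal Q})=\chi({\cal E}^\vee\otimes {\cal Q})$, and the Euler characteristic of the torsion sheaf ${\cal E}^\vee\otimes {\cal Q}$ equals its length $r_0\cdot d=rd$. Since this holds at every point, ${\cal Q}uot^E_{{\cal E}_0}$ is smooth of pure dimension $rd$.

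To upgrade this pointwise analysis to the canonical isomorphism (\ref{tangentIso}) of sheaves, I would apply cohomology and base change to the universal objects $\mathscr{E}$ and $\mathscr{Q}$ on ${\cal Q}uot^E_{{\cal E}_0}\times X$. The fibrewise vanishing of $H^1(X,{\cal E}^\vee\otimes{\cal Q})$ forces $R^1\rho_*(\mathscr{E}^\vee\otimes\mathscr{Q})=0$, whence by Grauert's theorem $\rho_*(\mathscr{E}^\vee\otimes\mathscr{Q})$ is locally free of rank $rd$ and its formation commutes with base change. The universal property of the Quot space then supplies a canonical map ${\cal T}_{{\cal Q}uot^E_{{\cal E}_0}}\to\rho_*(\mathscr{E}^\vee\otimes\mathscr{Q})$: a tangent vector is a point of ${\cal Q}uot^E_{{\cal E}_0}$ over the dual numbers, which the functorial description identifies with a relative homomorphism ${\cal E}\to{\cal Q}$, and this identification is compatible with base change. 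Both sheaves are locally free of the same rank $rd$, and the induced map is fibrewise the tangent--obstruction isomorphism, so it is an isomorphism.

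I expect the main obstacle to be precisely this last globalization: ensuring that the comparison map is defined at the level of sheaves and is base-change compatible, rather than merely a fibrewise bijection. The vanishing and the numerical count are routine once the torsion character of ${\cal Q}$, and hence of ${\cal E}^\vee\otimes{\cal Q}$, has been exploited.
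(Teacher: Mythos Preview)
Your argument is correct and is precisely the standard proof of this well-known fact. The paper itself does not prove the proposition: it is introduced with ``Recall the following well known result'' and no proof is given. Your sketch via Grothendieck's tangent--obstruction theory, the vanishing of $H^1$ from the torsion nature of ${\cal E}^\vee\otimes{\cal Q}$, the Riemann--Roch count $\chi({\cal E}^\vee\otimes{\cal Q})=r_0 d=rd$, and the globalization by cohomology and base change, is exactly what one would supply if asked to fill in the details.
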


\subsubsection{The Abelian case}\label{AbCase} With the notations and under the conditions above suppose that $r=r_0=1$. In this case the Quot space ${\cal Q}uot^E_{{\cal E}_0}$ can be identified with the symmetric power  $X^{(d)}$ and, via this identification,  the universal kernel $\mathscr{E}$ and the universal quotient $\mathscr{Q}$  on ${\cal Q}uot^E_{{\cal E}_0}\times X$ can be identified with ${\cal O}(-\Delta)\otimes p_X^*({\cal E}_0)$ and $p_X^*({\cal E}_0)\otimes {\cal O}_\Delta$ respectively. Here $\Delta$ is the tautological divisor of $X^{(d)}\times X$. By Proposition \ref{tangent} we obtain  a canonical isomorphism
\begin{equation}\label{TgAB}
{\cal T}_{{\cal Q}uot^E_{{\cal E}_0}}\textmap{\simeq} \rho_*({\cal O}(\Delta)_\Delta )\ .
\end{equation}

By formula (\ref{cAbelian}) the Künneth decomposition  of $c_1 (\mathscr{E})$ reads %
\begin{equation}\label{cE}
c_1(\mathscr{E})=-\gamma\otimes 1+\delta+\deg(E)\otimes\{X\}\ ,
\end{equation}
where $\{X\}\in H^2(X,\Z)$ stands for the cohomological fundamental class of $X$ and we  denote by the same symbol the pull-back of $\delta$ via the map 
$$\nu\times\id_X:{\cal Q}uot^E_{{\cal E}_0}\times X\to \Pic^{\deg(E)}(X)\times X\ .$$  
Furthermore,   formula (\ref{c2Abelian}) gives
\begin{equation}\label{c2E}
c_1^2(\mathscr{E})/[X]=-2\theta - 2\deg(E) \gamma\ ,
\end{equation}
where we use the same symbol for the pull-back of  $\theta\in H^2(\Pic^{\deg(E)}(X),\Z)$ via the morphism $\nu:{\cal Q}uot^E_{{\cal E}_0}\to \Pic^{\deg(E)}(X)$.

The numbers obtained by evaluating the classes $ \gamma^{d-j} \theta^j$ on the fundamental class of ${\cal Q}uot^E_{{\cal E}_0}$ can be computed explicitly, and the result is:

\begin{equation}\label{numbers} \langle \gamma^{d-j} \theta^j , [{\cal Q}uot^E_{{\cal E}_0}]\rangle =\left\{
 \begin{array}{ccc}
 \frac{\g!}{(\g-j)!}  &\rm for& 0\leq j\leq \g\\
 0  &\rm for& j > \g
 \end{array}
 \right.
\end{equation}
This is the classical Poincaré formula (see for instance \cite{ACGH} p. 343), but can also  be obtained as a special case of Theorem 3.8 in \cite{OT2} by interpreting these numbers as gauge theoretical Gromov-Witten invariants.
Using this formula  and our formula (\ref{Perutz-new}) for the Kähler class $[\omega_t]$ on Abelian moduli spaces we obtain for the volume of ${\cal Q}uot^E_{{\cal E}_0}$ the formula:
$$V_t({\cal Q}uot^E_{{\cal E}_0})=(4\pi^2)^d\sum_{i=1}^{\min(d,\g)}\left(\begin{matrix} \g\\ i\end{matrix}\right)\frac{1}{(d-i)!}(\deg(E)+\tg)^{d-i}
$$
Up to the normalization factor $\pi^2$ it  specializes to the Manton-Nasir formula \cite{MN}  for the volume of $X^{(d)}$ when ${\cal E}_0={\cal O}_X$ and $t=\frac{1}{2}$.
\\

Let now $E_i$, ${\cal L}_i$  ($i\in\{1,2\}$) be differentiable, respectively holomorphic line bundles on $X$, and let   ${\cal Q}_i:={\cal Q}uot^{E_i}_{{\cal L}_i}$, $\mathscr{E}_i$ and  $\mathscr{Q}_i$ be the associated, Quot spaces, universal kernels and universal quotients. Put $l_i:=\deg({\cal L}_i)$, $m_i:=\deg(E_i)$, $d_i:=\deg({\cal L}_i)-\deg(E_i)$.

Consider the product ${\cal Q}_{12}:={\cal Q}_1\times {\cal Q}_2$ and denote by $\rho_{12}:{\cal Q}_{12}\times X\to {\cal Q}_{12}$, $p_i:{\cal Q}_{12}\to {\cal Q}_i$, $q_i:=p_i\times\id_X:{\cal Q}_{12}\times X\to {\cal Q}_{i}\times X$ the natural projections. For later computations we will need the characteristic classes  of the  push-forward:
$${\cal N}_{12}:=(\rho_{12})_*\left\{q_1^*(\mathscr{E}_1)^\vee\otimes q_2^*(\mathscr{Q}_2)\right\}\ ,
$$
which is a locally free sheaf of rank $d_2$ on ${\cal Q}_{12}$. Since  the higher direct images of the right hand sheaf on ${\cal Q}_{12}$ vanish, we obtain by the Grothendieck-Riemann-Roch theorem
\begin{equation}\label{chN12}
\ch({\cal N}_{12})=(-\bar\g+l_2-m_1-\theta_1 ) e^{\gamma_1}+ (\bar \g+m_1-m_2  +(\theta_1+\theta_2+\sigma_{12})  )e^{\gamma_1-\gamma_2}\ ,
\end{equation}
where   $\theta_i$, $\gamma_i$ stand  for the pull-backs of the corresponding classes on ${\cal Q}_i$ and 
$$\sigma_{12}:=(\rho_{12})_*(q_1^*(\delta_1)\cup q_2^*(\delta_2))\ .$$
 Since ${\cal Q}_{12}$ has torsion-free integral cohomology,
formula (\ref{chN12}) implies
\begin{equation}\label{ctN12}
c_t({\cal N}_{12})=\frac{(1+t\gamma_1)^{-\bar\g+ l_2-m_1} \ e^{-\frac{t\theta_1}{1+t\gamma_1}}}{(1+t(\gamma_1-\gamma_2))^{-\bar \g- m_1+m_2}\ e^{-\frac{t(\theta_1+\theta_2+\sigma_{12})}{1+t(\gamma_1-\gamma_2)}}}\ .
\end{equation}

 \subsubsection{The Grothendieck embedding}
 
 We begin with a simple result which shows that the Kähler classes associated with any Grothendieck embedding of ${\cal Q}uot^E_{{\cal E}_0}$ in a projective space  coincides (up to a universal factor) with the Kähler class  induced  from the moduli space  ${\cal M}_t^*(E,E_A,A_0)$ via the Kobayashi-Hitchin correspondence   for a suitable choice of the parameter $t$. Therefore our computation will also give the degree of this Quot space with respect to the corresponding Grothendieck embedding.

 Fix $x_0\in X$, and let $n\in\N$  be sufficiently large so that $h^1({\cal E}_0(nx_0))=h^1({\cal E}(nx_0))=0$ for every ${\cal E}\subset {\cal E}_0$ with $\deg({\cal E})=m$. Put 
 $$V:=H^0({\cal E}_0(nx_0))\ ,\ s:= \deg(E)+r_0(n-\g+1) \ . $$
 We obtain a holomorphic map $\iota_n:{\cal Q}uot^E_{{\cal E}_0}\to \G r_s(V)$ given by
 $$\iota_n({\cal E}\subset{\cal E}_0):=H^0({\cal E}(nx_0))\subset V \ .
 $$  

The map $\iota_n$  is an embedding for sufficiently large $n\in\N$, hence composing with the Plücker embedding $Pl:\G r_s(V)\to\P(\wedge^s(V))$ we obtain a projective embedding 
$$j_n:=Pl\circ \iota_n: {\cal Q}uot^E_{{\cal E}_0}\to \P(\wedge^s(V))\ .$$
   Denote by ${\cal U}$ the tautological subbundle of $\G r_s(V)$. We have   obvious isomorphisms
$$Pl^*({\cal O}_{ \wedge^s(V)}(1))\simeq \det({\cal U})^\vee\ ,\ 
\iota_n^*({\cal U})\simeq \rho_*(\mathscr{E}(n x_0))\ .
$$
Here we used the notation $\mathscr{E}(n x_0):= \mathscr{E}\otimes p_X^*({\cal O}(n x_0))$. Therefore  
$$c_1(j_n^*({\cal O}_{\wedge^s(V)}(1)))=-c_1(\rho_*(\mathscr{E} (n x_0))\ .
$$

Denote by $\{X\}\in H^2(X,\Z)$ the cohomological fundamental class of $X$ and put $\bar \g:=\g-1$. Since $R^1\rho_*({\cal E} (n x_0))=0$, we obtain using the Grothendieck-Riemann-Roch theorem:
$$c_1(j_n^*({\cal O}_{\wedge^s(V)}(1)))=-c_1(\rho_!(\mathscr{E} (n x_0))=$$
$$-\rho_*\left\{\mathrm{ch}(\mathscr{E})p_X^*(e^{n\{X\}}\cup (1-\bar\g\{X\} ) \right\}^{(2)}=
-\rho_*\left\{\mathrm{ch}(\mathscr{E})p_X^* (1+(n-\bar \g)\{X\} ) \right\}^{(2)}
$$
$$= \rho_*\left(-\mathrm{ch}_2(\mathscr{E})-(n-\bar \g)c_1(\mathscr{E}) \cup p_X^*(\{X\})\right)
$$

On the other hand, by formula (\ref{Kformvortex})   the class of the Kähler metric $\omega_t$ on ${\cal Q}uot^E_{{\cal E}_0}$ induced from the moduli space  ${\cal M}_t(E,E_A,A_0)$ via the Kobayashi-Hitchin correspondence (see Remark \ref{universalK})  is 
 
$$ [\omega_t]=4\pi^2\rho_*\left[ -\mathrm{ch}_2(\mathscr{E}) -\frac{t\Vol_g(X)}{2\pi}    c_1(\mathscr{E}) \wedge   \{X\}\right] \ .
$$ 
Here $g$ is  a Kähler metric  on $X$.
This proves:
\begin{pr}\label{Gr}
The Kähler class $c_1(j_n^*({\cal O}_{ \wedge^s(V)}(1)))$ of the Grothendieck embedding $j_n: {\cal Q}uot^E_{{\cal E}_0}\to \P(\wedge^s(V))$ compares to the Kähler class $[\omega_t]$ induced from the moduli space  ${\cal M}_t(E,E_A,A_0)$ via the Kobayashi-Hitchin correspondence as follows:
\begin{equation}\label{Gr-formula}
c_1(j_n^*({\cal O}_{ \wedge^s(V)}(1)))=\frac{1}{4\pi^2} \left[\omega_{\frac{2\pi(n-\bar \g)}{\Vol_g(X)} }\right]\ .
\end{equation}

\end{pr}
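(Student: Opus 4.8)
The plan is to obtain the identity by directly comparing the two expressions for first Chern classes already computed above, both of which are fiber integrals $\rho_*$ of a degree-four polynomial in the Chern classes of the universal kernel $\mathscr{E}$; the proof then reduces to matching coefficients and solving a single linear equation for $t$.

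First I would recall the Grothendieck-side computation performed just above, namely
$$c_1(j_n^*({\cal O}_{\wedge^s(V)}(1)))=\rho_*\left(-\mathrm{ch}_2(\mathscr{E})-(n-\bar\g)\,c_1(\mathscr{E})\cup p_X^*(\{X\})\right),$$
which follows from Grothendieck--Riemann--Roch applied to $\rho_!(\mathscr{E}(nx_0))$, together with the identifications $Pl^*({\cal O}_{\wedge^s(V)}(1))\simeq\det({\cal U})^\vee$, $\iota_n^*({\cal U})\simeq\rho_*(\mathscr{E}(nx_0))$, and the vanishing $R^1\rho_*(\mathscr{E}(nx_0))=0$.

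Next I would recall the moduli-space formula established above from (\ref{Kformvortex}) via Remark \ref{universalK}, specialized to the curve $X$, where the volume form $\vol_g$ represents $\Vol_g(X)$ times the pullback of the fundamental class $\{X\}$:
$$[\omega_t]=4\pi^2\,\rho_*\left[-\mathrm{ch}_2(\mathscr{E})-\frac{t\Vol_g(X)}{2\pi}\,c_1(\mathscr{E})\cup p_X^*(\{X\})\right].$$
Both formulas express the relevant class as $\rho_*$ of $-\mathrm{ch}_2(\mathscr{E})$ plus a scalar multiple of $c_1(\mathscr{E})\cup p_X^*(\{X\})$, which is exactly what makes a coefficient comparison possible.

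Dividing the last display by $4\pi^2$, the $-\mathrm{ch}_2(\mathscr{E})$ contributions of the two classes coincide automatically, so they agree precisely when the coefficients of $c_1(\mathscr{E})\cup p_X^*(\{X\})$ match, i.e.\ when $\frac{t\Vol_g(X)}{2\pi}=n-\bar\g$. Solving this linear equation yields $t=\frac{2\pi(n-\bar\g)}{\Vol_g(X)}$, which is precisely the value appearing in the statement, establishing (\ref{Gr-formula}). The only point requiring care is not conceptual but the bookkeeping of the normalization factors $4\pi^2$, $2\pi$ and $\Vol_g(X)$ through both computations --- in particular the passage from $\vol_g$ to $\Vol_g(X)\,\{X\}$ in the curve case --- after which the coefficient comparison is immediate.
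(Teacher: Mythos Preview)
Your proof is correct and follows exactly the paper's approach: the paper computes both $c_1(j_n^*({\cal O}_{\wedge^s(V)}(1)))$ and $[\omega_t]$ as fiber integrals over $X$ of the same expression $-\mathrm{ch}_2(\mathscr{E})-\lambda\,c_1(\mathscr{E})\cup p_X^*(\{X\})$ (up to the $4\pi^2$ factor) and then equates the coefficients $\lambda$, leaving the final matching implicit with the words ``This proves''. You have simply spelled out that last step.
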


\subsubsection{Localization}

Suppose that ${\cal E}_0$ decomposes as
$${\cal E}_0=\oplus_{i=1}^r {\cal L}_i\ ,
$$
where ${\cal L}_i$ are line bundles on $X$. Putting $l_i:=\deg({\cal L}_i)$ we have 
$$l:=\sum_{i=1}^r l_i=\deg({\cal E}_0) \ .$$

Let  
$$I(d):=\{\underline{d}=(d_1,\dots,d_r)\in \N^r|\   \sum_{i=1}^r d_i=d\} $$
be   the set of weak length $r$ decompositions of $d$. For every $\underline{d}\in I(d)$ define
$$Q^{\underline{d}}:=\prod_{i=1}^r {\cal Q}uot^{l_i-d_i}_{{\cal L}_i} \ .
$$
Note that  we have an obvious embedding
$$j_{\underline{d}}: Q^{\underline{d}}\to {\cal Q}uot^E_{{\cal E}_0}
$$
defined by
$$j_{\underline{d}}({\cal E}_1,\dots {\cal E}_r):=\bigoplus_{i=1}^r {\cal E}_i
$$
for every system $({\cal E}_1,\dots {\cal E}_r)$ of  rank 1  subsheaves  ${\cal E}_i\subset {\cal L}_i$ with $\deg({\cal E}_i)= l_i-d_i$.
\begin{re}\label{sum}
For every ${\underline{d}}\in I(d)$ there is a canonical isomorphism
\begin{equation}\label{DecSum}\resto{\mathscr{E}}{Q^{\underline{d}}}\simeq \bigoplus_{i=1}^r (p_i^{{\underline{d}}}\times\id_X)^*(\mathscr{E}^{\underline{d}}_i)\ ,
\end{equation}
where $p_i^{\underline{d}}: Q^{\underline{d}}\to {\cal Q}uot^{l_i-d_i}_{{\cal L}_i}$  denotes the projection onto the $i$-th factor, and $\mathscr{E}^{\underline{d}}_i$ stands for the universal kernel on ${\cal Q}uot^{l_i-d_i}_{{\cal L}_i}\times X$.

\end{re}

Endow  now the moduli space ${\cal Q}uot^E_{{\cal E}_0}$ with the $\C^*$-action associated with the morphism $\C^*\to\Aut({\cal E}_0)$ given by $z\mapsto \bigoplus_{i=1}^r z^{w_i}\id_{{\cal L}_i}$ (see Remark \ref{Symm}). Note that for every $\underline{d}\in I(d)$ one has ${\cal Q}^{\underline{d}}\subset [{\cal Q}uot^E_{{\cal E}_0}]^{\C^*}$  and, endowing the universal kernel $\mathscr{E}^{\underline{d}}_i$ with the $\C^*$-action $z\mapsto z^{w_i}\id_{\mathscr{E}^{\underline{d}}_i}$,  the isomorphism (\ref{DecSum}) becomes an isomorphism of $\C^*$-bundles over the trivial $\C^*$-space ${\cal Q}^{\underline{d}}$.  We will need the equivariant  first Chern class of the right hand summands.
Via the standard isomorphism $H^*_{\C^*}({\cal Q}^{\underline{d}},\R)\simeq H^*({\cal Q}^{\underline{d}},\R)[u]$ we get by (\ref{cE}), (\ref{c2E}) formulae of the form:
\begin{equation}\label{cEeq}
c_1^{\C^*}(\mathscr{E}^{\underline{d}}_i)=-(\gamma^{\underline{d}}_i-w_iu)\otimes 1+ \delta^{\underline{d}}_i+(l_i-d_i)\otimes\{X\}\ ,
\end{equation}
\begin{equation}\label{c2Eeq}
c_1^{\C^*}(\mathscr{E}^{\underline{d}}_i)^2/[X]=-2\theta^{\underline{d}}_i - 2(l_i-d_i) (\gamma^{\underline{d}}_i-w_iu)\ ,
\end{equation}
where $\gamma^{\underline{d}}_i$, $\delta^{\underline{d}}_i$, $\Theta^{\underline{d}}_i$ are the classes associated with the Quot space ${\cal Q}uot^{l_i-d_i}_{{\cal L}_i}$ as explained in section \ref{AbCase}. On the other hand, by Theorem \ref{Symm} and Remarks \ref {new-Rem}, \ref{universalK} we know that the class
$$\Omega_t:=\left[ -\mathrm{ch}_2^{\C^*}(\mathscr{E}) -\frac{t\Vol_g(X)}{2\pi}    c_1^{\C^*}(\mathscr{E}) \cup   \{X\}\right] /[X]
$$
is a lift of $\frac{1}{4\pi^2}[\omega_t]$ in $H^2_{\C^*}({\cal Q}uot^E_{{\cal E}_0},\R)$.

Using Remark \ref{sum} and the additivity of the equivariant Chern character with respect to direct sums we obtain
\begin{equation}\label{NiceSum}\resto{\Omega_t}{Q^{\underline{d}}}=\sum_{i=1}^r (p_i^{\underline{d}})^*\left\{\left[ -\frac{1}{2} c_1^{\C^*}(\mathscr{E}^{\underline{d}}_i)^2 -\frac{t\Vol_g(X)}{2\pi}   c_1^{\C^*}(\mathscr{E}^{\underline{d}}_i) \cup   \{X\}\right]/[X]\right\} $$
$$=\sum_{i=1}^r  [(s_i^{\underline{d}}\gamma^{\underline{d}}_i+\theta^{\underline{d}}_i)-s_i^{\underline{d}}w_i u]\ ,
\end{equation}
where
$$s_i^{\underline{d}}:=t\frac{\Vol_g(X)}{2\pi} +(l_i-d_i)\ ,
$$
and on the right we have omitted the symbol $(p_i^{\underline{d}})^*$ in front of $(s_i\gamma^{\underline{d}}_i+\theta^{\underline{d}}_i)$ to save on notations.

We will also need the equivariant Euler class of the normal bundle ${\cal N}_{Q^{\underline{d}}}$   of  $Q^{\underline{d}}$ in ${\cal Q}uot^E_{{\cal E}_0}$.  Using the isomorphism (\ref{tangentIso}) together with cohomology and base change we obtain the  direct sum decompositions
\begin{equation}\label{decN}\resto{{\cal T}_{{\cal Q}uot^E_{{\cal E}_0}}}{Q^{\underline{d}}}=\bigoplus_{i,j} (p^{\underline{d}}_{ij})^* {\cal N}^{\underline{d}}_{ij}\ ,\ {\cal N}_{Q^{\underline{d}}}=\bigoplus_{i\ne j} (p^{\underline{d}}_{ij})^*{\cal N}^{\underline{d}}_{ij}\ .
\end{equation}
Here ${\cal N}^{\underline{d}}_{ij}$ denotes the rank $d_j$ bundle on the product ${\cal Q}uot^{l_i-d_i}_{{\cal L}_i}\times {\cal Q}uot^{l_j-d_j}_{{\cal L}_j}$ studied in section \ref{AbCase}, and $p^{\underline{d}}_{ij}$ denotes the projection of $Q^{\underline{d}}$ onto this product. These decompositions are $\C^*$-invariant   and   $\C^*$ acts with weight $w_j-w_i$ on the summand ${\cal N}^{\underline{d}}_{ij}$. Using (\ref{ctN12}) we obtain
$$e^{\C^*}({\cal N}^{\underline{d}}_{ij})=((w_j-w_i) u)^{d_j} c_{\frac{1}{(w_j-w_i) u}}({\cal N}_{ij})
$$
$$=((w_j-w_i) u)^{d_j} \frac{\left[1+\frac{1}{(w_j-w_i) u}\gamma_i\right]^{-\bar \g-l_i+d_i+ l_j} \ e^{-\frac{\theta_i}{(w_j-w_i) u+\gamma_i}}}
{\left[1+\frac{1}{(w_j-w_i) u}(\gamma_i-\gamma_j)\right]^{-\bar \g -l_i+d_i+l_j-d_j}\ 
e^{-\frac{\theta_i+\theta_j+\sigma_{ij}}{(w_j-w_i) u+(\gamma_i-\gamma_j)}}} 
$$
$$=   \frac{\left((w_j-w_i) u+ \gamma_i\right)^{-\bar \g-l_i+d_i+ l_j} \ e^{-\frac{\theta_i}{(w_j-w_i) u+\gamma_i}}}
{\left((w_j-w_i) u+(\gamma_i-\gamma_j)\right)^{-\bar \g -l_i+d_i+l_j-d_j}\ 
e^{-\frac{\theta_i+\theta_j+\sigma_{ij}}{(w_j-w_i) u+(\gamma_i-\gamma_j)}}}\ .
$$
By (\ref{decN}) this gives
$$e^{\C^*}({\cal N}_{Q^{\underline{d}}})=\prod_{i\ne j} \frac{\left((w_j-w_i) u+ \gamma_i\right)^{-\bar \g-l_i+d_i+ l_j} \ e^{-\frac{\theta_i}{(w_j-w_i) u+\gamma_i}}}
{\left((w_j-w_i) u+(\gamma_i-\gamma_j)\right)^{-\bar \g -l_i+d_i+l_j-d_j}e^{-\frac{\theta_i+\theta_j+\sigma_{ij}}{(w_j-w_i) u+(\gamma_i-\gamma_j)}}}$$
$$=(-1)^{\bar\g\left(\begin{smallmatrix}  r\\   2\end{smallmatrix}\right)+(r-1)(l-d)}
\frac{\prod_{i\ne j}\left((w_j-w_i) u+ \gamma_i\right)^{-\bar \g-l_i+d_i+ l_j} \ e^{-\frac{\theta_i}{(w_j-w_i) u+\gamma_i}}}
{\prod_{i<j}\left((w_j-w_i) u+(\gamma_i-\gamma_j)\right)^{-2\bar \g}}\ .
$$

From now on we suppose that the weights $w_i$ are pairwise distinct.  This implies 
$$\{{\cal Q}uot^E_{{\cal E}_0}\}^{\C^*}=\coprod_{\underline{d}\in I(d)} Q^{\underline{d}}\ ,
$$
so that we can use the integration formula (\cite{AB} (3.8)) to compute the volume $V_t$ of  ${\cal Q}uot^E_{{\cal E}_0}$ with respect to the Kähler form $\frac{1}{4\pi^2} \omega_t$:
 $$V_t=\frac{1}{(rd)!}\sum_{\underline{d}\in I(d)} \left\langle \left\{  \frac{(\resto{[\Omega_t]}{Q^{\underline{d}}})^{rd}}{ e^{\C^*} ({\cal N}_{Q^{\underline{d}}} )} \right\},\left[Q^{\underline{d}}\right]\right\rangle$$
Here  the expression $  \frac{(\resto{\Omega_t}{Q^{\underline{d}}})^{rd}}{ e^{\C^*} ({\cal N}_{Q^{\underline{d}}} )}$ is regarded as an element in the ring $H^*(Q^{\underline{d}},\R)[[u,u^{-1}]]$. The degree with respect to $u$ of the terms with coefficients in $H^{2d}(Q^{\underline{d}},\R)$ is 0, hence the formula above yields a real number.  

Using our formulae for $[\Omega_t]$ and $e^{\C^*} ({\cal N}_{Q^{\underline{d}}})$ we obtain the following formula for the volume $V_t$:
 \\

\begin{equation}\label{explicit}
V_t=(-1)^{\bar\g\left(\begin{smallmatrix}  r\\   2\end{smallmatrix}\right)+(r-1)(l-d)}\frac{1}{(rd)!}\sum_{\underline{d}\in I(d)}$$
$$\left\langle
\frac{\left( \displaystyle{\sum_{i=1}^r}[(  s^{\underline{d}}_i\gamma^{\underline{d}}_i+\theta^{\underline{d}}_i)-s^{\underline{d}}_iw_i u]\right)^{rd}\displaystyle{\prod_{i\ne j}}\left((w_j-w_i) u+ \gamma_i^{\underline{d}}\right)^{\bar \g+l_i-d_i- l_j} \ e^{\frac{\theta_i^{\underline{d}}}{(w_j-w_i) u+\gamma_i^{\underline{d}}}}}
{\displaystyle{\prod_{i<j}}\left((w_j-w_i) u+(\gamma_i^{\underline{d}}-\gamma_j^{\underline{d}})\right)^{2\bar \g}},\left[Q^{\underline{d}}\right]\right\rangle,
\end{equation}
where
$$s^{\underline{d}}_i=t\frac{\Vol_g(X)}{2\pi}+l_i-d_i\ . 
$$

Formula (\ref{explicit}) provides an algorithm which generates explicit formulae for $V_t$: \begin{itemize}
\item For $\underline{d}\in I(d)$, $w=(w_1,\dots,w_r)\in\C^r$ (with $w_i\ne w_j$ for $i\ne j$) and $u\in\C^*$ put
$$F^{\underline{d}}_{w,u}(x_1,y_1,\dots,x_r,y_r):=$$
$$\frac{\left( \displaystyle{\sum_{i=1}^r}[(  s^{\underline{d}}_ix_i+y_i)-s^{\underline{d}}_iw_i u]\right)^{rd}\displaystyle{\prod_{i\ne j}}\left((w_j-w_i) u+ x_i\right)^{\bar \g+l_i-d_i- l_j} \ e^{\frac{y_i}{(w_j-w_i) u+x_i}}}
{\displaystyle{\prod_{i<j}}\left((w_j-w_i) u+(x_i-x_j)\right)^{2\bar \g}}\ ,
$$
and write explicitly the multi-homogeneous part 
$$\sum_{\alpha_i+\beta_i=d_i}[F^{\underline{d}}_w]_{\alpha_1,\beta_1\dots,\alpha_r,\beta_r}x_1^{\alpha_1}y_1^{\beta_1}\dots x_r^{\alpha_r}y_r^{\beta_r}
$$
of multi-degree $(d_1,\dots, d_r)$ in the Taylor expansion  of $F^{\underline{d}}_{w,u}$ at 0 . This expression has degree 0 with respect to $u$, hence we omitted $u$ in the notation of the coefficients.
\item Write
$$V_t=\frac{1}{(rd)!}\sum_{\underline{d}\in I(d)} \sum_{\alpha_i+\beta_i=d_i}[F^{\underline{d}}_{w}]_{\alpha_1,\beta_1\dots,\alpha_r,\beta_r}\prod_{i=1}^r \g(\g-1)\dots(\g-\beta_i+1)\ .$$
This final result is independent of the (pairwise distinct)  weights $w_i$. One can use the freedom to choose  the weights $w_i$ in order to simplify the computations (see for instance \cite{MO2}).
\end{itemize}

Using this algorithm to produce a closed formula  for arbitrary $r$, $d$ and $l$ seems to be  difficult.  We have computed the following examples:
\\
\\
{\bf Examples:} Let $r=2$ and put $\mu:=\mu({\cal E}_0)=\frac{l}{2}$, $\tg=\frac{t\Vol_g(X)}{2\pi}$.
\begin{enumerate}
\item $d=1$:
$$V_t=\tg+\mu+\bar\g\ .$$
\item $d=2$, $l$ even:
$$ V_t=\frac{1}{4!}\bigg\{ 4(\tg+\mu+\bar\g)\big[3(\tg+\mu+\bar\g)-4 \big]-6\bar\g\bigg\}\ .
 $$
\end{enumerate}

For $t_n= \frac{2\pi(n-\bar \g)}{\Vol_g(X)}$  (or, equivalently, $\tg_n:=n-\bar\g$) one gets the degree of the image of the Grothendieck embedding 
$j_n: {\cal Q}uot^E_{{\cal E}_0}\to \P(\wedge^s(V))$.
\begin{enumerate}
\item $d=1$:
$$\deg(j_n({\cal Q}uot^E_{{\cal E}_0}))=2n+l\ .
$$
\item   $d=2$ $l$ even:
$$\deg(j_n({\cal Q}uot^E_{{\cal E}_0}))= (2n+l)[3(2n+l)-8]-6\bar\g\ .
$$
\end{enumerate}

{\ }
\vspace{10mm}  \\
{\small Christian Okonek: \\
Institut f\"ur Mathematik, Universit\"at Z\"urich,
Winterthurerstrasse 190, CH-8057 Z\"urich,\\
e-mail: okonek@math.uzh.ch
\\  \\
Andrei Teleman: \\
CMI,   Aix-Marseille Universit\'e,  LATP, 39  Rue F. Joliot-Curie,  13453
Marseille Cedex 13,   e-mail: teleman@cmi.univ-mrs.fr
}

\setcounter{section}{-1}

\end{document}